\let\Ssign=\S
\def\S{\Ssign{\hspace{1.25pt}}}
\let\origsection=\section \def\section{\@ifstar{\origsection*}{\mysection}} 
\def\mysection{\@startsection{section}{1}\z@{.7\linespacing\@plus\linespacing}{.5\linespacing}{\normalfont\scshape\centering\S}}
\renewcommand{\PrintDOI}[1]{\doi{#1}}
\numberwithin{equation}{section}
\numberwithin{figure}{section}
\def\rmlabel{\upshape({\itshape \roman*\,})}
\def\alabel{\upshape({\itshape \alph*\,})}
\def\Alabel{\upshape({\itshape \Alph*\,})}
\def\nlabel{\upshape({\itshape \arabic*\,})}
\def\tblue{\textrm{blue}}
\def\tred{\textrm{red}}
\newsavebox\vdegbox
\savebox\vdegbox{\tikz{
		\draw[black,fill=black] (90:1) circle (.35);
		\draw[black,line width=0.10cm] (210:1) circle (.30);
		\draw[black,line width=0.10cm] (330:1) circle (.30);
		\draw[opacity=0] (0:1.2) circle (0.1);
	}}
\newsavebox\vvbox
\savebox\vvbox{\tikz{
		\draw[black,line width=0.10cm] (90:1) circle (.30);
		\draw[black,fill=black] (210:1) circle (.35);
		\draw[black,fill=black] (330:1) circle (.35);
		\draw[opacity=0] (0:1.2) circle (0.1);
	}}
\newsavebox\pdegbox
\savebox\pdegbox{\tikz{
		\draw[black,line width=0.10cm] (90:1) circle (.30);
		\draw[black,fill=black] (210:1) circle (.35);
		\draw[black,fill=black] (330:1) circle (.35);
		\draw[black,line width=0.28cm ] (210:1) -- (330:1);
		\draw[opacity=0] (0:1.2) circle (0.1);
	}}
\newsavebox\vvvbox
\savebox\vvvbox{\tikz{
		\draw[black,fill=black] (90:1) circle (.35);
		\draw[black,fill=black] (210:1) circle (.35);
		\draw[black,fill=black] (330:1) circle (.35);
		\draw[opacity=0] (0:1.2) circle (0.1);
	}}
\newcommand{\vvv}{\mathord{\scaleobj{1.2}{\scalerel*{\usebox{\vvvbox}}{x}}}}
\newsavebox\evbox
\savebox\evbox{\tikz{
		\draw[black,fill=black] (90:1) circle (.35);
		\draw[black,fill=black] (210:1) circle (.35);
		\draw[black,fill=black] (330:1) circle (.35);
		\draw[black,line width=0.28cm ] (210:1) -- (330:1);
		\draw[opacity=0] (0:1.2) circle (0.1);
	}}
\newsavebox\eebox
\savebox\eebox{\tikz{
		\draw[black,fill=black] (90:1) circle (.35);
		\draw[black,fill=black] (210:1) circle (.35);
		\draw[black,fill=black] (330:1) circle (.35);
		\draw[black,line width=0.28cm ] (90:1) -- (330:1);
		\draw[black,line width=0.28cm ] (90:1) -- (210:1);
		\draw[opacity=0] (0:1.2) circle (0.1);
	}}
\newcommand{\ee}{\mathord{\scaleobj{1.2}{\scalerel*{\usebox{\eebox}}{x}}}}
\newcommand{\piee}{\pi_{\ee}}
\DeclareRobustCommand{\robustee}{\mathord{\scaleobj{1.2}{\scalerel*{\usebox{\eebox}}{x}}}}
\newsavebox\eeebox
\savebox\eeebox{\tikz{
		\draw[black,fill=black] (90:1) circle (.35);
		\draw[black,fill=black] (210:1) circle (.35);
		\draw[black,fill=black] (330:1) circle (.35);
		\draw[black,line width=0.28cm ] (90:1) -- (330:1);
		\draw[black,line width=0.28cm ] (90:1) -- (210:1);
		\draw[black,line width=0.28cm ] (210:1) -- (330:1);
		\draw[opacity=0] (0:1.2) circle (0.1);
	}}
\theoremstyle{plain}
\newtheorem{thm}{Theorem}[section]
\newtheorem{prop}[thm]{Proposition}
\newtheorem{clm}[thm]{Claim}
\newtheorem{prob}[thm]{Problem}
\newtheorem{cor}[thm]{Corollary}
\newtheorem{lemma}[thm]{Lemma}
\theoremstyle{definition}
\newtheorem{dfn}[thm]{Definition}
\newtheorem{definition}[thm]{Definition}
\newtheorem{exmp}[thm]{Example}
\let\eps=\varepsilon
\let\theta=\vartheta
\let\rho=\varrho
\let\phi=\varphi
\let\vsigma=\varsigma
\let\Ups=\varUpsilon
\def\NN{\mathds N}
\def\ZZ{\mathds Z} 
\def\PP{\mathds P}
\def\RR{\mathds R}
\def\cA{\mathcal A}
\def\cP{\mathcal P}
\def\cQ{{\mathcal Q}}
\def\cR{\mathcal R}
\def\cS{{\mathcal S}}
\def\cT{{\mathcal T}}
\def\cK{{\mathcal K}}
\def\cU{{\mathcal U}}
\def\cV{{\mathcal V}}
\def\cX{{\mathcal X}}
\def\fA{\mathfrak{A}}
\def\fB{\mathfrak{B}}
\def\fH{\mathfrak{H}}
\def\fR{\mathfrak{R}}
\def\fx{\mathfrak{x}}
\def\ccB{{\mathscr{B}}}
\def\ccC{{\mathscr{C}}}
\def\ccD{{\mathscr{D}}}
\def\ccL{{\mathscr{L}}}
\def\ccM{{\mathscr{M}}}
\def\ccR{\mathscr{R}}
\let\Red=\fR
\let\Blue=\fB
\def\Js{J_\star}
\def\Ks{K_\star}
\def\Ls{L_\star}
\def\cAh{\cA_h}
\def\cPb{\cP_\bullet}
\def\Pb{P_\bullet}
\DeclareMathOperator{\ex}{ex}
\let\polishlcross=\l
\def\l{\ifmmode\ell\else\polishlcross\fi}
\def\moverlay{\mathpalette\mov@rlay}
\def\mov@rlay#1#2{\leavevmode\vtop{   \baselineskip\z@skip \lineskiplimit-\maxdimen
   \ialign{\hfil$\m@th#1##$\hfil\cr#2\crcr}}}
\newcommand{\charfusion}[3][\mathord]{
    #1{\ifx#1\mathop\vphantom{#2}\fi
        \mathpalette\mov@rlay{#2\cr#3}
      }
    \ifx#1\mathop\expandafter\displaylimits\fi}
\newcommand{\dcup}{\charfusion[\mathbin]{\cup}{\cdot}}
\newcommand{\bigdcup}{\charfusion[\mathop]{\bigcup}{\cdot}}
\def\tand{\ \text{and}\ }
\def\qand{\quad\text{and}\quad}
\def\qqand{\qquad\text{and}\qquad}
\newcommand{\vrhup}[1]{\scaleobj{0.6}{\scalerel*{\rightharpoonup}{#1}}}
\newcommand{\nrhup}{\mathord{\scaleobj{0.6}{\scalerel*{\rightharpoonup}{x}}}}
\newcommand{\wrhup}{\scaleobj{0.6}{\scalerel*{\rightharpoonup}{W}}}
\def\vseq#1{\ThisStyle{  \mathord{\vbox{\offinterlineskip\ialign{    \hfil##\hfil\cr
    $\SavedStyle{}_{\smash{\vrhup#1}}$\cr
    \noalign{\kern-0.7\scriptspace}
    $\SavedStyle#1$\cr}}}}}
\def\seq#1{\ThisStyle{  \mathord{\vbox{\offinterlineskip\ialign{    \hfil##\hfil\cr
    $\SavedStyle{}_{\smash{\nrhup}}$\cr
    \noalign{\kern-0.5\scriptspace}
    $\SavedStyle#1$\cr}}}}}
\def\wseq#1{\ThisStyle{  \mathord{\vbox{\offinterlineskip\ialign{    \hfil##\hfil\cr
    $\SavedStyle{}_{\smash{\wrhup#1}}$\cr
    \noalign{\kern-0.7\scriptspace}
    $\SavedStyle#1$\cr}}}}}
\let\vec=\seq
\def\tinyskip{\vspace{2pt plus 0.7pt minus 0.7pt}}
\let\setminus=\smallsetminus
\let\emptyset=\varnothing
\let\lra=\longrightarrow
\let\to=\lra
\newcommand{\pushright}[1]{\ifmeasuring@#1\else\omit\hfill$\displaystyle#1$\fi\ignorespaces}
\newcommand{\pushleft}[1]{\ifmeasuring@#1\else\omit$\displaystyle#1$\hfill\fi\ignorespaces}
\begin{document}
\title[Tur\'an density of $5$-cliques in  hypergraphs with quasirandom links]{
Tur\'an density of cliques of order five in $3$-uniform hypergraphs with quasirandom links}

\author[S. Berger]{S\"oren Berger}

\author[S. Piga]{Sim\'on Piga}

\author[Chr. Reiher]{Christian Reiher}
\address{Fachbereich Mathematik, Universit\"at Hamburg, Hamburg, Germany}
\email{\!\!\{soeren.berger\!,simon.piga\!,christian.reiher\!,mathias.schacht\}@uni-hamburg.de}

\author[V. R\"{o}dl]{Vojt\v{e}ch R\"{o}dl}
\address{Department of Mathematics, Emory University, Atlanta, USA}
 \email{vrodl@emory.edu}

\author[M. Schacht]{Mathias Schacht}

\thanks{The first and the fifth author are supported by ERC Consolidator Grant PEPCo 724903.
The second author is supported by ANID/CONICYT Acuerdo Bilateral DAAD/62170017 
through a Ph.D.\ Scholarship. The fourth author is supported by NSF grant DMS 1764385}

\keywords{Hypergraphs, extremal graph theory, Tur\'an's problem, quasirandomness}
\subjclass[2020]{05C35 (primary), 05C65, 05C80 (secondary)}

\begin{abstract}
	We show that $3$-uniform hypergraphs with the property that all vertices have a quasirandom 
	link graph with density bigger than $1/3$ contain a clique on five vertices. This result is 
	asymptotically best possible.
\end{abstract}

\maketitle

\section{Introduction}
\label{sec:introduction}
We study extremal problems for~$3$-uniform hypergraphs and here, unless stated otherwise, a hypergraph will  always be~$3$-uniform. Recall that given an integer $n$ and a hypergraph~$F$ the extremal number~$\ex(n,F)$ 
is the maximum number of hyperedges that an~$n$-vertex hypergraph can have without containing a copy of~$F$.
It is well known that the sequence $\ex(n,F)/\binom{n}{3}$ converges and the limit 
defines the \emph{Tur\'an density $\pi(F)$}.
Determining~$\pi(F)$ is a central open problem in extremal combinatorics. In fact, even the case 
when $F$ is a clique on four vertices is still unresolved and known as the $5/9$-conjecture of Tur\'an.  

Erd\H os and S\'os~\cite{ErSo82} suggested a variation restricting the problem only to 
those $F$-free hypergraphs that are \emph{uniformly dense} among large sets of vertices.
More precisely, given a hypergraph~$F$, Erd\H os and S\'os asked for the supremum $d\in [0,1]$ such that 
there exist arbitrarily large~$F$-free hypergraphs~$H=(V,E)$ for which every linear sized subset of the vertices induces a hypergraph of density at least~$d$.
Extremal results for uniformly dense hypergraphs in that context were studied in~\cites{BCKMM,GKL,GKV16,MR06,RRS-a,nullpaper}.
For hypergraphs there are several other notions of ``uniform density'' that 
are closely related to the theory of quasirandom hypergraphs  (see, e.g.,~\cites{ACHPS18,Towsner})
and corresponding extremal results were studied in~\cites{Christiansurvey,cherry,RRS-Mantel,RRS-b}. Here,
we shall focus on the following notion.
\begin{dfn}\label{def:ee}\sl
	For a hypergraph $H=(V,E)$ and reals $d\in[0,1]$, $\eta>0$, 
	we say that $H$ is \emph{$(\eta, d, \ee)$-dense} if for all $P$, $Q\subseteq V \times V$ we have 
		\begin{equation}\label{eq:eedef}
			e_{\ee}(P,Q)
			=
			\Big|\Big\{\big((x,y),(y,z)\big)\in\cK_{\ee}(P,Q)\colon \{x,y,z\}\in E\Big\}\Big|
			\geq
			d\,\big|\cK_{\ee}(P,Q)\big|-\eta |V|^3,
		\end{equation}
		where $\cK_{\ee}(P,Q)=\big\{\big((x,y),(y',z)\big)\in P \times Q\colon y=y'\big\}$.
\end{dfn}
For a fixed hypergraph~$F$, we define the corresponding Tur\'an density
\begin{multline}\label{eq:pieedef}
	\pi_{\ee}(F) 
	= 
	\sup\{d\in[0,1] \colon \text{for every $\eta>0$ and $n\in\NN$ there exists an $F$-free,}\\
		\text{$(\eta, d, \ee)$-dense hypergraph with at least $n$ vertices}\}\,.
\end{multline}
In~\cite{cherry} the last three authors obtained a general upper bound for~$\pi_{\ee}(K^{(3)}_{\l})$, 
which turned out to be best possible for all $\l\leq 16$ except for $\l=5$, $9$, and $10$. 

\begin{thm}
\label{thm:K2r}
	For every integer $t\geq 2$ we have
	\begin{align*}
	    \pi_{\ee}(K_{2^t}^{(3)})&\leq\frac{t-2}{t-1}\,.
		\intertext{Moreover, we have \pushQED{\qed}} 
			0
		&=
		\pi_{\ee}(K_4^{(3)})\,,\\
		\tfrac{1}{3}\leq \pi_{\ee}(K_5^{(3)})\leq\tfrac{1}{2}
		&=
		\pi_{\ee}(K_6^{(3)})=\dots=\pi_{\ee}(K_8^{(3)})\,,\\
		\qand\tfrac{1}{2}\leq \pi_{\ee}(K_9^{(3)})\leq \pi_{\ee}(K_{10}^{(3)})\leq\tfrac{2}{3}
		&=
		\pi_{\ee}(K_{11}^{(3)})=\dots=\pi_{\ee}(K_{16}^{(3)})\,.
		\pushQED{\qed}\qedhere\popQED
	\end{align*}
\end{thm}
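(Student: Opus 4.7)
The theorem has two parts: the general upper bound $\pi_\ee(K_{2^t}^{(3)}) \leq \frac{t-2}{t-1}$ for all $t \geq 2$, and a list of numerical consequences. The plan is to prove the upper bound by induction on $t$, and to obtain the numerical statements by combining the general bound with the monotonicity $\pi_\ee(K_r^{(3)}) \leq \pi_\ee(K_{r+1}^{(3)})$ and the lower bounds of $1/3$ for $K_5^{(3)}$ and $1/2$ for $K_9^{(3)}$, which come from explicit iterated quasirandom blow-up constructions that are standard in this line of work.

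The base case $t=2$ asserts $\pi_\ee(K_4^{(3)}) = 0$, a Mantel-type statement (as in \cite{RRS-Mantel}): any $(\eta,d,\ee)$-dense hypergraph with $d > 0$ and $\eta$ sufficiently small already contains a $K_4^{(3)}$. Starting from a single hyperedge $\{x,y,z\}$ (guaranteed by the positive density), the $\ee$-density condition is applied to the pair sets $P = \{x\} \times L(y)$ and $Q = L(y) \times \{z\}$, where $L(y)$ is the link of $y$; this forces the existence of a vertex $w \in L(y)$ with $\{x,w,z\} \in E$, completing a $K_4^{(3)}$ on $\{x,y,z,w\}$.

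For the inductive step, take $H=(V,E)$ to be $(\eta,d,\ee)$-dense with $d > \tfrac{t-1}{t}$. The aim is to build $K_{2^{t+1}}^{(3)}$ as two vertex-disjoint copies of $K_{2^t}^{(3)}$ on sets $A$, $B$, connected by all triples meeting both sets. The natural strategy is first to locate $A$ by applying the inductive hypothesis to a derived sub-hypergraph (for instance, the common-link structure of a well-chosen starting edge) whose effective density, by an averaging argument on the $\ee$-density of $H$, exceeds $\tfrac{t-2}{t-1}$; then to use the $\ee$-density once more, now with pair sets encoding that a new vertex must join every pair in $A$, and apply induction a second time on the further restricted structure to produce $B$. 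The arithmetic $\tfrac{t-1}{t} \mapsto \tfrac{t-2}{t-1}$ is exactly calibrated for this two-step density loss.

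The main obstacle will be the quantitative passage from the $\ee$-density of $H$ to the $\ee$-density of the restricted derived sub-hypergraphs on which induction is invoked: one needs a counting (or weak regularity) lemma for cherry-density that tracks how pair sets $(P,Q)$ transform under link restriction and common-neighbourhood passage, and shows that the parameter $\eta$ degrades only mildly. Once this machinery is in place, the induction closes and the listed equalities such as $\tfrac{1}{2} = \pi_\ee(K_6^{(3)}) = \dots = \pi_\ee(K_8^{(3)})$ and $\tfrac{2}{3} = \pi_\ee(K_{11}^{(3)}) = \dots = \pi_\ee(K_{16}^{(3)})$ are obtained by sandwiching between the general upper bound and the quoted lower bounds.
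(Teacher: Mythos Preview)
The paper does not prove Theorem~\ref{thm:K2r}; it is quoted from~\cite{cherry} (note the \texttt{\textbackslash qed} terminating the statement and the sentence immediately preceding it). So there is no proof here to compare against, but your sketch has genuine problems in both the base case and the inductive step.

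For the base case you apply the $\ee$-density condition to $P=\{x\}\times L(y)$ and $Q=L(y)\times\{z\}$. Even granting a sensible interpretation of $L(y)$ as a vertex set, one has $|\cK_{\ee}(P,Q)|\le n$, so the inequality $e_{\ee}(P,Q)\ge d\,|\cK_{\ee}(P,Q)|-\eta n^3$ is vacuous for any fixed $\eta>0$. The $\ee$-density hypothesis only bites when $|\cK_{\ee}(P,Q)|$ is cubic in $n$; a single starting edge gives you nothing. The actual argument that $\pi_{\ee}(K_4^{(3)})=0$ (in~\cite{RRS-b} and then via reduced hypergraphs in~\cite{cherry}) requires substantially more than this one-line cherry count.

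The inductive step has a structural gap. Splitting $K_{2^{t+1}}^{(3)}$ into two copies of $K_{2^t}^{(3)}$ on $A$ and $B$ produces two kinds of crossing triples: those with two vertices in $A$ and one in $B$, and those with one vertex in $A$ and two in $B$. Your plan (``pair sets encoding that a new vertex must join every pair in $A$'') at best handles the first kind by shrinking the candidate set for vertices of $B$. The second kind, however, constrains \emph{pairs} from $B$ --- every pair $b_1b_2$ must lie in $\bigcap_{a\in A}L_H(a)$ --- and this is not a cherry-type restriction on single vertices. There is no mechanism offered for why the sub-hypergraph on which you would run the second induction (edges whose three pairs all lie in this intersection of links) inherits $\ee$-density at all, let alone with the claimed threshold; the asserted ``arithmetic $\tfrac{t-1}{t}\mapsto\tfrac{t-2}{t-1}$'' is unsupported.

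The proof in~\cite{cherry} avoids these analytic difficulties entirely by passing to reduced hypergraphs (as this paper does for its main result). There the $\ee$-density becomes the clean codegree condition of Definition~\ref{def:cherrydensity}, and the bound $\pi_{\ee}(K_{2^t}^{(3)})\le\tfrac{t-2}{t-1}$ is obtained by a direct combinatorial embedding argument in which the $\binom{2^t}{2}$ vertices $P^{ij}$ are chosen so that each one is subject to at most $t-1$ neighbourhood constraints from previously chosen vertices; the intersection of $t-1$ sets each of relative size exceeding $\tfrac{t-2}{t-1}$ is nonempty. That ordering is where the real content lies, and it is not captured by your two-block decomposition.
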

Here we close the gap for $\piee(K_5^{(3)})$ and show that the known lower bound is best possible.
\begin{thm}[Main result]\label{thm:main}
We have that
	\[
		\piee(K_{5}^{(3)}) = \frac{1}{3}\,.
	\]
\end{thm}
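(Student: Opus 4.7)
The lower bound $\piee(K_5^{(3)}) \geq 1/3$ is already recorded in Theorem~\ref{thm:K2r}, so the content of Theorem~\ref{thm:main} lies entirely in the matching upper bound. Thus the plan is to fix $d > 1/3$ arbitrarily, choose $\eta = \eta(d) > 0$ sufficiently small, and show that every sufficiently large $(\eta, d, \ee)$-dense hypergraph $H$ contains a copy of $K_5^{(3)}$.

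My first move would be to reformulate the $\ee$-density hypothesis as a local condition on links. A short Cauchy--Schwarz argument applied to~\eqref{eq:eedef}, of a type already used in~\cite{cherry}, translates $(\eta, d, \ee)$-density into the statement that all but a negligible proportion of vertices $v \in V(H)$ have an $\eps$-quasirandom link graph $L_v$ of edge density at least $d - \eps$, where $\eps \to 0$ as $\eta \to 0$. This reduces the theorem to the strengthened assertion (matching the wording of the abstract) that any large hypergraph for which a positive-density vertex subset $U$ has quasirandom links of density strictly above $1/3$ must contain a $K_5^{(3)}$.

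To exhibit the clique, I would count five-tuples $(v_1, \ldots, v_5) \in U^5$ spanning a $K_5^{(3)}$ through a chained use of link quasirandomness. Fixing $v_1 \in U$, the quasirandom graph $L_{v_1}$ contains $(1 + o(1))\,d^6 \binom{n}{4}$ copies of $K_4$; each such $K_4$ provides a quadruple $(v_2, v_3, v_4, v_5)$ for which the six hyperedges through $v_1$ are already present, and what remains is to arrange that $\{v_2, v_3, v_4, v_5\}$ itself spans a $K_4^{(3)}$ in $H$. I would pick $v_2, v_3, v_4, v_5$ iteratively within linear-sized candidate sets: at stage $k$, the candidate set is the intersection of the joint neighbourhoods $N_H(v_i, v_j)$ for $i, j < k$, and quasirandomness of the links of $v_1, \ldots, v_{k-1}$ would be used to maintain that this intersection has size $\Omega(n)$ with induced density above an appropriate threshold.

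The core obstacle is that these nested joint neighbourhoods are not a priori quasirandom in the original graph-theoretic sense, so densities cannot be multiplied naively across stages. I would address this by exploiting the flexibility in the choice of $P$ and $Q$ in~\eqref{eq:eedef}: by specialising them to Cartesian products of joint neighbourhoods of already-chosen vertices, the $\ee$-density condition supplies non-trivial lower bounds on precisely the joint cherry counts needed at each stage. The threshold $1/3$ would then emerge from an explicit optimisation matching the extremal construction behind the lower bound in Theorem~\ref{thm:K2r}: the chained count is forced to be strictly positive precisely when $d > 1/3$, and carefully calibrating the inductive losses against the slack $d - 1/3$ (and the auxiliary parameter $\eps$) is the main quantitative effort of the argument.
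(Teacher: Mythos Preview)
Your first reduction step contains a genuine error. The $(\eta, d, \ee)$-density condition of Definition~\ref{def:ee} is a \emph{one-sided} lower bound on cherry counts, and no Cauchy--Schwarz manoeuvre can extract two-sided link quasirandomness from it. The introduction says this explicitly: having quasirandom links is a strictly stronger hypothesis than $\ee$-density, and Section~6 returns to the distinction between the one-sided and two-sided notions. Concretely, adding arbitrary further edges to an $(\eta, d, \ee)$-dense hypergraph keeps it $(\eta, d, \ee)$-dense while destroying any control on the upper tail of link edge counts. So at best your reduction would address Corollary~\ref{cor:main}, not Theorem~\ref{thm:main}.

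Even granting the quasirandom-links hypothesis, the remainder of the sketch is a hope rather than a plan. You correctly note that nested joint neighbourhoods need not inherit quasirandomness, but specialising $P$ and $Q$ in~\eqref{eq:eedef} only gives \emph{averaged} lower bounds over many vertices, not bounds that survive conditioning on already-chosen $v_1,\dots,v_{k-1}$; there is no mechanism here that lets you iterate. The sentence ``the threshold $1/3$ would then emerge from an explicit optimisation'' hides the entire difficulty: Example~\ref{ex:K5} is $\ee$-dense at density $1/3$ with no $K_5^{(3)}$, so a naive product of codegree densities along any chain cannot distinguish $1/3+\eps$ from $1/3$. The paper's route is accordingly quite different: it passes through hypergraph regularity to reduced hypergraphs (Proposition~\ref{prop:K5reduced}), then carries out a long structural analysis of ``holes'' in such reduced hypergraphs (transitivity, union, and density-increment lemmata in Section~\ref{pf-reduction}) to force a bicoloured structure (Proposition~\ref{prop:reduction}). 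Only in that bicoloured setting does a direct averaging argument go through (Proposition~\ref{prop:bicolored}), and that is where $1/3$ genuinely appears as the output of an explicit inequality. None of that machinery is present in, or replaceable by, your outline.
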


Theorem~\ref{thm:main} has a consequence for hypergraphs with quasirandom links. 
For a hypergraph~$H=(V, E)$ the~\emph{link graph}~$L_H(x)$ of a vertex~$x$
is defined to be the graph with vertex set~$V$ and edge 
set~$\{yz \in V^{(2)}\colon xyz\in E(H)\}$, where for a set $X$ and an integer $k$ we denote by $X^{(k)}$ the set of all $k$-element subsets of $X$. 
Recall that for given~$d\in[0,1]$ and  $\delta >0$ a graph $G=(V,E)$
is said to be~\emph{$(\delta,d)$-quasirandom} if 
for every subset of vertices~$X\subseteq V$ the number of edges~$e(X)$ inside $X$ satisfies
\[
	\bigg|e(X) - 	d\frac{|X|^2}{2}\bigg| \leq  \delta |V|^2\,.
\]
One can check that if all the vertices of a hypergraph~$H$ have 
a~$(\delta, d)$-quasirandom link graph, 
then~$H$ is~$(f(\delta), d, \ee)$-dense, 
where~$f(\delta)\lra0$ as~$\delta\lra0$. In fact, such hypergraphs 
even satisfy in addition a matching upper bound for $e_{\ee}(P,Q)$ 
in~\eqref{eq:eedef} and, hence, having quasirandom links is a stronger property. 
However, the lower bound construction for~$\piee(K_5^{(3)})$ given below
has quasirandom links with density $1/3$ and, therefore, Theorem~\ref{thm:main} 
yields an asymptotically optimal result for such hypergraphs.

\begin{exmp}\label{ex:K5}\sl
For a map $\psi\colon V^{(2)} \longrightarrow \mathds Z /3\mathds Z$ 
we define the hypergraph~$H_\psi=(V,E)$
by
\begin{align}\label{eq:edge}
	xyz\in E
	\qquad\Longleftrightarrow\qquad
	\psi(xy) + \psi(xz)+\psi(zy) \equiv 1 \pmod 3\,.
\end{align}
Observe that for any set of five different vertices~$U=\{u_1, u_2, u_3, u_4, u_5\}$  
double counting yields the identity 
\[
	\sum_{u_iu_ju_k\in {U^{(3)}}} \big(\psi(u_iu_j) + \psi(u_iu_k)+\psi(u_ju_k)\big)
	= 
	3\sum_{u_iu_j\in {U^{(2)}}}\psi(u_iu_j)\,.
\] 
Since the second sum is zero modulo~$3$, at least one of the ten triples in the first sum fails to satisfy~\eqref{eq:edge}.
Consequently,~$H_\psi$ is~$K_5^{(3)}$-free for every map $\psi$.

Moreover, if $\psi$ is chosen uniformly at random, 
then following the lines of the proof of~\cite{cherry}*{Proposition~13.1} shows 
that for every fixed $\delta>0$ and sufficiently large $|V|$
with high probability the hypergraph~$H_\psi$ has the property that all link graphs 
are $(\delta,1/3)$-quasirandom.
\end{exmp}
Summarising the discussion above we arrive at the following corollary, which in light of~Example~\ref{ex:K5} is asymptotically best possible. 
\begin{cor}\label{cor:main}
	For every $\eps>0$ there exist $\delta>0$ and an integer $n_0$ such that every 
	hypergraph on at least $n_0$ vertices all of whose link graphs are 
	$(\delta,1/3+\eps)$-quasirandom 
	contains a copy of $K_5^{(3)}$. \qed
\end{cor}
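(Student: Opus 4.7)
The plan is to deduce the corollary as an immediate consequence of Theorem~\ref{thm:main} combined with the observation, already noted in the paragraph preceding Example~\ref{ex:K5}, that any hypergraph $H$ all of whose vertex links are $(\delta,d)$-quasirandom is automatically $(f(\delta),d,\ee)$-dense for some function $f$ satisfying $f(\delta)\to 0$ as $\delta\to 0$.

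First I would unpack the supremum defining $\piee$ in~\eqref{eq:pieedef}. The set over which this supremum is taken is downward closed in~$d$, since any $(\eta,d,\ee)$-dense hypergraph is trivially also $(\eta,d',\ee)$-dense whenever $d'\leq d$. Theorem~\ref{thm:main} gives $\piee(K_5^{(3)})=1/3<1/3+\eps$, so the value $d^\star=1/3+\eps$ fails to belong to this set. Unpacking the quantifiers, this means that there exist $\eta^\star>0$ and $n^\star\in\NN$ such that every $(\eta^\star,d^\star,\ee)$-dense hypergraph on at least $n^\star$ vertices must contain a copy of $K_5^{(3)}$. To conclude I would then choose $\delta>0$ small enough that $f(\delta)\leq\eta^\star$ and set $n_0=n^\star$: any hypergraph $H$ on at least $n_0$ vertices all of whose links are $(\delta,d^\star)$-quasirandom is $(f(\delta),d^\star,\ee)$-dense, hence $(\eta^\star,d^\star,\ee)$-dense, and therefore contains a copy of $K_5^{(3)}$ as required.

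The only step requiring any verification is the implication from quasirandom links to $\ee$-density used above, but this is a routine computation already asserted in the text. For each potential middle vertex $y\in V$, the pairs $\bigl((x,y),(y,z)\bigr)\in\cK_{\ee}(P,Q)$ with $\{x,y,z\}\in E$ correspond to edges of the link graph $L_H(y)$ between the projected vertex sets $\{x\colon (x,y)\in P\}$ and $\{z\colon (y,z)\in Q\}$, and applying the defining inequality of $(\delta,d)$-quasirandomness to each $L_H(y)$ and summing over $y$ yields the bound in~\eqref{eq:eedef}. I foresee no real obstacle here; the substance of the corollary is carried entirely by Theorem~\ref{thm:main}, and the rest is a mechanical translation between two essentially equivalent notions of density.
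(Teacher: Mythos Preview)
Your proposal is correct and matches the paper's approach exactly: the corollary is marked with a \qed and is meant to follow immediately from Theorem~\ref{thm:main} together with the stated implication that quasirandom links force $\ee$-density, which is precisely the route you take. Your careful unpacking of the supremum in~\eqref{eq:pieedef} and the sketch of the link-by-link verification of~\eqref{eq:eedef} are sound and fill in the details the paper leaves implicit.
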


The proof of Theorem~\ref{thm:main} is based on the regularity method  for hypergraphs. More precisely, 
we shall address the corresponding problem for reduced hypergraphs $\cA$
(see Proposition~\ref{prop:K5reduced}).
The proof of Proposition~\ref{prop:K5reduced} is based on a further reduction 
to the case, when there exists an underlying bicolouring of the pairs~$V^{(2)}$, which corresponds
to a bicolouring of the vertices in the reduced hypergraph $\cA$ (see Proposition~\ref{prop:reduction}).
Finally, we show that in the context of Theorem~\ref{thm:main} such bicoloured reduced hypergraphs yield a~$K_5^{(3)}$ (see Proposition~\ref{prop:bicolored}). 
Sections~\ref{pf-reduction} and~\ref{sec:bicoloured} are devoted to the proofs of Propositions~\ref{prop:reduction} and~\ref{prop:bicolored}.

\section{Reduced hypergraphs and bicolourings}
Similar as in~\cites{RRS-b,RRS-a,nullpaper,cherry} the proof of Theorem~\ref{thm:main} utilises the regularity method for hypergraphs.
This allows us to transfer the problem to an extremal problem for reduced hypergraphs, which play a similar r\^ole for hypergraphs as reduced graphs 
in applications of Szemer\'edi's regularity lemma for graphs.

\begin{definition}\sl
Given a set of indices~$I$ and pairwise disjoint, non-empty sets of vertices~$\mathcal P^{ij}$ for every pair of indices~$ij\in I^{(2)}$, let for every triple of distinct indices~$ijk \in {I^{(3)}}$ a tripartite
 hypergraph~$\mathcal A^{ijk}$ with vertex classes~$\mathcal P^{ij}$,~$\mathcal P^{ik}$, and~$\mathcal P^{jk}$ be given. 
 
 We call the $\binom{\vert I\vert}{2}$-partite hypergraph~$\mathcal A$ defined by 
\[
	V(\mathcal A) = \bigdcup_{ij\in {I^{(2)}}}\cP^{ij} \qqand 
	E(\mathcal A) = \bigdcup_{ijk\in {I^{(3)}}}E(\mathcal A^{ijk})
\]
a \emph{reduced} hypergraph with \emph{index set~$I$}.
Moreover, we say $\cA$ has
\emph{vertex classes~$\mathcal P^{ij}$} and \emph{constituents~$\mathcal A^{ijk}$}. 
\end{definition} 

In this work the index set $I$ will often be an ordered set 
and we may assume $I\subseteq \NN$. 
When we say that a reduced hypergraph is sufficiently large, we mean that its 
index set is sufficiently large. 
Theorem~\ref{thm:main} concerns $\ee$-dense and $K_5$-free hypergraphs~$H$ and next we define the corresponding properties in the context of reduced hypergraphs.

\begin{definition}\label{def:cherrydensity}\sl
For~$d\in[0,1]$ we say that a reduced hypergraph~$\mathcal A$ with index set~$I$ 
is~\emph{$(d,\ee)$-dense}, if for every~$ijk\in I^{(3)}$ and all vertices~$P^{ij}\in\mathcal P^{ij}$ and~$P^{ik}\in\mathcal P^{ik}$ we have
\[
    d\big(P^{ij},P^{ik}\big)
    =
    \big\vert\{P^{jk}\in\mathcal P^{jk}\colon P^{ij}P^{ik}P^{jk} \in E(\mathcal A^{ijk})\} \big\vert 
    \geq 
    d\,\vert\mathcal P^{jk}\vert\,.
\]
\end{definition}

\begin{definition}\sl
We say a reduced hypergraph~$\mathcal A$ with index set $I$ \emph{supports} a clique~$K_\l^{(3)}$ 
if there are an $\l$-element subset $J\subseteq I$ and 
vertices~$P^{ij}\in \mathcal P^{ij}$ for every 
$ij\in J^{(2)}$ such that 
\[
	P^{ij}P^{ik}P^{jk}\in E(\cA^{ijk})
\]
for all $ijk\in J^{(3)}$.
\end{definition}

With these concepts at hand, it follows from~\cite{Christiansurvey}*{Theorem~3.3} that the upper bound in Theorem~\ref{thm:main} is a direct consequence 
of the following statement for reduced hypergraphs.

\begin{prop}\label{prop:K5reduced}
For every~$\eps>0$ every sufficiently large~$\left(\frac{1}{3}+\eps, \ee\right)$-dense reduced hypergraph $\mathcal A$ supports a $K_5^{(3)}$.
\end{prop}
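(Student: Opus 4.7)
The plan is to combine the two intermediate results announced at the end of the introduction, Proposition~\ref{prop:reduction} and Proposition~\ref{prop:bicolored}, in a single short deduction. Given $\eps>0$ and a sufficiently large $(1/3+\eps,\ee)$-dense reduced hypergraph $\cA$ with index set $I$, I would first apply Proposition~\ref{prop:reduction} with a slightly smaller surplus, say $\eps/2$, to pass to a substructure of $\cA$: a subset $J\subseteq I$ whose size is still large enough for what follows, together with non-empty subsets $\cQ^{ij}\subseteq\cP^{ij}$ for every $ij\in J^{(2)}$, and a bicolouring $\chi\colon\bigcup_{ij\in J^{(2)}}\cQ^{ij}\to\{\tred,\tblue\}$, such that the induced reduced hypergraph on the $\cQ^{ij}$ is still $(1/3+\eps/2,\ee)$-dense and, in addition, compatible with $\chi$ in the precise sense required by Proposition~\ref{prop:bicolored}.

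Once the bicoloured reduced hypergraph has been produced, Proposition~\ref{prop:bicolored} applies to it and yields a subset $J'\subseteq J$ of five indices and vertices $P^{ij}\in\cQ^{ij}$ for every $ij\in {J'}^{(2)}$ that witness a supported~$K_5^{(3)}$; since $\cQ^{ij}\subseteq\cP^{ij}$ and $J'\subseteq I$, the same data witnesses a supported~$K_5^{(3)}$ in the original~$\cA$. The whole proof of Proposition~\ref{prop:K5reduced} therefore amounts to a careful choice of constants: the parameters fed into Proposition~\ref{prop:reduction} must be chosen so that the density loss produced by that proposition is absorbed into the buffer~$\eps/2$ and so that the resulting index set $J$ is still of the size demanded by Proposition~\ref{prop:bicolored}, which is arranged by taking $\cA$ large enough at the outset.

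The main obstacle is of course buried in the two intermediate propositions rather than in their combination. In Proposition~\ref{prop:reduction} the hard part is producing a globally coherent bicolouring of all the vertex classes~$\cP^{ij}$ simultaneously, consistent with the $\ee$-density condition on every constituent; this is where the threshold~$1/3$, reflecting the three cosets of $\ZZ/3\ZZ$ that power the extremal Example~\ref{ex:K5}, should enter decisively, since a pair-by-pair partition into vertices with large $\tred$-link versus large $\tblue$-link needs to be upgraded to a consistent global colouring via a Ramsey/averaging step. In Proposition~\ref{prop:bicolored} the hard part is exploiting the rigidity imposed by the bicolouring together with the excess density~$\eps$ to locate five indices and ten compatible vertex representatives, one for each pair; this is a genuinely combinatorial step in which the bicolouring compensates for the loss of the third colour that drives the extremal construction, and I expect the argument to proceed by iteratively restricting neighbourhoods according to the colour pattern of the indices chosen so far.
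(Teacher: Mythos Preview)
Your high-level plan—derive Proposition~\ref{prop:K5reduced} by combining Propositions~\ref{prop:reduction} and~\ref{prop:bicolored}—is the paper's plan, but you have misread both intermediate statements, and as written the deduction does not go through.

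First, Proposition~\ref{prop:reduction} takes as hypothesis that $\cA$ is $\eps$-\emph{wicked} (Definition~\ref{def:wicked}), i.e., that $\cA$ already fails to support a $K_5^{(3)}$; you cannot apply it to an arbitrary $(1/3+\eps,\ee)$-dense reduced hypergraph, so the argument has to be by contradiction. Second, the output $\cA_\star$ is \emph{not} an induced sub-hypergraph on subsets $\cQ^{ij}\subseteq\cP^{ij}$. In the paper it is a random preimage (Definition~\ref{def:randompre} and \S\ref{ssec:pf-reduction}): the vertex classes of $\cA_\star$ are fresh sets of some fixed size~$\ell$, and all that connects $\cA_\star$ to $\cA$ is a homomorphism. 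Consequently ``$\cA_\star$ does not support $K_5^{(3)}$'' is part of the \emph{conclusion} of Proposition~\ref{prop:reduction} (inherited from the wickedness of $\cA$ via that homomorphism), not something you establish by pulling a $K_5^{(3)}$ back along an inclusion. Third, the density assertion on $\cA_\star$ is $\tau_2(\cA_\star,\phi)\ge 1/3+\eps/8$, the minimum \emph{monochromatic codegree} density of~\eqref{eq:mmcd-def}; $\cA_\star$ is not claimed to be $(\cdot,\ee)$-dense at all, and it is exactly this $\tau_2$-bound that Proposition~\ref{prop:bicolored} consumes.

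With these corrections the combination is one line: if some sufficiently large $\cA$ were $\eps$-wicked, choose $t$ large enough for Proposition~\ref{prop:bicolored} applied with $\eps/8$; Proposition~\ref{prop:reduction} yields a bicoloured $\cA_\star$ on at least $t$ indices with $\tau_2(\cA_\star,\phi)\ge 1/3+\eps/8$ that does not support $K_5^{(3)}$, while Proposition~\ref{prop:bicolored} says it does. Your closing speculation about the mechanism of Proposition~\ref{prop:reduction} is also off: the bicolouring does not come from a ``large red-link versus large blue-link'' dichotomy but from locating two large, almost disjoint \emph{holes} in $\cA$ (see \S\ref{sec:holes}--\S\ref{sec:bicolourisation}) and colouring vertices of $\cA_\star$ by which hole their image lands in.
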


The proof of Proposition~\ref{prop:K5reduced} proceeds by contradiction, so we assume 
that for some~$\eps>0$ there 
are~$(\frac{1}{3}+\eps, \ee)$-dense reduced hypergraphs of unbounded size that do not support $K_5^{(3)}$. 
This motivates the following notion.
\begin{definition}\label{def:wicked}\sl
	For $\eps>0$ we say a reduced hypergraph $\cA$ is \emph{$\eps$-wicked} if 
	it is $(\frac{1}{3}+\eps, \ee)$-dense and fails to support a $K_5^{(3)}$.
\end{definition}

Proposition~\ref{prop:K5reduced} asserts that wicked reduced hypergraphs cannot have 
too many indices
and the proof is divided into two main parts. 
First we reduce the problem to the case in which the reduced hypergraph $\cA$ on some index set $I$ 
can be \emph{bicoloured}.
By this we mean that there is a colouring~$\phi\colon V(\mathcal A)\to\{\tred, \tblue\}$ of the vertices such that for every~$ij\in I^{(2)}$ we have
\begin{equation}\label{eq:nontrivial}
	\phi^{-1}(\tred)\cap \cP^{ij}\neq\emptyset
	\qqand
	\phi^{-1}(\tblue)\cap\cP^{ij}\neq\emptyset
\end{equation}
and there are no hyperedges in $\cA$ with all three vertices of the same colour.
Given such a colouring~$\phi$, we define the \emph{minimum monochromatic codegree density of~$\mathcal A$ and $\phi$} by 
\begin{equation}\label{eq:mmcd-def}
	\tau_2(\mathcal A,\phi)
	=
	\min_{ijk\in I^{(3)}}
	\min	\Big\{
		\frac{d(P^{ij}, P^{ik})}{\mathcal \vert\mathcal P^{jk} \vert}\colon 
		P^{ij}\in \mathcal P^{ij},\ P^{ik}\in \mathcal P^{ik}, \tand \phi(P^{ij})=\phi(P^{ik})
	\Big\}\,.
\end{equation}

The following proposition reduces Proposition~\ref{prop:K5reduced} to bicoloured reduced hypergraphs.

 \begin{prop}\label{prop:reduction}
Given~$\eps>0$ and~$t\in \NN$, let $\cA$ be a sufficiently large $\eps$-wicked reduced hypergraph. 
There exist a reduced hypergraph~$\cA_\star$ with index set of size at least~$t$ 
not supporting a $K_5^{(3)}$
and a bicolouring~$\phi$ of~$\cA_\star$ such that~$\tau_2(\cA_\star, \phi)\geq \frac{1}{3}+\frac{\eps}{8}$. 
\end{prop}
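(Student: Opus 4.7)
The plan is to produce $\cA_\star$ together with the bicolouring $\phi$ by a two-step procedure: first, use the wickedness of $\cA$ to locate a rich sub-configuration with exploitable local structure; then refine via Ramsey-type arguments on the index set to propagate this structure into a consistent global bicolouring.

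Concretely, since $\piee(K_4^{(3)})=0$ by Theorem~\ref{thm:K2r}, the wicked hypergraph $\cA$ supports abundantly many copies of $K_4^{(3)}$. I would fix one supported $K_4^{(3)}$ on indices $\{a,b,c,d\}$ with witnessing vertices $P^{ab},P^{ac},P^{ad},P^{bc},P^{bd},P^{cd}$. For each remaining index $e\in I$, the fact that $\cA$ fails to support $K_5^{(3)}$ means that no 4-tuple $(P^{ae},P^{be},P^{ce},P^{de})\in\cP^{ae}\times\cP^{be}\times\cP^{ce}\times\cP^{de}$ simultaneously completes the six new edge conditions needed to extend the fixed $K_4^{(3)}$. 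This forbidden-configuration structure should naturally partition each class $\cP^{\cdot e}$ into a ``compatible'' and an ``incompatible'' portion, giving a local candidate bicolouring. I would then colour each pair $ef\in (I\setminus\{a,b,c,d\})^{(2)}$ by the interaction pattern between these partitions across the constituents $\cA^{aef},\cA^{bef},\cA^{cef},\cA^{def}$, and invoke Ramsey's theorem to obtain a monochromatic sub-index-set $I_\star\subseteq I$ of size at least $t$ on which the partition and its behaviour under all constituents is uniform. The restriction of $\cA$ to $I_\star$, with $\cP^{ij}_\star\subseteq\cP^{ij}$ obtained from the two parts and $\phi$ colouring them red and blue, would then be the candidate $\cA_\star$.

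The main obstacle is verifying $\tau_2(\cA_\star,\phi)\geq\frac{1}{3}+\frac{\eps}{8}$. The original density guarantees that every pair in $\cA$ has codegree at least $(\frac{1}{3}+\eps)|\cP^{jk}|$, but only a portion of these neighbours falls into the correct colour class inside $\cP^{jk}_\star$. Tracking the losses carefully---one from restricting to $I_\star$, one from restricting each $\cP^{jk}$ to a subset, and one from requiring a same-colour pair's neighbours to land in the opposite colour part---should account for the factor of $8$ in the denominator, but each step needs a precise averaging argument. A secondary obstacle is to guarantee simultaneously that both colour classes of each $\cP^{ij}_\star$ are non-empty (condition~\eqref{eq:nontrivial}) and that no hyperedge of $\cA_\star$ is monochromatic; both should flow from the $K_5^{(3)}$-extension obstruction, but the consistency of these conditions across all triples $ijk\in I_\star^{(3)}$ is where I expect the bulk of the Ramsey bookkeeping to lie.
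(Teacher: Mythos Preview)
Your approach has a genuine gap at the point where the $K_5^{(3)}$-extension obstruction is supposed to produce a bicolouring. The failure to extend the fixed $K_4^{(3)}$ on $\{a,b,c,d\}$ to an index $e$ is a $4$-ary constraint on quadruples in $\cP^{ae}\times\cP^{be}\times\cP^{ce}\times\cP^{de}$; it does not single out a subset of any one of these classes, and it says nothing whatsoever about $\cP^{ef}$ for $e,f\notin\{a,b,c,d\}$---yet these are precisely the classes that must be partitioned in $\cA_\star$. More seriously, even granting some partition, a bicolouring requires that \emph{no} hyperedge of $\cA_\star$ be monochromatic, and your construction offers no mechanism for this: why should three same-coloured vertices $P^{ef},P^{eg},P^{fg}$ fail to span an edge of $\cA^{efg}$? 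The anticipated ``Ramsey bookkeeping'' is not bookkeeping but a missing structural idea.

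The paper supplies this idea through \emph{holes}: sets $\Phi\subseteq V(\cA)$ with $|\Phi\cap\cP^{ij}|\ge(1/3+\eps)|\cP^{ij}|$ for all $ij$ that induce almost no edges. After showing that such holes exist (via links of transversals, Lemma~\ref{lem:lamda}), the argument proves that the relation ``$\eps$-intersecting on every class'' is, on a large sub-index-set, an equivalence with exactly two classes (Lemma~\ref{lem:transitivity}, Corollary~\ref{cor:equivalent}, Lemma~\ref{lem:1247}), that equivalent holes merge into holes (Lemma~\ref{lem:union}), and, via a density increment (Lemma~\ref{lem:bad}), that one can reach two $\eps$-disjoint holes $\Phi,\Psi$ such that almost every cherry inside one hole has its codegree neighbourhood inside the other. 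One then colours $\Phi\setminus\Psi$ red and $\Psi\setminus\Phi$ blue and deletes all monochromatic edges; the near-independence of each hole is exactly what makes this deletion innocuous. The bound $\tau_2\ge 1/3+\eps/4$ then holds for all non-bad monochromatic cherries by a direct computation (Claim~\ref{clm:1517}), and a random preimage (Definition~\ref{def:randompre}) absorbs the rare bad cherries at the cost of halving the slack to $\eps/8$. Your ``three losses'' heuristic does not match this arithmetic; the factor $8$ comes from one explicit inequality chain followed by a Chernoff estimate, not from three independent halvings.
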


For the proof of Proposition~\ref{prop:reduction} we mainly analyse holes in wicked reduced hypergraphs, i.e., 
subsets of vertices inducing very few edges. It turns out that we can find two ``large''
but almost disjoint holes such that most edges with two vertices in one of the holes 
have their third vertex in the other hole. This configuration can be used to
define an auxiliary reduced hypergraph $\cA_\star$ admitting an appropriate 
colouring~$\phi$ (see Section~\ref{pf-reduction}).

The next proposition completes the proof of Proposition~\ref{prop:K5reduced} by contradicting the conclusion of Proposition~\ref{prop:reduction}, thus showing 
that large wicked hypergraphs indeed do not exist.
\begin{prop}\label{prop:bicolored}
For every~$\eps>0$ every sufficiently large bicoloured reduced hypergraph~$\mathcal A$ 
with~$\tau_2(\mathcal A,\phi) \geq \frac{1}{3}+\eps$ supports a~$K_5^{(3)}$. 
\end{prop}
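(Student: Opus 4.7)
I plan to proceed by contradiction: suppose there exists a sufficiently large bicoloured reduced hypergraph $\cA$ with $\tau_2(\cA,\phi)\geq \tfrac13+\eps$ that does not support a $K_5^{(3)}$, and aim for a contradiction. First I extract two basic structural facts. Since no hyperedge is monochromatic, for any pair $(P^{ij},P^{ik})$ of the same colour, all of their at least $(\tfrac13+\eps)|\cP^{jk}|$ common edge-neighbours in $\cP^{jk}$ must lie in the opposite colour class. Combined with~\eqref{eq:nontrivial}, this yields the two-sided bound $|R_{ij}|,|B_{ij}|\in\bigl[(\tfrac13+\eps)|\cP^{ij}|,(\tfrac23-\eps)|\cP^{ij}|\bigr]$ for every $ij\in I^{(2)}$, where $R_{ij}=\phi^{-1}(\tred)\cap\cP^{ij}$ and $B_{ij}=\phi^{-1}(\tblue)\cap\cP^{ij}$.

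Second, a short combinatorial observation: up to permuting colours and vertices, the unique $2$-edge-colouring of $K_5$ without monochromatic triangles is the one in which both colour classes form $5$-cycles, i.e., $C_5+C_5$ (the classical construction showing $R(3,3)>5$). Moreover, restricted to any induced $K_4\subset K_5$, this pattern becomes $P_4+P_4$. Therefore, any $K_5^{(3)}$ supported by $\cA$ must use the $C_5+C_5$ colour pattern on its ten pair-vertices, and every $K_4^{(3)}$ inside it must use the $P_4+P_4$ pattern.

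Guided by this, I split the search into two phases. In the first phase, I would find many copies of $K_4^{(3)}$ in $\cA$ carrying the $P_4+P_4$ colour pattern: iterating the monochromatic codegree hypothesis on four indices produces a wealth of $K_4^{(3)}$'s, and averaging over the few non-monochromatic colour patterns on $K_4$ (up to isomorphism only $P_4+P_4$ and $M_2+C_4$ qualify) isolates the $P_4+P_4$ contribution; note that $M_2+C_4$ need not be extendable at all, since in a $C_5+C_5$-extension every vertex has equal red and blue degree $2$, whereas from $M_2+C_4$ vertex $i_5$ would be forced to have red-degree $4$ and thus create a monochromatic triangle. In the second phase, I extend a chosen $K_4^{(3)}$ on $\{i_1,\dots,i_4\}$ by adjoining a fifth index $i_5$ and four new pair-vertices $P^{i_1i_5},\dots,P^{i_4i_5}$. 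The $C_5+C_5$-extension uniquely prescribes the colours of the four new pairs (red on the two endpoints and blue on the two internal vertices of the red $P_4$), and the six new triangles translate into six simultaneous monochromatic codegree constraints on the four unknown vertices.

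The main obstacle is precisely this simultaneous six-fold constraint: the four new vertices are coupled through intersecting monochromatic codegree conditions across six triangles, so one cannot simply apply $\tau_2$ to each triangle in isolation. My plan is to exploit the abundance of candidate fifth indices $i_5\in I$: by averaging over $i_5$ and applying the $\tau_2$-bound iteratively inside the links, I would show that a positive fraction of choices of $i_5$ admit a valid extension of the $K_4^{(3)}$. The strict inequality $\tau_2\geq\tfrac13+\eps$ is essential here: at $\tau_2=\tfrac13$, Example~\ref{ex:K5} already exhibits a $K_5^{(3)}$-free hypergraph meeting the bound, so the quantitative slack $\eps$ must be genuinely exploited, most plausibly through a stability-style argument or a careful counting bound controlling the interaction of the six codegree constraints that pin down the extension.
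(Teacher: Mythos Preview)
Your structural observations are correct: the colour-class size bounds follow as you say, and the $C_5+C_5$ pattern is indeed forced on any supported $K_5^{(3)}$. But what you have written is an outline, not a proof, and the gap sits exactly where you locate it yourself. Your final sentence concedes that the six-fold simultaneous constraint in the extension step would require ``a stability-style argument or a careful counting bound'' that you do not supply. That counting bound \emph{is} the content of the proposition; everything preceding it is routine.

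More seriously, the two-phase architecture --- first freeze a $K_4^{(3)}$ on four indices, then extend by averaging over a fifth index $i_5$ --- is unlikely to succeed as stated. Once the six pair-vertices of the $K_4^{(3)}$ are fixed you have no residual freedom on that side, and the four new pair-vertices must simultaneously satisfy six hyperedge conditions; with $\tau_2$ only barely above $1/3$ nothing prevents this system from being infeasible for every choice of $i_5$. Averaging over $i_5$ does not rescue this, since the constraints are not density conditions in the variable $i_5$ but joint conditions on the new pair-vertices, and a badly chosen $K_4^{(3)}$ may simply admit no extension anywhere. The paper proceeds quite differently: it first applies Ramsey to the index set to stabilise the red/blue proportions $\rho_{ij}\approx\rho$ and $\beta_{ij}\approx\beta$ across a fixed set of five indices (a step your plan omits entirely), and then selects all ten pair-vertices in a carefully interleaved order --- not four-then-six --- so that at each stage an averaging argument reserves just enough slack for the remaining constraints. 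The numerics are tight: the decisive step hinges on the inequality $\tfrac{2}{3\beta}+\tfrac{1}{3\rho}>2$ for $\beta\le \tfrac12\le \rho$ with $\beta+\rho=1$, which holds only because of the $\eps$. This is precisely the ``careful counting bound'' your proposal gestures at but does not provide.
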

The proof of Proposition~\ref{prop:bicolored} is deferred to Section~\ref{sec:bicoloured}.

\section{Preliminaries}
\label{sec:preliminaries}

In this section we introduce some necessary definitions and properties for reduced hypergraphs.

\subsection{Transversals and cherries}
We start with the following notion for reduced hypergraphs $\cA$ with index set $I$. 
For~$J\subseteq I$ we refer to a set of 
vertices~$\cQ(J)=\{Q^{ij}\colon ij\in J^{(2)}\}$ with~$Q^{ij}\in \cP^{ij}$ 
for all $ij\in J^{(2)}$ as a~\emph{$J$-transversal}. 
Similarly, for two disjoint subsets of indices~$K,L\subseteq I$ we say 
that~$\cQ(K,L)=\{Q^{k\ell} \colon (k,\ell)\in K\times L\}$ is 
a \emph{$(K,L)$-transversal} when~$Q^{k\ell}\in \cP^{k\ell}$ 
for all $(k,\ell)\in K\times L$. Transversals will always be denoted by  
calligraphic capital letters and the vertices they contain are denoted by the
corresponding Roman capital letters (equipped with a pair of indices as superscript). 

For subsets~$J_\star\subseteq J$, $K_\star\subseteq K$, and~$L_\star\subseteq L$ 
we refer to the transversals $\cQ(J_\star)\subseteq \cQ(J)$ and ~$\cQ(K_\star, L_\star)\subseteq \cQ(K,L)$ (defined in the obvious way) as \emph{restricted transversals}.
Whenever the sets~$J$, $K$, $L\subseteq I$ are clear from the context, we may omit them and write \emph{transversal} to refer to~$J$-transversals or to~$(K,L)$-transversals.

Let us recall that we are often assuming implicitly that our index sets are accompanied 
by a distinguished linear order denoted by $<$. 
Since we are working with~$\ee$-dense reduced hypergraphs (see Definition~\ref{def:cherrydensity}), pairs of vertices sharing one index will play an important r\^ole. 
More precisely, given indices $ijk\in I^{(3)}$ with~$i<j<k$ and given vertices~$P^{ij}\in \cP^{ij}$, $P^{ik}\in \cP^{ik}$, and~$P^{jk}\in \cP^{jk}$ we say that the ordered pair~$(P^{ij},P^{ik})$ is a \emph{left cherry}, the ordered pair~$(P^{ik},P^{jk})$ is a \emph{right cherry}, and the ordered pair~$(P^{ij},P^{jk})$ is a \emph{middle cherry}.
Often we refer to them simply as \emph{cherries}.

For indices~$ijk\in I^{(3)}$ and a set of 
left cherries~$\ccL^{ijk} \subseteq \cP^{ij}\times \cP^{ik}$ we say a 
transversal~$\cQ$ \emph{avoids}~$\ccL^{ijk}$ if~$(Q^{ij}, Q^{ik})\not\in \ccL^{ijk}$ for~$Q^{ij}, Q^{ik}\in \cQ$. Furthermore, we say~$\cQ$ avoids a set of left 
cherries $\ccL=\bigcup_{ijk\in I^{(3)}}\ccL^{ijk}$, if it 
avoids~$\ccL^{ijk}$ for every $ijk\in I^{(3)}$. Similarly, $\cQ$ avoids 
a set of right cherries $\ccR^{ijk}\subseteq \cP^{ik}\times \cP^{jk}$ if 
$(Q^{ik}, Q^{jk})\not\in \ccR^{ijk}$, and $\cQ$ avoids a set of right cherries 
$\ccR=\bigcup_{ijk\in I^{(3)}}\ccR^{ijk}$ if it avoids each $\ccR^{ijk}$. 
Note that these definitions apply both to $J$-transversals and to $(K,L)$-transversals.

\subsection{Inhabited transversals in weakly dense reduced hypergraphs}
We shall utilise a key result from~\cite{nullpaper} on $\vvv$-dense hypergraphs.
Roughly speaking, this notion concerns hypergraphs which have a uniform edge 
distribution on large sets of vertices. However, here we restrict ourselves to the corresponding concepts for 
reduced hypergraphs arising after an application of the hypergraph regularity lemma 
(see, e.g.,~\cites{nullpaper, Christiansurvey} for more details).

\begin{definition}\label{def:threedotdensity}\sl
Let~$\mu>0$ and let~$\mathcal A$ be a reduced hypergraph on an index set~$I$.
We say that~$\mathcal A$ is~\emph{$(\mu,\vvv)$-dense}, if for every~$ijk\in I^{(3)}$ we have
\begin{equation}
    e(\cA^{ijk})
    \ge 
    \mu\,\vert \cP^{ij}\vert \vert \cP^{ik}\vert \vert \cP^{jk}\vert\,.\label{iq:threedotdensity}
\end{equation}
Further, for disjoint subsets of indices~$K,L,M\subseteq I$ we say that~$\mathcal A$ is~\emph{$(\mu,\vvv)$-tridense on~$K,L,M$}, if \eqref{iq:threedotdensity} holds for every triple~$(i,j,k)$ in~$K \times L\times M$.
\end{definition}
Note that by definition every $(d,\ee)$-dense reduced hypergraph is also $(d,\vvv)$-dense.
The following result from~\cite{nullpaper}*{Lemma~3.1} states the existence of transversals containing edges in~$\vvv$-dense reduced hypergraphs. 

\begin{thm}\label{lem:nullpaper}
Let~$t\in \mathds N$, $\mu>0$, and let~$\cA$ be a~$(\mu,\vvv)$-dense reduced hypergraph on 
a sufficiently large index set~$I$. 
There exist a set~$I_\star\subseteq I$ of size~$t$ and three transversals~$\cQ(I_\star)$, 
$\cR(I_\star)$, and~$\cS(I_\star)$ such that~$Q^{ij}R^{ik}S^{jk}\in E(\cA)$ for all~$i<j<k$ in~$I_\star$. \qed
\end{thm}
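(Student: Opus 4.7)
The plan is to prove the lemma by induction on $t$. The base case $t=3$ is immediate from~\eqref{iq:threedotdensity}: any three indices $i<j<k$ in $I$ yield at least one edge in $\cA^{ijk}$, which we label as $(Q^{ij},R^{ik},S^{jk})$.

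For the inductive step, given an already constructed $I_\star=\{i_1<\cdots<i_s\}$ with compatible transversals $\cQ(I_\star),\cR(I_\star),\cS(I_\star)$, I would extend by picking a new index $i_{s+1}$ along with new transversal vertices $R^{i_a i_{s+1}}$, $S^{i_a i_{s+1}}$, $Q^{i_a i_{s+1}}$ for $a\leq s$ (the last only matters for further extensions, so it can be chosen arbitrarily at this stage). Adding $i_{s+1}$ introduces $\binom{s}{2}$ new edge constraints of the form $(Q^{i_a i_b}, R^{i_a i_{s+1}}, S^{i_b i_{s+1}})\in E(\cA^{i_a i_b i_{s+1}})$ for $a<b\leq s$, in which the $Q^{i_a i_b}$ are already fixed. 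My strategy is to average over candidate indices $i_{s+1}$: by the $(\mu,\vvv)$-density of each constituent $\cA^{i_a i_b i_{s+1}}$ combined with a standard probabilistic calculation (random choice of the $R$'s and $S$'s and a union bound over the $\binom{s}{2}$ constraints), a positive fraction of candidate indices $i_{s+1}$ admits compatible choices of $R^{i_a i_{s+1}}$ and $S^{i_b i_{s+1}}$ satisfying all new constraints simultaneously, provided the codegrees of the committed $Q$-vertices in the relevant constituents are substantial.

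The main obstacle is precisely that the previously chosen vertices $Q^{i_a i_b}$ might happen to have small codegree in a given future constituent $\cA^{i_a i_b i_{s+1}}$, which would prevent the averaging step from succeeding. To handle this, I would start the argument with an upfront preparation: perform a Ramsey-style pigeonhole refinement of $I$ to pass to a large subset on which all constituents behave sufficiently uniformly, and then select preliminary vertices $Q^{ij}\in\cP^{ij}$ at random so that, by a Markov-type estimate applied to~\eqref{iq:threedotdensity}, with high probability each $Q^{ij}$ retains above-average codegree in most constituents $\cA^{ijk}$. The iteration described above is then executed on this well-prepared subset, with enough remaining candidate indices at every step to guarantee that some choice of $i_{s+1}$ meets the required codegree condition for all pairs $a<b\leq s$ at once.
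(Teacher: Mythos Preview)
The paper does not prove this statement; it is quoted verbatim from~\cite{nullpaper}*{Lemma~3.1} and marked with a \qed. So there is no ``paper's own proof'' to compare against beyond the reference. That said, your proposed argument has a genuine gap.

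The inductive step does not go through via a union bound. When you append~$i_{s+1}$, the new constraints are
\[
	Q^{i_ai_b}\,R^{i_ai_{s+1}}\,S^{i_bi_{s+1}}\in E(\cA^{i_ai_bi_{s+1}})
	\quad\text{for all }1\le a<b\le s\,.
\]
Even if every $Q^{i_ai_b}$ has codegree density at least $\mu'$ in the relevant constituent, a uniformly random choice of the $R$'s and $S$'s satisfies each single constraint with probability $\ge\mu'$, so the union bound on the $\binom{s}{2}$ failure events yields only $\Pr[\text{all hold}]\ge 1-\binom{s}{2}(1-\mu')$, which is negative already for $s=3$ unless $\mu'>2/3$. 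So the ``standard probabilistic calculation'' you invoke gives nothing.

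Worse, the extension problem is not merely a matter of sharper bookkeeping: for a fixed $i_{s+1}$ it can be genuinely unsatisfiable even when all codegrees are large. Writing $H_{ab}=N_{\cA^{i_ai_bi_{s+1}}}(Q^{i_ai_b})\subseteq\cP^{i_ai_{s+1}}\times\cP^{i_bi_{s+1}}$, you are asking for $R_a,S_b$ with $(R_a,S_b)\in H_{ab}$ for all $a<b$. Take $s=4$, identify each $\cP^{i_ai_{s+1}}$ with $\ZZ/2\ZZ$, and let $H_{ab}$ be the bipartite graph of pairs $(x,y)$ with $x+y\equiv c_{ab}\pmod 2$, where $c_{13}=c_{14}=c_{23}=0$ and $c_{24}=1$ (and $c_{12},c_{34}$ arbitrary). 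Each $H_{ab}$ has density $1/2$, yet the resulting linear system over $\ZZ/2\ZZ$ forces $c_{13}+c_{14}+c_{23}+c_{24}\equiv 0$, which fails. Hence no choice of $R_1,R_2,R_3,S_2,S_3,S_4$ exists. Your ``upfront preparation'' only controls degrees, not the structure of these neighbourhoods, so it cannot rule out such obstructions; and nothing in the $(\mu,\vvv)$-density hypothesis prevents every candidate $i_{s+1}$ from exhibiting this behaviour relative to the already committed $Q$'s.

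The actual proof in~\cite{nullpaper} does not proceed by naive induction on~$t$; it uses a more global argument (iterated averaging and Ramsey-type refinements applied to the whole index set at once) that avoids ever having to solve this simultaneous-extension problem for fixed vertices $Q^{i_ai_b}$.
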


Triples of transversals satisfying the conclusion of Theorem~\ref{lem:nullpaper} will play an important r\^ole here and this motivates the following definition.

\begin{dfn}[inhabited triple of transversals]
\label{def:inhabited}
\sl
Given a reduced hypergraph~$\cA$ with index set~$I$,
we say a triple of transversals $\cQ(J)\cR(J)\cS(J)$ for some $J\subseteq I$ is \emph{inhabited}
if for all $i<j<k$ in $J$ we have~$Q^{ij}R^{ik}S^{jk}\in E(\cA)$.

Similarly, for pairwise disjoint sets of indices~$K$, $L$, $M\subseteq I$,
we say a triple of transversals $\cQ(K, L)\cR(K, M)\cS(L, M)$  is 
\emph{inhabited} if for every~$k\in K$,~$\ell\in L$, and~$m\in M$ 
we have~$Q^{k\ell}R^{km}S^{\ell m}\in E(\cA)$. 
\end{dfn}

We will also need a version of Theorem~\ref{lem:nullpaper} in which the resulting transversals avoid given sets of forbidden cherries.

\begin{lemma}\label{lem:inhabitedtransversals}
	For all~$t\in \mathds N$ and~$\mu>0$ there is~$\mu'>0$ such that the following holds.
	Let~$\mathcal A$ be a~$(\mu,\vvv)$-dense reduced hypergraph on a sufficiently large 
	index set~$I$ and for all~$i<j<k$ in~$I$ 
	let~$\ccL^{ijk}\subseteq \mathcal P^{ij}\times \mathcal P^{ik}$ 
	and~$\ccR^{ijk}\subseteq\mathcal P^{ik}\times\mathcal P^{jk}$ 
	be sets of left and right cherries satisfying
	\begin{align*}
	    \vert\ccL^{ijk}\vert
  	 	 \leq 
  	  		\mu' \vert\mathcal P^{ij}\vert
  	  		\vert\mathcal P^{ik}\vert 
  	  		\qquad \text{and}\qquad
  	  		\vert\ccR^{ijk}\vert 
  	  	\leq 
    		\mu' \vert\mathcal P^{ik}\vert
    		\vert\mathcal P^{jk}\vert \,.
	\end{align*}
	There exist a set~$I_\star\subseteq I$ of size~$t$ and an inhabited triple 
	of transversals~$\cQ(I_\star)\cR(I_\star)\cS(I_\star)$ avoiding the 
	cherries~$\ccL^{ijk}$ and $\ccR^{ijk}$ for every $ijk \in  I_\star^{(3)}$.

\end{lemma}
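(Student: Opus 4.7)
The plan is to deduce Lemma~\ref{lem:inhabitedtransversals} from Theorem~\ref{lem:nullpaper} by a straightforward edge-deletion argument. Given $t$ and $\mu$, I would choose $\mu' = \mu/4$ (or any value with $2\mu' < \mu$), so that the promised "sufficiently large" index set size will be the one guaranteed by Theorem~\ref{lem:nullpaper} applied with parameters $t$ and $\mu/2$.

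Next I would construct a cleaned-up reduced hypergraph $\tilde{\cA}$ on the same vertex classes as $\cA$ by removing, for each $ijk\in I^{(3)}$ with $i<j<k$, every edge $P^{ij}P^{ik}P^{jk}\in E(\cA^{ijk})$ whose left cherry $(P^{ij},P^{ik})$ lies in $\ccL^{ijk}$ or whose right cherry $(P^{ik},P^{jk})$ lies in $\ccR^{ijk}$. Each forbidden left cherry extends to at most $|\cP^{jk}|$ edges and each forbidden right cherry to at most $|\cP^{ij}|$ edges, so the number of deleted edges in the constituent $ijk$ is bounded by
\[
	|\ccL^{ijk}|\cdot|\cP^{jk}| + |\ccR^{ijk}|\cdot|\cP^{ij}|
	\leq
	2\mu'\,|\cP^{ij}||\cP^{ik}||\cP^{jk}|
	=
	\tfrac{\mu}{2}\,|\cP^{ij}||\cP^{ik}||\cP^{jk}|.
\]
Combined with the $(\mu,\vvv)$-density of $\cA$, this shows that $\tilde{\cA}$ is $(\mu/2,\vvv)$-dense.

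Now I would apply Theorem~\ref{lem:nullpaper} to $\tilde{\cA}$, obtaining a set $I_\star\subseteq I$ of size $t$ and transversals $\cQ(I_\star)$, $\cR(I_\star)$, $\cS(I_\star)$ with $Q^{ij}R^{ik}S^{jk}\in E(\tilde{\cA})$ for all $i<j<k$ in $I_\star$. Because every edge of $\tilde{\cA}$ is, by construction, an edge of $\cA$ whose left cherry lies outside $\ccL^{ijk}$ and whose right cherry lies outside $\ccR^{ijk}$, this inhabited triple automatically avoids the prescribed forbidden cherries, which is exactly the conclusion required. Note that the middle cherries $(Q^{ij},S^{jk})$ are not constrained, which matches the hypothesis of the lemma.

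There is no genuine obstacle in this argument: the lemma is essentially a robust reformulation of Theorem~\ref{lem:nullpaper}, and the deletion trick is routine. The only point worth verifying carefully is the bookkeeping between the three transversals $\cQ$, $\cR$, $\cS$ and the cherry types—namely, that the left cherry from the triple $Q^{ij}R^{ik}S^{jk}$ is $(Q^{ij},R^{ik})\in\cP^{ij}\times\cP^{ik}$ and the right cherry is $(R^{ik},S^{jk})\in\cP^{ik}\times\cP^{jk}$, which aligns with the index conventions of $\ccL^{ijk}$ and $\ccR^{ijk}$.
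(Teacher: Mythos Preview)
Your argument rests on a misreading of what it means for the triple of transversals to ``avoid'' the forbidden cherries. According to the paper's definition, a transversal $\cQ$ avoids $\ccL^{ijk}$ when $(Q^{ij},Q^{ik})\notin\ccL^{ijk}$, and the lemma requires \emph{each} of $\cQ$, $\cR$, $\cS$ separately to avoid every $\ccL^{ijk}$ and $\ccR^{ijk}$. That is, one needs
\[
(Q^{ij},Q^{ik}),\,(R^{ij},R^{ik}),\,(S^{ij},S^{ik})\notin\ccL^{ijk}
\quad\text{and}\quad
(Q^{ik},Q^{jk}),\,(R^{ik},R^{jk}),\,(S^{ik},S^{jk})\notin\ccR^{ijk}.
\]
This is exactly how the lemma is used later: for instance, in the proof of Lemma~\ref{lem:union} the authors use that $(Q^{k\ell},Q^{k'\ell})$ and $(R^{km},R^{k'm})$ are not exceptional right cherries, and these are within-transversal pairs, not mixed ones.

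Your edge-deletion trick only controls the cherries that actually sit inside the inhabited edges $Q^{ij}R^{ik}S^{jk}$, namely the mixed pairs $(Q^{ij},R^{ik})$ and $(R^{ik},S^{jk})$. It says nothing about, e.g., $(Q^{ij},Q^{ik})$ or $(S^{ik},S^{jk})$, because those pairs never appear together in an edge whose presence or absence you control. So the conclusion you obtain is strictly weaker than what the lemma asserts, and the gap cannot be closed by adjusting $\mu'$. The paper's proof circumvents this by passing to a random preimage with vertex classes of bounded size $\ell$: a union bound over the $O(\ell^2)$ cherries per constituent then kills \emph{all} potential cherries simultaneously, so that any transversals subsequently found by Theorem~\ref{lem:nullpaper} automatically satisfy the within-transversal avoidance conditions.
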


For the proof of Lemma~\ref{lem:inhabitedtransversals} we will consider random preimages of reduced hypergraphs. 

\vbox{\begin{definition}\sl
\label{def:randompre}
Given a reduced hypergraph~$\cA$ with index set~$I$ and vertex classes~$\cP^{ij}$
for $ij\in I^{(2)}$, and given an integer $\l\geq 1$,
we fix~$\binom{|I|}{2}$ mutually disjoint sets~$\cPb^{ij}$ of size $\l$
and consider the uniform probability space $\fA(\cA,\l)$ of all 
mappings~$h$ from $\bigcup_{ij\in I^{(2)}}\cPb^{ij}$ to~$\bigcup_{ij\in I^{(2)}}\cP^{ij}$ satisfying 
\[
	h(\cPb^{ij})\subseteq \cP^{ij}
\]
for every $ij\in I^{(2)}$.

With each such map $h$ we associate a reduced hypergraph~$\cAh$ with index set~$I$ and vertex classes~$\cPb^{ij}$ for $ij\in I^{(2)}$
whose edges are defined by 
\[
	\Pb^{ij}\Pb^{ik}\Pb^{jk}\in E(\cAh^{ijk})\quad \Longleftrightarrow\quad
	h(\Pb^{ij})h(\Pb^{ik})h(\Pb^{jk})
	\in 
	E(\cA^{ijk})
\]
for all $ijk\in I^{(3)}$ and all $\Pb^{ij}\in\cPb^{ij}$, 
$\Pb^{ik}\in\cPb^{ik}$, and $\Pb^{jk}\in\cPb^{jk}$.
\end{definition}}

Notice that in this situation $h$ is a hypergraph homomorphism from $\cAh$ to $\cA$.
Below we pass to such a random preimage~$\cAh$ of $\cA$
for sufficiently large $\l$, which will allow us to deduce Lemma~\ref{lem:inhabitedtransversals} for $\cA$ by applying Theorem~\ref{lem:nullpaper}
to $\cAh$.

\begin{proof}[Proof of Lemma~\ref{lem:inhabitedtransversals}]
    Given $t \in \mathds{N}$ and $\mu > 0$,
    let $t_1$ be sufficiently large for an application of Theorem~\ref{lem:nullpaper} with $t$ and $\tfrac{\mu}{2}$ in place of $t$ and $\mu$.
    Further, we fix an integer $\ell$ and $\mu' > 0 $ to satisfy the hierarchy
    \begin{equation*}
          \mu, t_1^{-1} \gg \ell^{-1} \gg \mu'\,,
    \end{equation*}
    where we write $a\gg b$ to indicate that $b$ can be chosen sufficiently small depending on $a$.
    Finally, let $\cA$ be a reduced hypergraph as in the statement of Lemma~\ref{lem:inhabitedtransversals}. We may assume that its index set~$I$ is of size~$t_1$.
            
     Similar as in the proof of~\cite{Christiansurvey}*{Lemma~4.2} we consider the probability space $\fA(\cA,\l)$ from Definition~\ref{def:randompre}
    and we shall prove that with high probability the associated reduced hypergraph~$\cA_h$ is~$(\tfrac{\mu}{2},\vvv)$-dense and no cherry has its image in the sets~$\ccL^{ijk}$ or~$\ccR^{ijk}$.
    
    For every constituent $\cA^{ijk}_h$
    the random variable $e(\cA^{ijk}_h)$ satisfies 
	$\mathds E[e(\cA^{ijk}_h)]\ge\mu\ell^3$ and by Azuma's inequality, applied in the version 
	from~\cite{randomgraphs}*{Corollary~2.27},  we obtain
    \begin{align*}
    \PP\big( \cA_h \text{ is not } (\tfrac{\mu}{2},\vvv)\text{-dense} \big)
    \leq 
    \sum_{ijk \in I^{(3)}}\PP\big(e(\cA^{ijk}_h)<\tfrac{\mu}{2}\ell^3\big)
    \leq 
    \binom{t_1}{3} \exp \bigl(- \tfrac{\mu^2\ell}{24}\bigr)\,.
    \end{align*}
    Moreover, since~$\ccL^{ijk}\leq \mu'\vert \cP^{ij}\vert \vert \cP^{ik}\vert$, the probability that the image of some cherry lies in those sets  is bounded by
    \begin{align*}
        \sum_{ijk \in I^{(3)}} \mathds{P}\Big(h(\Pb^{ij})h(\Pb^{ik})\in \ccL^{ijk} \text{ for some } \Pb^{ij}\Pb^{ik} \in \cPb^{ij} \times \cPb^{ik}\Big) \leq \binom{t_1}{3} \mu' \ell^2\,.
    \end{align*}
    The same inequality holds for the sets~$\ccR^{ijk}$ and our choice of parameters
    ensures 
    \[
    	\binom{t_1}{3}\exp \bigl(-\tfrac{\mu^2\ell}{24}\bigr) + 2 \binom{t_1}{3} \mu'\ell^2< 1\,.
	 \]
    Therefore, we can fix an~$h$ such that~$\cA_{h}$
    is $(\tfrac{\mu}{2},\vvv)$-dense and no cherry has its image in the sets~$\ccL^{ijk}$ or $\ccR^{ijk}$.

    Applying Theorem~\ref{lem:nullpaper} to $\cA_h$ yields a set~$I_\star\subseteq I$ 
    of size~$t$ and a triple of transversals~$\cQ_h(I_\star)\cR_h(I_\star)\cS_h(I_\star)$ 
    inhabited in $\cA_h$.
    It is easy to see that the transversals 
    \[
    	\cQ(I_{\star})=h\bigl(\cQ(I_{\star})\bigr)\,,
		\quad
		\cR(I_{\star})=h\bigl(\cR(I_{\star})\bigr)\,,\qand
		\cS(I_{\star})=h\bigl(\cS(I_{\star})\bigr)
	\]
   are as required.
\end{proof}

\subsection{Partite versions}

We will also need a slightly more involved variant of Theorem~\ref{lem:nullpaper}, 
which guarantees the existence of inhabited triples of transversals in the intersection of multiple $\vvv$-tridense reduced subhypergraphs.

\begin{lemma}\label{lem:crossinhabited}
For all~$t$, $r \in \mathds N$, $\mu>0$ there is some~$s\in \mathds N$ such that the following is true.
Let~$\mathcal A$ be a reduced hypergraph on index set $I$.
Suppose that we have
\begin{enumerate}[label=\alabel]
    \item disjoint subsets of indices $K, L, M \subseteq I$ each of size~$s$,
    \item sets $X_1,\dots, X_r$ of size $s$, and 
    \item for every~$r$-tuple $\vec x \in \prod_{i\in [r]} X_i$ a $(\mu,\vvv)$-tridense subhypergraph~$\mathcal A_{\vec x}\subseteq \mathcal A$ on~$K,L,M$.
\end{enumerate}

Then, there are
\begin{enumerate}[label=\rmlabel]
    \item subsets $K_\star\subseteq K,L_\star\subseteq  L, M_\star\subseteq M$ of size~$t$,
    \item subsets $Y_i\subseteq X_i$ of size~$t$ for every~$i\in [r]$, and 
    \item a triple of transversals $\cQ(K_\star,L_\star)\cR(K_\star, M_\star)\cS(L_\star, M_\star)$, which is inhabited in~$\cA_{\vec y}$ for every~$\vec y\in \prod_{i\in [r]} Y_i$.
\end{enumerate}
\end{lemma}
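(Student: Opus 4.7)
The plan is to deduce Lemma~\ref{lem:crossinhabited} from Theorem~\ref{lem:nullpaper} (or rather, a partite analogue of it) by constructing an auxiliary reduced hypergraph $\cB$ whose index set combines $K$, $L$, $M$ with the parameter sets $X_1,\dots,X_r$, so that a single inhabited triple of transversals in $\cB$ simultaneously encodes the refined sets $K_\star,L_\star,M_\star,Y_1,\dots,Y_r$ and the uniform inhabitation condition.

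First I would fix a hierarchy $s\gg s_r\gg\ldots\gg s_0\gg t,r,\mu^{-1}$ and work on the linearly ordered index set $\tilde I=X_1\sqcup\cdots\sqcup X_r\sqcup K\sqcup L\sqcup M$, where the $X_i$-blocks come first. The vertex classes of $\cB$ would be the original classes $\cP^{k\ell}$, $\cP^{km}$, $\cP^{\ell m}$ on cross pairs of types $K\times L$, $K\times M$, $L\times M$, and singleton classes on every other pair. This ensures that density requirements on non-cross triples are automatically satisfied. On the cross triples $(k,\ell,m)\in K\times L\times M$ I would include $(P^{k\ell},P^{km},P^{\ell m})$ in $E(\cB^{k\ell m})$ according to a \emph{heaviness} criterion, namely whenever $(P^{k\ell},P^{km},P^{\ell m})\in E(\cA_{\vec x}^{k\ell m})$ for at least a $\mu/2$-fraction of $\vec x\in\prod_iX_i$. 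Two applications of Markov's inequality to the average tridensity then show that $\cB$ remains $(\mu',\vvv)$-tridense on $K,L,M$ with $\mu'=\mu^2/4$. On triples involving at least one $X_i$-index, edges would be defined so that a transversal selection on the $X_i$-blocks of $\cB$ corresponds to choosing $Y_i\subseteq X_i$ of size $t$ each such that the already-selected cross transversal remains heavy when restricted to $\prod_iY_i$; iterating Markov's inequality across the $r$ coordinates yields the required density on these mixed triples after losing a further $\mu^{O(r)}$ factor.

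Applying a partite, multi-block version of Theorem~\ref{lem:nullpaper} (which can be obtained by extending $\cB$ trivially on non-cross triples via singleton classes, invoking Theorem~\ref{lem:nullpaper}, and then using the random preimage construction of Definition~\ref{def:randompre} together with a Ramsey-coloring refinement to balance the block sizes, as in the proof of Lemma~\ref{lem:inhabitedtransversals}) to $\cB$ then simultaneously produces the desired subsets $K_\star,L_\star,M_\star,Y_1,\dots,Y_r$ of size $t$ and an inhabited triple of transversals $\cQ(K_\star,L_\star)\cR(K_\star,M_\star)\cS(L_\star,M_\star)$ in $\cB$. By the heaviness criterion and the compatibility edges over the $X_i$-blocks, this triple is automatically inhabited in $\cA_{\vec y}$ for every $\vec y\in\prod_iY_i$.

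The main obstacle I expect is the coupling between the tripartite $K\times L\times M$-structure and the $r$-fold product $\prod_iX_i$: a naive pigeonhole over the $s^r$ tuples fails, since the number of possible output structures grows polynomially in $s$ through $\binom{s}{t}$-factors. This is circumvented by packaging the $X_i$-choices directly into the index set of $\cB$, so that Theorem~\ref{lem:nullpaper} extracts the subsets $Y_i$ in the same step as $K_\star,L_\star,M_\star$; the non-trivial part is verifying that the mixed-triple density conditions on $\cB$ survive the iterated Markov averaging across the $r$ coordinates, which forces the hierarchy $s\gg s_r\gg\cdots\gg s_0$ to be chosen with $s_i$ growing sufficiently fast in $\mu^{-1}$ at each step.
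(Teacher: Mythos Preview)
Your proposal has a genuine gap at the coupling step between the tripartite transversal and the parameter product. The heaviness criterion on cross triples $(k,\ell,m)$ only ensures that each \emph{individual} edge $Q^{k\ell}R^{km}S^{\ell m}$ of an inhabited transversal in $\cB$ lies in $E(\cA_{\vec x})$ for a $\mu/2$-fraction of tuples $\vec x$; it does not force the $t^3$ such edges to lie simultaneously in $E(\cA_{\vec y})$ for any common box $\prod_i Y_i$, and a union bound over $t^3$ constraints fails unless $t=1$. Your proposed remedy---packing the $X_i$ into the index set of $\cB$ with singleton classes on mixed pairs---does not rescue this. With singleton classes the transversal carries no information on those pairs, so the constraint would have to be encoded entirely in the mixed-constituent edge sets; but those must be fixed before any transversal is chosen, making the phrase ``the already-selected cross transversal remains heavy'' circular. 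Even granting some non-circular definition, for a mixed triple $x<k<\ell$ the inhabited condition from Theorem~\ref{lem:nullpaper} constrains $S^{k\ell}$, whereas on the cross triple $k<\ell<m$ the relevant vertex in $\cP^{k\ell}$ is $Q^{k\ell}$, a different transversal. There is no evident way to encode the $r$-dimensional box structure $\prod_i Y_i$ inside the $3$-uniform constituent structure of a reduced hypergraph.

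The paper proceeds quite differently and avoids this obstacle. It first isolates an auxiliary statement (Lemma~\ref{lem24cn}) treating a single bipartite family $\{\cP^{k\ell}\colon (k,\ell)\in K\times L\}$ together with the product $\prod_i X_i$: for arbitrary $t$-subsets $K',L'$ an averaging argument fixes a $(K',L')$-transversal $\cQ$, and the set of tuples $\vec x$ compatible with $\cQ$ is then an $r$-partite $r$-uniform hypergraph of density at least $\mu^{t^2}$, to which Erd\H os's theorem on complete $r$-partite subhypergraphs applies directly to extract the box $\prod_i X'_i$. Lemma~\ref{lem:crossinhabited} follows from three successive applications of Lemma~\ref{lem24cn}, building $\cQ$, $\cR$, $\cS$ one at a time and treating the remaining index block ($M$, then $L$, then $K$) as an additional parameter $X_{r+1}$ at each step. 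The point is that the $r$-fold product structure is handled by the $r$-uniform Erd\H os theorem, not by the $3$-uniform machinery of Theorem~\ref{lem:nullpaper}.
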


The proof of Lemma~\ref{lem:crossinhabited} relies on three successive 
applications of the following auxiliary lemma.

\begin{lemma}\label{lem24cn}
For all~$t$, $r \in \mathds N$, $\mu>0$ there is some~$s\in \mathds N$ such that the following is true.
Let~$\mathcal A$ be a reduced hypergraph on index set $I$.
Suppose that we have
\begin{enumerate}[label=\alabel]
    \item disjoint subsets of indices $K, L \subseteq I$ each of size~$s$,
    \item sets $X_1,\dots, X_r$ of size $s$, and 
    \item\label{it:lem24cn-c} for every~$r$-tuple $\vec x \in \prod_{i\in [r]} X_i$, every $k  \in K$, and every $\l \in L$ a subset $\cP_{\vec x}^{kl} \subseteq \cP^{kl}$ of size at least $\mu |\cP^{k \l}|$.
\end{enumerate}

Then, there are
\begin{enumerate}[label=\rmlabel]
    \item subsets $K' \subseteq K,L' \subseteq  L$ of size~$t$,
    \item subsets $X'_i\subseteq X_i$ of size~$t$ for every~$i\in [r]$, and 
    \item a transversal $\cQ( K',L')$ such that for 
    		every~$\vec x\in \prod_{i\in [r]}X'_i$ and every $(k, \l) \in K' \times L'$ 
			we have that~$Q^{k \l}\in \cP_{\vec x}^{k \l}$.
\end{enumerate}
\end{lemma}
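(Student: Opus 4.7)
The plan is to reduce the lemma to a standard box-finding (``combinatorial rectangle'') statement for $r$-partite $r$-uniform hypergraphs, after choosing the transversal $\cQ$ by a simple random step. Since the freedom to pick $K'$ and $L'$ as arbitrary $t$-subsets of $K$ and $L$ is free of cost, one may commit to such subsets at the very beginning. This restricts the subsequent analysis to only $t^{2}$ of the vertex classes $\cP^{k\l}$, which is essential for bounding the probabilities below by a constant that does not depend on $s$.

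Having fixed $K' \subseteq K$ and $L' \subseteq L$ of size $t$, the next step is to pick, for each $(k, \l) \in K' \times L'$, a vertex $Q^{k\l} \in \cP^{k\l}$ independently and uniformly at random. Assumption~\ref{it:lem24cn-c} together with the independence of these choices gives, for every $\vec x \in \prod_{i\in [r]} X_i$, the estimate
\[
    \PP\bigl[\,Q^{k\l} \in \cP_{\vec x}^{k\l}\text{ for all }(k, \l) \in K' \times L'\,\bigr] \geq \mu^{t^2}\,.
\]
Hence the expected number of ``good'' $r$-tuples $\vec x$ (those satisfying the event above) is at least $\mu^{t^{2}} s^{r}$. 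Fixing an outcome of the random experiment with at least that many good tuples produces the transversal $\cQ(K', L')$ together with a subset $S \subseteq \prod_{i\in [r]} X_i$ of density at least $\mu^{t^{2}}$, consisting of all good $\vec x$.

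It remains to extract subsets $X'_i \subseteq X_i$ of size $t$ with $\prod_{i\in [r]} X'_i \subseteq S$. Viewing $S$ as an $r$-partite $r$-uniform hypergraph on vertex classes $X_1, \dots, X_r$ of density at least $\mu^{t^{2}}$, the required box is precisely a copy of the complete $r$-partite $r$-uniform hypergraph $K^{(r)}_{t, \dots, t}$. A standard Kővári--Sós--Turán type theorem for $r$-partite $r$-graphs, provable by a short random sampling or induction argument, yields such a copy as soon as $s$ is large enough in terms of $t$, $r$, and $\mu$; this fixes the value of $s$ demanded by the lemma. The proof encounters no serious obstacle: once $K'$ and $L'$ are fixed early on, the surviving density $\mu^{t^{2}}$ depends only on $t$ and $\mu$ (not on $s$), so the box Ramsey bound applies cleanly without any fixed-point issue.
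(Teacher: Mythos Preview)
Your proposal is correct and follows essentially the same approach as the paper: fix arbitrary $t$-subsets $K'\subseteq K$ and $L'\subseteq L$, use an averaging argument (which you phrase probabilistically, the paper phrases as a sum over transversals) to find a transversal $\cQ(K',L')$ for which the set of good $\vec x$ has density at least $\mu^{t^2}$ in $\prod_i X_i$, and then invoke the Erd\H{o}s box theorem for $r$-partite $r$-uniform hypergraphs to extract the $X'_i$. The only difference is cosmetic---your random choice of $Q^{k\l}$ and expectation bound is the probabilistic restatement of the paper's counting identity $\sum_{\cQ}|\fx(\cQ)|=\sum_{\vec x}\prod_{(k,\l)}|\cP_{\vec x}^{k\l}|$ followed by pigeonhole.
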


\begin{proof}
Given~$t$, $r \in \mathds N, \mu>0$ we fix an integer $s$ such that
\begin{equation}\label{eq:lem24const}
    t,r,\mu^{-1} \ll s\,.
\end{equation}
Let $\cA$ be a reduced hypergraph as in the statement of the lemma and further let $K' \subseteq K$, and $L' \subseteq  L$ be arbitrary subsets of size~$t$.

For every $(K',L')$-transversal $\cQ$ we consider the set
\begin{equation*}
    \fx(\cQ)=\bigg\{ \vec x \in \prod_{i\in [r]} X_i \colon Q^{k \l} \in \cP_{\vec x}^{k \l}\text{ for all } (k, \l) \in K' \times L' \bigg\}\,.
\end{equation*}
Summing over all $(K',L')$-transversals $\cQ$ assumption~\ref{it:lem24cn-c} yields
\begin{equation*}
    \sum_{\cQ} |\fx(\cQ)|
    = 
    \sum_{\vec x \in \prod_{i\in [r]} X_i} \,\, \prod_{(k, \l) \in K'\times L'} 
    	\big|\cP_{\vec x}^{k \l} \big |
    \ge 
    \mu^{t^2} \prod_{(k, \l) \in K'\times L'} \big | \cP^{k \l} \big| 
    	\prod_{i\in [r]} \big | X_i \big|\,.
\end{equation*}
Hence, we can fix a $(K',L')$-transversal $\cQ$ such that
\begin{equation*}
    |\fx(\cQ)| \ge  \mu^{t^2} \prod_{i\in [r]} \big | X_i \big|\,.
\end{equation*}
We may view $\fx(\cQ)$ as an $r$-partite $r$-uniform hypergraph of 
density at least $\mu^{t^2}$ on vertex classes of size $s$. Consequently, 
a result of Erd\H{o}s~\cite{Er64} combined with the hierarchy~\eqref{eq:lem24const}
yields subsets $X'_i\subseteq X_i$ of size~$t$ for every~$i\in [r]$ such that
\begin{equation*}
    \prod_{i\in [r]} X'_i \subseteq \fx(\cQ)\,,
\end{equation*}
which concludes the proof of Lemma~\ref{lem24cn}.
\end{proof}

Next we derive Lemma~\ref{lem:crossinhabited}.

\begin{proof}[Proof of Lemma \ref{lem:crossinhabited}]
Given~$t$, $r \in \mathds N, \mu>0$ we fix integers $s$, $s'$, and $s''$ such that
\begin{equation*}
    t,r,\mu^{-1} \ll s'' \ll s' \ll s
\end{equation*}
and let $\cA$ be a reduced hypergraph as in the statement of the lemma.
We will prove the lemma by applying Lemma \ref{lem24cn} three times, once for every pair from $K$, $L$, and $M$.

\smallskip

{\bf First step.} For every $k \in K$, $\l \in L$, $m \in M$, and every  $\vec x \in \prod_{i\in [r]} X_i$
we consider the set
\begin{equation*}
    \cP_{(\vec x,m)}^{k \l}
    = 
    \Big \{ P^{k \l} \in \cP^{k \l} \colon 
    	\big|N_{\cA_{\vec x}^{k \l m}}(P^{k \l})\big| 
		\ge 
		\tfrac{\mu}{2} |\cP^{k m}||\cP^{ \l m}| 
	\Big \}\,.
\end{equation*}
Since $\cA_{\vec x}$ is $(\mu,\vvv)$-tridense, we have 
\begin{equation*}
    e(A_{\vec x}^{k \l m})
    \geq
    \mu |\cP^{ k \l}| |\cP^{ k m}| |\cP^{ \l m}|\,,
\end{equation*}
and a standard counting argument implies
\begin{equation*}
    \big|\cP_{(\vec x,m)}^{k \l}\big|
    \ge 
    \tfrac{\mu}{2} \big|\cP^{ k \l}\big|\,.
\end{equation*}
Lemma \ref{lem24cn} applied with $s'$, $r+1$, and $\tfrac{\mu}{2}$
in place of $t$, $r$, and $\mu$ and with $X_{r+1}=M$  
yields $s'$-element subsets $K' \subseteq K$, $L' \subseteq  L$, $M' \subseteq  M$, and 
$X'_i\subseteq X_i$ for every~$i\in [r]$ and a transversal $\cQ( K',L')$ such that for 
every~$(\vec x,m) \in\prod_{i\in [r]}X'_i\times M'$ and every $(k, \l)\in K' \times L'$ 
we have that~$Q^{k \l}\in \cP_{(\vec x,m)}^{k \l}$.

\smallskip

{\bf Second Step.} Next we consider for every $k \in K'$, $\l \in L'$, $m \in M'$, and every $\vec x \in \prod_{i\in [r]} X'_i$ the set
\begin{equation*}
    \cP_{(\vec x,\l)}^{k m}
    = 
    \Big \{ P^{k m} \in \cP^{k m} \colon 
    	\big|N_{\cA_{\vec x}^{k \l m}}(Q^{k \l}, P^{k m})\big| 
    	\ge 
    	\tfrac{\mu}{4} |\cP^{\l m}|
	\Big \}\,.
\end{equation*}
By our choice of the transversal $\cQ( K',L')$ we have 
\[
	|N_{\cA_{\vec x}^{k \l m}}(Q^{k \l})|
	\geq
	\tfrac{\mu}{2} |\cP^{k m}||\cP^{ \l m}| 
\]
and, as before, this implies
\begin{equation*}
	|\cP_{(\vec x,\l)}^{k m}| \ge \tfrac{\mu}{4} |\cP^{ k m}|\,.
\end{equation*}
Again, we apply Lemma \ref{lem24cn}, now with $s''$, $r+1$, and $\tfrac{\mu}{4}$
 in place of $t$, $r$, and $\mu$ and with $X'_{r+1}=L'$,
 to reach $s''$-element subsets $K'' \subseteq K'$, $L'' \subseteq  L'$, $M'' \subseteq  M'$, and $X''_i\subseteq X'_i$,  for every~$i\in [r]$
 and a transversal $\cR( K'',M'')$ such that for every~$(\vec x, \l) \in \prod_{i\in [r]}X''_i \times L''$ and every $(k, m) \in K'' \times M''$ we have~$R^{k m}\in \cP_{(\vec x,\l)}^{k m}$.

\smallskip

{\bf Third step.} Last, we consider for every $\l \in L'', k \in K'', m \in M''$, and every $\vec x \in \prod_{i\in [r]} X''_i$ the set
\begin{equation*}
    \cP_{(\vec x, k)}^{\l m}
    = N_{\cA_{\vec x}^{k \l m}}(Q^{k \l}, R^{k m})\,.
\end{equation*}
By our choice of the transversals $\cQ(K'',L'')$ and $\cR(K'',M'')$ we have  
$|\cP_{(\vec x, k)}^{\l m}| \ge \tfrac{\mu}{4} |\cP^{\l m}|$.
The final application of Lemma \ref{lem24cn}, with $t$, $r+1$, and $\tfrac{\mu}{4}$
 in place of $t$, $r$, and $\mu$, yields $t$-sized subsets $K_\star \subseteq K''$, $L_\star \subseteq  L''$, $M_\star\subseteq  M''$, and $Y_i\subseteq X''_i$,  for every~$i\in [r]$, and a transversal $\cS( L_\star,M_\star)$ such that for every~$\vec y \in \prod_{i\in [r]}Y_i$ and every $(k, \l, m) \in K_\star \times L_\star \times M_\star$ we have that $Q^{k \l}R^{k m}S^{\l m}\in E(\cA_{\vec y})$. In other words, the triple 
of transversals $\cQ\cR\cS$ is inhabited in every $\cA_{\seq y}$ with $\seq y\in \prod_{i\in [r]}Y_i$.
\end{proof}

\section{Bicolouring wicked reduced hypergraphs}
\label{pf-reduction}

\subsection{Plan}
This entire section is devoted to the proof of Proposition~\ref{prop:reduction}. 
As the argument is quite long, we would like to commence with a brief 
outline of our strategy. 

\subsubsection{Na\"{i}ve ideas} 
In an attempt to keep this account sufficiently digestible 
we will systematically oversimplify and most claims below will later turn 
out to be true in a metaphorical sense only. 

It might be helpful to know, what the proof of Proposition~\ref{prop:reduction}
does, when the given $\eps$-wicked reduced hypergraph $\cA$ itself possesses   
a bicolouring $\phi$ of the vertices (which might be ``unknown`` to us) 
and to contrast this situation with the general case. 

What can immediately be seen is that in the bicoloured case $\cA$ contains 
many holes, by which we mean that there are many independent 
sets $\Phi\subseteq V(\cA)$ such that for every pair of indices~$ij$ 
we have $|\cP^{ij}\cap\Phi|\ge (1/3+\eps)|\cP^{ij}|$. 
Indeed, there are ``red holes'' consisting of red vertices only and, 
similarly, there are ``blue holes''. For the sake of discussion we will 
pretend that these are the only holes, i.e., that each hole is either red or blue. 

\vbox{In the general case, it might not be clear at first sight that any holes exist, 
but based on the assumption that $\cA$ fails to support a $K_5^{(3)}$ one can 
establish that they do. As a matter of fact, there is  
a fairly flexible method to construct holes and thus one should 
think of the set $\fH$ of all holes in~$\cA$ as having a possibly intricate 
structure. There are three main lemmata in our analysis of $\fH$: 
\begin{enumerate}
	\item[$\bullet$] the transitivity lemma;
	\item[$\bullet$] the union lemma;
	\item[$\bullet$] and the density increment lemma. 
\end{enumerate}}

Let us briefly summarise the content of these three statements.  

{\bf I.} Returning to the bicoloured case, ``being of the same colour'' is an obvious 
equivalence relation on $\fH$, which has two equivalence classes. Moreover, 
if $\phi$ and thus the colouring of the holes is unknown, this equivalence 
relation is definable by saying that two holes are equivalent if and only if they 
intersect each other (substantially) on every vertex class~$\cP^{ij}$. 
When $\cA$ is arbitrary, the relation of intersecting each other in this sense 
is clearly reflexive and symmetric. The aforementioned {\it transitivity lemma}  
ensures that this relation is transitive as well; 
its proof requires some effort.
One can also show that~$\fH$ always consists of exactly two equivalence classes. 

{\bf II.} In the bicoloured case, the union of two red holes is again a red hole and, in 
fact, the class of red vertices is definable as the union of all red holes. 
It turns out that in the general case one can prove the union of two equivalent 
holes to be a hole as well, and this is what the {\it union lemma} asserts. 

{\bf III.} Iterative applications of the union lemma yield two maximal holes, 
namely the unions of the two equivalence classes. In the bicoloured case every 
vertex belongs to one of these maximal holes, but this is not necessary for the 
proof of Proposition~\ref{prop:reduction} to go through. All that matters is that 
for two appropriate holes
\[ 
\text{ most edges with two vertices in one hole have their third vertex
in the other hole.} \tag{$*$}
\]
The density increment lemma states that if two holes violate $(*)$, 
then there are two other holes covering more space. Thus iterative 
applications of this lemma show 
that the maximal holes satisfy $(*)$. 
 
Notice that if we managed to arrive at two holes satisfying~$(*)$, 
then the proof of Proposition~\ref{prop:reduction} could be completed by deleting the 
vertices not belonging to them. 

\subsubsection{A more realistic picture} Let us now point to two deficiencies 
of the foregoing outline. First, we will never show that the given reduced 
hypergraph $\cA$ contains a hole containing no edges at all. All we need and prove is that there 
are large sets inducing very few edges in $\cA$ and thus there will be 
parameters $\mu$, $\nu$, etc.\ quantifying how accurate our holes are. 
Second, each step of the argument 
is accompanied by a significant loss of the relevant part of the index set. 
Thus the number of times we apply our key lemmata needs to be bounded by 
a function of $\eps$ and, therefore, we will never reach holes 
that are maximal in the absolute sense. All that can realistically be said is that 
there are two holes which cannot be enlarged by a substantial amount, 
and for this reason we adopt the somewhat indirect density increment formulation 
of the third main lemma.
    
\subsubsection{Organisation} In~\S\ref{sec:holes}
and~\S\ref{sec:inh-trans} we deal with general properties of $\ee$-dense 
reduced hypergraphs not supporting a $K_5^{(3)}$, including the existence 
of holes. 
The main result of~\S\ref{Equivalent holes} is the transitivity lemma, 
a precise version of which will be stated as Lemma~\ref{lem:transitivity}. 
Next, the union lemma is obtained in~\S\ref{sec:union} (see Lemma~\ref{lem:union}). 
The proof of the density increment lemma (Lemma~\ref{lem:bad}) requires some 
preparations provided in~\S\ref{sec:hdtt}, while the proof itself is given 
in~\S\ref{sec:bicolourisation}. 
Finally, we argue in~\S\ref{ssec:pf-reduction} that despite the approximate nature 
of the arguments provided so far the proof of Proposition~\ref{prop:reduction} 
can be completed by taking a random preimage.
 
\subsection{Holes and links in reduced hypergraphs}
\label{sec:holes}
Given a reduced hypergraph $\cA$ with index set~$I$, a natural definition 
of a \emph{hole} across a subset of indices $J\subseteq I$ and subsets of 
vertices $\Phi^{ij}\subseteq \cP^{ij}$
for $ij\in J^{(2)}$ would maybe require that 
for every $ijk\in J^{(3)}$ the sets 
$\Phi^{ij}$, $\Phi^{ik}$, $\Phi^{jk}$ span no hyperedges
in $\cA^{ijk}$. However, this notion is too restrictive for our analysis 
and we shall only require that these sets induce hypergraphs of low density.

\begin{definition}\label{def:hole}\sl
Given a reduced hypergraph~$\cA$ and a subset of indices~$J\subseteq I$ we say that a subset of vertices~$\Phi\subseteq V(\cA)$ is a~\emph{$\mu$-hole on $J$} if $\Phi^{ij}=\Phi\cap \cP^{ij}$ is nonempty for all $ij\in J^{(2)}$ 
and
\[
	e(\Phi^{ij}, \Phi^{ik}, \Phi^{jk}) \leq \mu \vert \cP^{ij} \vert\vert \cP^{ik} \vert\vert \cP^{jk} \vert
\]
for every~$ijk\in J^{(3)}$.

The \emph{size} of the hole is $|J|$ and the smallest $\vsigma>0$ such that 
$|\Phi^{ij}|\geq \vsigma |\cP^{ij}|$ for every $ij\in J^{(2)}$ is called the \emph{width} of the hole. 
We refer to $\mu$-holes with width at least $\vsigma$ as $(\mu, \vsigma)$-holes.
\end{definition}

Roughly speaking, for the proof of Proposition~\ref{prop:reduction} we shall find two almost disjoint holes with widths bigger than $1/3$ on a large set of indices in a wicked reduced hypergraph.

Holes may induce a few hyperedges. However, cherries that are contained in too many 
such hyperedges are considered to be exceptional. This leads to the following definition.

\begin{definition}\label{def:exceptional-cherries}\sl
	Given a $\mu$-hole $\Phi$ on $J$, $\eps>0$, and~$ijk$ in~$J^{(3)}$
	a cherry~$(P^{ij}, P^{ik})\in \Phi^{ij}\times \Phi^{ik}$ 
	is \emph{$\eps$-exceptional} if 
	\[
		\big|N(P^{ij}, P^{ik})\cap \Phi^{jk}\big|
		\geq 
		\eps \big|\cP^{jk}\big|\,.
	\]
	For indices~$i<j<k$ in~$J$ we denote by 
	\[
		\ccL^{ijk}(\Phi,\eps)\subseteq \cP^{ij}\times\cP^{ik}\,,
		\quad
		\ccM^{ijk}(\Phi,\eps)\subseteq \cP^{ij}\times\cP^{jk}\,,
		\qand
		\ccR^{ijk}(\Phi,\eps)\subseteq \cP^{ik}\times\cP^{jk}\,,
	\]
	the $\eps$-exceptional left, middle, and right cherries and we set
	\[
		\ccL(\Phi,\eps)=\!\bigdcup_{i<j<k}\!\ccL^{ijk}(\Phi,\eps)\,,
		\ \
		\ccM(\Phi,\eps)=\!\bigdcup_{i<j<k}\!\ccM^{ijk}(\Phi,\eps)\,,
		\ \tand\
		\ccR(\Phi,\eps)=\!\bigdcup_{i<j<k}\!\ccR^{ijk}(\Phi,\eps)\,.
	\]
	\end{definition}
It is easy to see that holes can only contain few exceptional cherries. More precisely, 
for every $\mu$-hole $\Phi$ on $J$ and every $\eps>0$ we have for all $i<j<k$ in $J$
\[
    	\eps\,\vert \cP^{jk}\vert \vert\ccL^{ijk}(\Phi,\eps)\vert 
        \leq 
        e(\Phi^{ij}, \Phi^{ik}, \Phi^{jk})
        \leq 
        \mu\,\vert \cP^{ij}\vert \vert \cP^{ik}\vert \vert \cP^{jk}\vert
\]
and the same reasoning applies to $\ccR$ and $\ccM$. This shows
\begin{multline}
    \label{eq:except-cherries}
	\big|\ccL^{ijk}(\Phi,\eps)\big|
	\leq 
	\frac{\mu}{\eps}\big| \cP^{ij}\big|\big| \cP^{ik}\big|\,,
	\quad
	\big|\ccM^{ijk}(\Phi,\eps)\big|
	\leq 
	\frac{\mu}{\eps}\big| \cP^{ij}\big|\big| \cP^{jk}\big|\,,\\
	\qand
	\big|\ccR^{ijk}(\Phi,\eps)\big|
	\leq 
	\frac{\mu}{\eps}\big| \cP^{ik}\big|\big| \cP^{jk}\big|\,.
\end{multline}

Often we consider holes $\Phi$ arising from neighbourhoods 
$N(P^{ik},P^{jk})$, i.e., for appropriately chosen
$P^{ik}\in\cP^{ik}$ and $P^{jk}\in\cP^{jk}$ we set $\Phi^{ij}=N(P^{ik},P^{jk})$. Note that in $(d,\ee)$-dense reduced hypergraphs, 
holes obtained in this way will automatically have width at least~$d$.

Given a $(K, L)$-transversal~$\cQ$, a subset $K_{\star}\subseteq K$, and an 
index~$\l\in  L$ we define the \emph{$\cQ$-link of~$\l$ on~$K_\star$} 
by
\[
	\Lambda(\cQ, K_\star, \l) = \bigdcup_{kk'\in K_\star^{(2)}} N(Q^{k\l},Q^{k'\l})\,.
\]
The following lemma asserts that in $\ee$-dense reduced hypergraphs that do not 
support~$K_5^{(3)}$ the $\cQ$-links contain large holes.

\begin{lemma}\label{lem:lamda}
Let~$t\in\NN$, $\mu$, $d>0$, let~$\cA$ be a~$(d, \ee)$-dense reduced hypergraph with 
index set~$I$ that does not support a~$K_5^{(3)}$, and for sufficiently large disjoint 
subsets of indices~$K,L\subseteq I$ let~$\cQ$ be a $(K, L)$-transversal. 

Then there exist~$K_\star\subseteq K$ and~$L_\star\subseteq L$ of size~$t$ 
such that for every~$\ell\in L_\star$ the link~$\Lambda(\cQ, K_\star, \ell)$ 
is a~$(\mu,d)$-hole .
\end{lemma}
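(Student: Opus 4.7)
The width condition in the conclusion is free: for every cherry $(Q^{k\ell}, Q^{k'\ell})$ the $(d, \ee)$-density of $\cA$ yields $|N(Q^{k\ell}, Q^{k'\ell})| \ge d|\cP^{kk'}|$, so $\Lambda(\cQ, K_\star, \ell)$ has width at least $d$ for any $K_\star$ and $\ell$. It remains to secure the $\mu$-edge bound.

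Write $\Phi_\ell^{ij} = N(Q^{i\ell}, Q^{j\ell})$ and call a tuple $(k_1, k_2, k_3, \ell) \in K^{(3)} \times L$ \emph{bad} if
\[
	e\bigl(\Phi_\ell^{k_1k_2}, \Phi_\ell^{k_1k_3}, \Phi_\ell^{k_2k_3}\bigr) > \mu\,|\cP^{k_1k_2}||\cP^{k_1k_3}||\cP^{k_2k_3}|\,.
\]
I seek $K_\star\subseteq K$ and $L_\star\subseteq L$ of size $t$ such that $K_\star^{(3)} \times L_\star$ contains no bad tuple, and argue by contradiction. Viewing the bad set as a 4-partite 4-uniform hypergraph on three copies of $K$ and one of $L$, an Erd\H{o}s-type supersaturation bound enforces a dichotomy: either the desired $K_\star, L_\star$ already exist, or, by taking $|K|$ and $|L|$ large enough, there are arbitrarily large subsets $K_0 \subseteq K$ and $L_0 \subseteq L$ such that \emph{every} tuple in $K_0^{(3)} \times L_0$ is bad. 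The remaining work lives in the latter case and will produce a $K_5^{(3)}$ supported by $\cA$, contradicting the hypothesis.

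Split $K_0$ into three equi-sized consecutive blocks $K_0^{(1)} < K_0^{(2)} < K_0^{(3)}$. For each $\ell \in L_0$, the subhypergraph $\cA_\ell \subseteq \cA$ consisting of edges whose three vertices all lie in the $\ell$-hole is $(\mu, \vvv)$-tridense on this tripartition, by the bad-grid property. Applying Lemma~\ref{lem:crossinhabited} with $r = 1$, $X_1 = L_0$, and $\cA_{(\ell)} = \cA_\ell$ produces $t'$-sized $A_\star \subseteq K_0^{(1)}$, $B_\star \subseteq K_0^{(2)}$, $C_\star \subseteq K_0^{(3)}$, $Y_1 \subseteq L_0$ and a triple of transversals $\cQ^\star(A_\star, B_\star)$, $\cR^\star(A_\star, C_\star)$, $\cS^\star(B_\star, C_\star)$ that is inhabited in every $\cA_\ell$ with $\ell \in Y_1$; here $t'$ is at our disposal.

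To conclude, fix $(a, b, c) \in A_\star \times B_\star \times C_\star$ and a pair $\{\ell_1, \ell_2\} \in Y_1^{(2)}$. Seven of the ten edges needed for a $K_5^{(3)}$ on $\{a, b, c, \ell_1, \ell_2\}$ are already in place: the edge on $\{a, b, c\}$ is the inhabited triangle $Q^{\star ab}R^{\star ac}S^{\star bc}$, and each of the six edges $\{k, k', \ell_j\}$ with $\{k, k'\} \subseteq \{a, b, c\}$ and $j \in \{1, 2\}$ follows because the corresponding transversal vertex lies in $\Phi_{\ell_j}^{kk'}$ by the inhabitation in $\cA_{\ell_j}$. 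What remains is a vertex $P^{\ell_1\ell_2} \in \cP^{\ell_1\ell_2}$ with
\[
	P^{\ell_1\ell_2} \in N(Q^{a\ell_1}, Q^{a\ell_2}) \cap N(Q^{b\ell_1}, Q^{b\ell_2}) \cap N(Q^{c\ell_1}, Q^{c\ell_2})\,,
\]
which would supply the three remaining edges $\{k, \ell_1, \ell_2\}$ for $k \in \{a, b, c\}$. By $(d, \ee)$-density each of the three sets has size at least $d|\cP^{\ell_1\ell_2}|$, and a double counting across the many candidate quintuples $(a, b, c, \ell_1, \ell_2) \in A_\star \times B_\star \times C_\star \times Y_1^{(2)}$ delivers one for which the triple intersection is nonempty, producing the sought $K_5^{(3)}$.

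\textbf{Main obstacle.} The delicate step is the last one: forcing a common $P^{\ell_1\ell_2}$ in three neighborhoods of density merely $d$. When $d > 2/3$, inclusion--exclusion shows that \emph{any} quintuple works. For smaller $d$ one must exploit the freedom in $t'$, either via a careful Cauchy--Schwarz style double count over the $\Theta(t'^5)$ quintuples, or by performing a further cleaning of $Y_1$ through a second application of Lemma~\ref{lem:crossinhabited} aimed at the $L$-side, so that the triple intersection is guaranteed to be nonempty on at least one configuration.
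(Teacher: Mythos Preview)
Your overall architecture is sound and close to the paper's: reduce to an all-bad grid $K_0^{(3)}\times L_0$, then build a $K_5^{(3)}$ on indices $\{a,b,c,\ell_1,\ell_2\}$. You correctly observe that the inhabited triple from Lemma~\ref{lem:crossinhabited} delivers seven of the ten edges. The gap is the last step, and it is a real one, not a detail to be swept under ``Cauchy--Schwarz''.

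Once $(a,b,c)$ and the transversal vertices on $\{a,b,c\}$ are fixed, the three sets
\[
	N(Q^{a\ell_1},Q^{a\ell_2})\,,\quad N(Q^{b\ell_1},Q^{b\ell_2})\,,\quad N(Q^{c\ell_1},Q^{c\ell_2})
\]
are three arbitrary subsets of $\cP^{\ell_1\ell_2}$ of density at least $d$. For $d\le 2/3$ there is no reason for their intersection to be nonempty, and no amount of averaging over the $(t')^3\binom{t'}{2}$ quintuples helps: for a fixed pair $\ell_1\ell_2$ the adversary can partition $\cP^{\ell_1\ell_2}$ into three blocks and put all $N(Q^{a\ell_1},Q^{a\ell_2})$ into the first, all $b$-neighbourhoods into the second, all $c$-neighbourhoods into the third. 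Since different pairs $\ell_1\ell_2$ live in different vertex classes, these obstructions are independent and cannot be averaged away. Your ``second application of Lemma~\ref{lem:crossinhabited} on the $L$-side'' does not match the hypotheses of that lemma, which requires a tridense structure on triples of indices that you simply do not have on the $L$-side.

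The fix is to reverse the order of the two searches, and this is exactly what the paper does. First secure the vertex $P^{\ell\ell'}$: for a single pair $\ell\ell'$ the pigeonhole $\sum_{k}|N(Q^{k\ell},Q^{k\ell'})|\ge d|K_0|\,|\cP^{\ell\ell'}|$ gives a vertex $P^{\ell\ell'}$ lying in $N(Q^{k\ell},Q^{k\ell'})$ for at least $d|K_0|$ indices $k$; iterating over all $q=\binom{|L_2|}{2}$ pairs (with $|L_2|=\lceil\mu^{-1}\rceil$) shrinks $K_0$ to a set of size $\ge d^{q}|K_0|\ge 3$. Now fix any triple $k,k',k''$ from this remnant; the vertices $P^{\ell\ell'}$ already give the three edges through $\ell,\ell'$ for every pair. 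It remains to find one edge $R^{kk'}R^{kk''}R^{k'k''}\in E(\cA^{kk'k''})$ lying in the holes $\Lambda(\cQ,\{k,k',k''\},\ell)$ for two values of $\ell$. Here the paper uses a slick pigeonhole you missed: summing the bad-inequality over $\ell\in L_2$ gives more than $|\cP^{kk'}||\cP^{kk''}||\cP^{k'k''}|$ edges in total, hence some edge is counted twice, i.e.\ for two distinct $\ell,\ell'\in L_2$. Your Lemma~\ref{lem:crossinhabited} could replace this last step, but only after the $P^{\ell\ell'}$ have been secured; applied first, it leaves you stranded.
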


\begin{proof}
    Let~$q=\binom{\lceil \mu^{-1}\rceil}{2}$ and 
    set $\Ups^{kk'}_\ell=N(Q^{k\ell},Q^{k'\ell})\subseteq \cP^{kk'}$ for all $kk'\in K^{(2)}$, $\ell\in L$,
    and, similarly, $\Ups^{\ell\ell'}_k=N(Q^{k\ell},Q^{k\ell'})\subseteq \cP^{\l\l'}$ 
    for all $k\in K$, $\ell\ell'\in L^{(2)}$. 
    Consider an auxiliary $2$-colouring
    of the pairs~$(kk'k'',\ell)\in K^{(3)}\times L$ depending on whether
    \begin{align}\label{eq:nothole}
        e\big(\Ups^{kk'}_\ell, \Ups^{kk''}_\ell, \Ups^{k'k''}_\ell\big) 
        > 
        \mu \vert \cP^{kk'}\vert \vert \cP^{kk''}\vert \vert \cP^{k'k''}\vert
    \end{align}
    holds or not.
    Since~$K$ and~$L$ are sufficiently large, the product Ramsey theorem 
    (see, e.g.,~\cite{Ramseybook}*{Theorem~5.1.5}) 
    yields a set~$K_1\subseteq K$ with 
    $|K_1|\ge\max\{3d^{-q},t\}$ and a set~$L_1\subseteq L$ 
    with $|L_1|\geq \max\{\lceil \mu^{-1}\rceil ,t\}$
    such that all pairs $(kk'k'',\ell)\in K_1^{(3)}\times L_1$
    agree whether~\eqref{eq:nothole} holds or not. 
    In fact, if~\eqref{eq:nothole} fails on $K_1^{(3)}\times L_1$, 
    then arbitrary $t$-element subsets $K_\star\subseteq K_1$ 
    and $L_\star\subseteq L_1$ have the desired property.
    Consequently, we may assume that~\eqref{eq:nothole} holds on~$K_1^{(3)}\times L_1$.
    We shall show that this implies $\cA$ supports a $K_5^{(3)}$. 
    
    Let $L_2$ be a subset of $L_1$ of size~$\vert L_2\vert=\lceil \mu^{-1}\rceil$ 
    and consider some~$\ell\ell'\in L_2^{(2)}$.  
    Since we have~$\vert \Ups^{\ell\ell'}_k\vert \geq d\vert \cP^{\ell\ell'} \vert$ 
    for every~$k\in K_1$, there is a subset~$K_2\subseteq K_1$ of size 
    at least~$d\vert K_1\vert$ such that 
    \[
    	\bigcap_{k\in K_2}\Ups^{\ell\ell'}_k\neq\emptyset\,.
    \]
    Repeating this argument iteratively~$q=\binom{|L_2|}2$ times, once for every pair 
    in~$L_2$, we obtain 
    nested subsets~$K_1\supseteq K_2 \supseteq \dots \supseteq K_{q+1}$ such that    
    \begin{align*}
    	\vert K_{q+1}\vert \geq d^q\vert K_1 \vert\geq 3
		\qand
        \bigcap_{k\in K_{q+1}} \Ups^{\ell\ell'}_k\neq \emptyset \ \text{for every}\ \ell\ell'\in L_{2}^{(2)}\,.
    \end{align*}
        The first statement allows us to fix some~$kk'k''\in K_{q+1}^{(3)}$
        and the second one yields for every~$\ell\ell'\in L_2^{(2)}$ 
        a fixed vertex~$P^{\ell\ell'}\in \cP^{\ell\ell'}$ satisfying
        \begin{align}\label{eq:lamdaP1}
            P^{\ell\ell'}Q^{k \ell}Q^{k\ell'},\, 
            P^{\ell\ell'}Q^{k'\ell}Q^{k'\ell'},\, 
            P^{\ell\ell'}Q^{k''\ell}Q^{k''\ell'} 
            \in E(\cA)\,.
        \end{align}

We infer from~\eqref{eq:nothole} and our choice of the size of $L_2$ that
\begin{align*}
	\sum_{\ell\in L_2} e\big(\Ups^{kk'}_\ell, \Ups^{kk''}_\ell, \Ups^{k'k''}_\ell\big)
	 >
	\mu|L_2|\vert \cP^{kk'}\vert \vert \cP^{kk''}\vert \vert \cP^{k'k''}\vert 
	\ge \vert \cP^{kk'}\vert \vert \cP^{kk''}\vert \vert \cP^{k'k''}\vert\,.
\end{align*}
Consequently, there is an edge $R^{kk'}R^{kk''}R^{k'k''}\in E(\cA^{kk'k''})$
and there exist two distinct indices $\l, \ell'\in L_2$ such that both $\lambda\in\{\ell, \ell'\}$ 
satisfy 
\begin{align*}\label{eq:R1}
	R^{kk'}Q^{k\lambda}Q^{k'\lambda},\,
	R^{kk''}Q^{k\lambda}Q^{k''\lambda},\,
	R^{k'k''}Q^{k'\lambda}Q^{k''\lambda}
	&\in E(\cA)\,.
\end{align*}
Together with~\eqref{eq:lamdaP1} we arrive at the contradiction 
that $P^{\l\l'}$, the six 
vertices~$Q^{\kappa\lambda}$ with $\kappa\in \{k,k',k''\}$ and $\lambda\in\{\l,\l'\}$, 
and the three vertices $R^{kk'}$, $R^{kk''}$, $R^{k'k''}$ support a~$K_5^{(3)}$ in~$\cA$.
\end{proof}

Two consecutive applications of Lemma~\ref{lem:lamda} yield a symmetric conclusion.

\begin{cor}\label{cor:lamda}
Let~$t\in\NN$, $\mu$, $d>0$, let~$\cA$ be a~$(d, \ee)$-dense reduced hypergraph with index set~$I$ that does not support a~$K_5^{(3)}$, and for sufficiently large disjoint subsets of indices~$K,L\subseteq I$ let~$\cQ$ be a $(K, L)$-transversal. 

Then there exist~$K_\star\subseteq K$ and~$L_\star\subseteq L$ of size~$t$ 
such that for every~$\ell\in L_\star$ and for every~$k\in K_\star$ the $\cQ$-links 
$\Lambda(\cQ, K_\star, \ell)$ and $\Lambda(\cQ, L_\star, k)$ are $(\mu,d)$-holes.
\end{cor}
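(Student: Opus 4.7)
The plan is to apply Lemma~\ref{lem:lamda} twice, once in each direction, and then verify that the shrinking of the index sets carried out in the second application does not destroy the hole property obtained from the first.

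First, I would apply Lemma~\ref{lem:lamda} to $\cA$, $\mu$, $d$, and $\cQ$ with some large target output size $t'\ge t$, to be specified below. This yields subsets $K_1\subseteq K$ and $L_1\subseteq L$ of size $t'$ such that $\Lambda(\cQ,K_1,\ell)$ is a $(\mu,d)$-hole on $K_1$ for every $\ell\in L_1$.

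Next, I would apply Lemma~\ref{lem:lamda} a second time with the r\^oles of the two index classes interchanged, using the restricted transversal $\cQ|_{K_1\times L_1}$ viewed as an $(L_1,K_1)$-transversal after relabelling. Provided $t'$ was chosen large enough for the hypothesis of Lemma~\ref{lem:lamda} to be satisfied in this reversed setting (this is the only quantitative bookkeeping required), this produces subsets $L_\star\subseteq L_1$ and $K_\star\subseteq K_1$ of size $t$ such that $\Lambda(\cQ,L_\star,k)$ is a $(\mu,d)$-hole on $L_\star$ for every $k\in K_\star$.

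It only remains to check that the conclusion of the first application survives the passage from $K_1$ to the smaller $K_\star$. For $\ell\in L_\star\subseteq L_1$ one has
\[
\Lambda(\cQ,K_\star,\ell)\cap\cP^{kk'}=N(Q^{k\ell},Q^{k'\ell})=\Lambda(\cQ,K_1,\ell)\cap\cP^{kk'}
\]
for every $kk'\in K_\star^{(2)}$, so both the width lower bound $|\Phi\cap\cP^{kk'}|\ge d|\cP^{kk'}|$ and the edge upper bound on triples in $K_\star^{(3)}\subseteq K_1^{(3)}$ are inherited directly from the hole on $K_1$. Hence $\Lambda(\cQ,K_\star,\ell)$ is indeed a $(\mu,d)$-hole on $K_\star$, which completes the argument. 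There is no genuine obstacle here beyond what has already been handled inside the proof of Lemma~\ref{lem:lamda}; the only subtlety is making sure that holes with respect to a larger index set restrict to holes with respect to a smaller one, which is immediate from the definition.
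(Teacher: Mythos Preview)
Your proposal is correct and follows exactly the same two-step strategy as the paper: apply Lemma~\ref{lem:lamda} once to get large $K_1$, $L_1$ with holes in one direction, then apply it again with the r\^oles of the two index classes swapped to the restricted transversal to obtain $K_\star\subseteq K_1$, $L_\star\subseteq L_1$ with holes in the other direction. The only difference is that you spell out explicitly why the first family of holes survives restriction from $K_1$ to $K_\star$, which the paper leaves implicit; as you note, this is immediate from Definition~\ref{def:hole}.
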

\begin{proof}
	For sufficiently large $t'=t'(t,\mu,d)$ a first application of Lemma~\ref{lem:lamda} yields 
	subsets~$K'$ and $L'$ of size at least $t'$ such that~$\Lambda(\cQ, K', \ell)$ is 
	a~$(\mu,d)$-hole for every~$\ell\in L'$. A second application 
	to the restricted transversal $\cQ(K',L')$ (with the r\^oles of~$K$ and~$L$ exchanged)
	then yields subsets $L_{\star}\subseteq L'$ and $K_{\star}\subseteq K'$ of size $t$ such that 
	additionally~$\Lambda(\cQ, L_\star, k)$ is a~$(\mu,d)$-hole for every~$k\in K_\star$.
\end{proof}

\subsection{Intersecting and disjoint links}
\label{sec:inh-trans}

Next we define concepts for pairs of links  having a substantial intersection and of 
being almost disjoint.

\begin{dfn}\sl
	Let~$\cA$ be a reduced hypergraph with index set $I$, let~$K,L,M\subseteq I$ be 
	pairwise disjoint sets of indices,
	and let~$\cQ(K, L)$ and $\cR(K,M)$ be transversals. 
	
	For $\l\in L$ and $m\in M$ we say 
	the links $\Lambda(\cQ,K,\l)$ and $\Lambda(\cR,K,m)$ are \emph{$\delta$-intersecting}
	if 
	\begin{equation}\label{eq:intersecting}
		\big|N(Q^{k\l},Q^{k'\l}) \cap N(R^{km},R^{k'm})\big|
		\ge
		\delta \big|\cP^{kk'}\big|
	\end{equation}
	for all $kk'\in K^{(2)}$.
	If, on the other hand,~\eqref{eq:intersecting} fails for all $kk'\in K^{(2)}$, then we say 
	$\Lambda(\cQ,K,\l)$ and $\Lambda(\cR,K,m)$ are \emph{$\delta$-disjoint}.
	
	Moreover, we say a pair of transversals $\cQ(K, L)\cR(K,M)$ has \emph{$\delta$-intersecting links}
	(resp.\ \emph{$\delta$-disjoint links})
	if $\Lambda(\cQ,K,\l)$ and $\Lambda(\cR,K,m)$ are $\delta$-intersecting (resp.\ $\delta$-disjoint) 
	for every $\l\in L$ and $m\in M$.
\end{dfn}

We remark that the notions of being $\delta$-intersecting and $\delta$-disjoint 
do not complement each other.
However, by means of (the product version of) Ramsey's theorem we can always pass 
to subsets of $K$, $L$, and $M$ for which one of the properties holds (see, e.g., 
the proof of Corollary~\ref{cor:greekleters} below).

The next lemma and its corollary show that in reduced hypergraphs that do not support~$K_5^{(3)}$ 
at most one pair from a triple of inhabited transversals can have an intersecting 
link (see Definition~\ref{def:inhabited}).
 
\begin{lemma}\label{lem:greekleters}
Let~$\delta >0$, let~$\cA$ be a reduced hypergraph with index set~$I$, and for sufficiently large disjoint sets~$K,L,M\subseteq I$ let~$\cQ(K,L)\cR(K,M)\cS(L,M)$ be an inhabited  triple of transversals. 
If both pairs of transversals~$\cQ(K,L)\cR(K,M)$ and~$\cQ(K,L)\cS(L,M)$ have $\delta$-intersecting links,
then~$\cA$ supports a~$K^{(3)}_5$. 
\end{lemma}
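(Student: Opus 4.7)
The plan is to construct a $K_5^{(3)}$ supported by~$\cA$ on five indices of the form $k, k' \in K$, $\ell, \ell' \in L$, $m \in M$. Once we fix some $m \in M$, eight of the ten vertices of the sought clique are supplied directly by the transversals $\cQ$, $\cR$, $\cS$ (namely $Q^{k\ell}, Q^{k\ell'}, Q^{k'\ell}, Q^{k'\ell'}, R^{km}, R^{k'm}, S^{\ell m}, S^{\ell' m}$), whereas the two still missing vertices $P^{kk'} \in \cP^{kk'}$ and $P^{\ell\ell'} \in \cP^{\ell\ell'}$ must be chosen so that the selection conditions
\begin{align*}
    \text{(A)} \quad & P^{kk'} \in N(Q^{k\ell}, Q^{k'\ell}) \cap N(Q^{k\ell'}, Q^{k'\ell'}) \cap N(R^{km}, R^{k'm})\,, \\
    \text{(B)} \quad & P^{\ell\ell'} \in N(Q^{k\ell}, Q^{k\ell'}) \cap N(Q^{k'\ell}, Q^{k'\ell'}) \cap N(S^{\ell m}, S^{\ell' m})
\end{align*}
both hold. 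Together (A) and (B) encode six of the required hyperedges, the remaining four (those indexed by triples of the form $\{k^\star, \ell^\star, m\}$ with $k^\star \in \{k,k'\}$, $\ell^\star \in \{\ell,\ell'\}$) being supplied for free by the inhabitedness of $\cQ\cR\cS$.

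Fix any $m \in M$ and $4$-colour the product $K^{(2)} \times L^{(2)}$ according to the pair of truth values of ``(A) is satisfiable'' and ``(B) is satisfiable'' for~$(kk', \ell\ell')$. Since $K$ and $L$ are sufficiently large, the product Ramsey theorem yields subsets $K_0 \subseteq K$ and $L_0 \subseteq L$, each of size at least $\lceil 2/\delta\rceil$, such that this colouring is constant on $K_0^{(2)} \times L_0^{(2)}$. We aim to rule out every colour other than ``both (A) and (B) satisfiable''; any pair $(kk', \ell\ell')$ in $K_0^{(2)} \times L_0^{(2)}$ together with the corresponding witnesses $P^{kk'}$ and $P^{\ell\ell'}$ then completes the desired $K_5^{(3)}$.

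Suppose, in order to derive a contradiction, that the constant colour records ``(A) fails'' and fix any $kk' \in K_0^{(2)}$. The intersecting-links hypothesis for $\cQ(K,L)$ and $\cR(K,M)$ gives $|N(Q^{k\ell}, Q^{k'\ell}) \cap N(R^{km}, R^{k'm})| \geq \delta|\cP^{kk'}|$ for every $\ell \in L_0$. Summing these inequalities over~$\ell$, rewriting the left-hand side as $\sum_{x \in N(R^{km}, R^{k'm})} |\{\ell \in L_0 : x \in N(Q^{k\ell}, Q^{k'\ell})\}|$, and invoking the trivial bound $|N(R^{km}, R^{k'm})| \leq |\cP^{kk'}|$, pigeonhole yields a vertex $x \in N(R^{km}, R^{k'm})$ that lies in $N(Q^{k\ell}, Q^{k'\ell})$ for at least $\delta|L_0| \geq 2$ distinct values of $\ell \in L_0$. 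Two such values supply a vertex witnessing (A), contradicting the assumed colour. A fully symmetric argument, using the intersecting-links hypothesis for $\cQ(K,L)$ and $\cS(L,M)$ in place of $\cR$, rules out the colour ``(B) fails''. The principal technical point is noticing that this double-counting step is strong enough to promote the pairwise ``cherry-sized'' intersection provided by the hypothesis to a nonempty triple intersection; once this is available, the Ramsey reduction simply binds the $\cQ\cR$-argument and the $\cQ\cS$-argument together.
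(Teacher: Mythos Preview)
Your proof is correct. The underlying mechanism is the same as the paper's --- a pigeonhole/averaging step that upgrades the pairwise intersections in the $\delta$-intersecting hypothesis to a nonempty triple intersection --- but you organise it differently. The paper first fixes a set $K_\star$ of size $\lfloor\delta^{-1}\rfloor+1$ and, by iterating a pigeonhole argument over all pairs $kk'\in K_\star^{(2)}$, shrinks $L$ to a nested sequence $L\supseteq L_1\supseteq\cdots\supseteq L_q$ so that~(A) is witnessed for \emph{every} $kk'\in K_\star^{(2)}$ by the final $\ell,\ell'\in L_q$; only then does it use a single pigeonhole over $K_\star$ to secure~(B) for \emph{some} pair $kk'$. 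Your route instead invokes the product Ramsey theorem up front to pass to $K_0,L_0$ on which the satisfiability pattern of (A) and (B) is constant, after which a single averaging step over $L_0$ (for (A)) and symmetrically over $K_0$ (for (B)) rules out the bad colours. Your version is more symmetric and avoids the nested-set bookkeeping, at the cost of citing product Ramsey where the paper gets by with elementary iteration; both are entirely adequate here.
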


\begin{proof}
Fix~$m\in M$, a subset~$K_\star\subseteq K$ of size $\lfloor \delta^{-1}\rfloor+1$, and~$q=\binom{\lfloor \delta^{-1}\rfloor +1}{2}$.
Consider an arbitrary pair of distinct indices~$k,k'\in K_\star$.
Since $\vert N(Q^{k\ell},Q^{k'\ell})\cap N(R^{km},R^{k'm})\vert \geq \delta \vert\cP^{kk'}\vert$ 
for every~$\ell\in L$, there is a subset~$L_1\subseteq L$ of size at least~$\delta \vert L \vert$ such that
\begin{align}\label{eq:sharer}
    \bigcap_{\ell \in L_1} N(Q^{k\ell},Q^{k'\ell})\cap N(R^{km},R^{k'm})\neq \emptyset\,.
\end{align} 
As~the pair~$kk'$ was taken arbitrarily, we can repeat the argument iteratively~$q$ times (once for every pair in~$K_\star^{(2)}$) and find nested subsets~$L\supseteq L_1\supseteq L_2 \supseteq \dots \supseteq L_q$ such that~\eqref{eq:sharer} with~$L_1$ replaced by $L_q$ holds for every~$kk'\in K^{(2)}_\star$.

Moreover, we have $|L_q|\geq \delta^q|L|$ and,
since~$L$ is sufficiently large, this yields~$|L_q|\geq 2$ 
and we can select $\ell\ell'\in L^{(2)}_q$. 
Owing to~\eqref{eq:sharer} with $L_1$ replaced by $L_q$, 
for every~$kk'\in K_\star^{(2)}$ there is a vertex~$P^{kk'}\in \cP^{kk'}$ such that
\begin{align}\label{eq:P1}
 P^{kk'}Q^{k\ell}Q^{k'\ell},\, 
 P^{kk'}Q^{k\ell'}Q^{k'\ell'},\, 
 P^{kk'}R^{km}R^{k'm}\in E(\cA)\,.
\end{align}

Next, since~$\cQ(K, L)\cS(L, M)$ has $\delta$-intersecting links 
and~$|K_\star|>\delta^{-1}$, there exists a pair~$kk'\in K^{(2)}_\star$ such that
\[
	N(Q^{k\ell},Q^{k\ell'})\cap N(Q^{k'\ell},Q^{k'\ell'}) \cap N(S^{\ell m},S^{\ell'm})
	\neq \emptyset\,.
\]
Therefore, there is a vertex
$P^{\ell\ell'} \in  \cP^{\ell\ell'}$ such that
\begin{equation}\label{eq:P2}
 P^{\ell\ell'}Q^{k\ell}Q^{k\ell'},\, 
 P^{\ell\ell'}Q^{k'\ell}Q^{k'\ell'},\, 
 P^{\ell\ell'}S^{\ell m}S^{\ell'm}\in E(\cA)\,. 
\end{equation}

Finally, since $\cQ(K, L)\cR(K, M)\cS(L, M)$ is inhabited,
we have 
\begin{equation}\label{eq:P3}
 Q^{k\ell}R^{k m}S^{\ell m},\,
 Q^{k\ell'}R^{k m}S^{\ell' m},\,
 Q^{k'\ell}R^{k' m}S^{\ell m},\,
 Q^{k'\ell'}R^{k' m}S^{\ell' m} 
 \in E(\cA)\,. 
\end{equation}
Altogether the ten hyperedges provided by~\eqref{eq:P1}\,--\,\eqref{eq:P3}
show that the 
vertices $P^{kk'}$, $P^{\l\l'}$, together with $Q^{k\l}$, $Q^{k\l'}$, 
$Q^{k'\l}$, $Q^{k'\l'}$, $R^{km}$, $R^{k'm}$, and~$S^{\l m}$, $S^{\l'm}$ support 
a $K_5^{(3)}$ on the five indices $k$, $k'$, $\l$, $\l'$, and $m$.
\end{proof}

By means of the product Ramsey theorem (see, e.g.,~\cite{Ramseybook}*{Theorem 5.1.5})
we can move from at most one pair with intersecting links 
(given by Lemma~\ref{lem:greekleters})
to at least two pairs with essentially disjoint links.

\vbox{
\begin{cor}\label{cor:greekleters}
Let $t\in\NN$,~$\delta >0$, let~$\cA$ be a reduced hypergraph with index set~$I$ that does not support $K_5^{(3)}$, and let~$\cQ(K, L)\cR(K, M)\cS(L, M)$ be an inhabited triple of transversals
for sufficiently large disjoint sets~$K$, $L$, $M\subseteq I$.

Then there exist subsets~$K_\star\subseteq K$, $L_\star\subseteq L$, and~$M_\star\subseteq M$ each of size~$t$
such that at most one pair of restricted transversals~$\cQ(K_\star, L_\star)\cR(K_\star, M_\star)$, 
$\cQ(K_\star, L_\star)\cS(L_\star, M_\star)$, $\cR(K_\star, M_\star)\cS(L_\star, M_\star)$ has 
$\delta$-intersecting links and all other pairs have~$\delta$-disjoint links.
\end{cor}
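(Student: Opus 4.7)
The plan is to combine Lemma~\ref{lem:greekleters} with three applications of the product Ramsey theorem, one for each pair of transversals. First I would fix an integer $t_0 \ge t$ large enough for Lemma~\ref{lem:greekleters} to apply with the same $\delta$, and impose the hidden lower bound on $|K|, |L|, |M|$ so that three successive shrinkings via product Ramsey still leave subsets of size~$t_0$.

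Next I would observe that each of the three pairs of transversals carries a natural binary colouring recording whether condition~\eqref{eq:intersecting} holds. For the pair $\cQ(K, L)\cR(K, M)$ the relevant parameter is a tuple $(kk', \l, m) \in K^{(2)} \times L \times M$, where the colour depends on whether $|N(Q^{k\l},Q^{k'\l}) \cap N(R^{km},R^{k'm})| \ge \delta|\cP^{kk'}|$ or not. The product Ramsey theorem (in the form handling tuples from $K^{(2)} \times L \times M$) yields large subsets $K_1 \subseteq K$, $L_1 \subseteq L$, $M_1 \subseteq M$ on which this colouring is constant; consequently the pair $\cQ(K_1, L_1)\cR(K_1, M_1)$ is either $\delta$-intersecting or $\delta$-disjoint. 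I would then iterate this procedure for the pairs $\cQ\cS$ and $\cR\cS$ (whose colourings live on $K^{(1)} \times L^{(2)} \times M^{(1)}$ and $K^{(1)} \times L^{(1)} \times M^{(2)}$ respectively), each time restricting to the previously obtained subsets. This produces $K_\star \subseteq K$, $L_\star \subseteq L$, $M_\star \subseteq M$ of size $t_0$ such that each of the three restricted pairs has either $\delta$-intersecting or $\delta$-disjoint links.

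To finish, I would argue by contradiction: suppose two of the three pairs have $\delta$-intersecting links. Since any two of the three pairs share a common transversal, after a suitable relabelling of the triple $(K_\star, L_\star, M_\star)$ and the roles of $\cQ, \cR, \cS$ the hypothesis of Lemma~\ref{lem:greekleters} is met. That lemma then yields a $K_5^{(3)}$ supported in $\cA$, contradicting our assumption. Hence at most one pair can be $\delta$-intersecting, and the remaining pairs are $\delta$-disjoint, which gives the desired conclusion (after passing to arbitrary $t$-element subsets if $t < t_0$).

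The main obstacle is essentially bookkeeping: verifying that the symmetry argument for Lemma~\ref{lem:greekleters} genuinely covers all three possible pairs (which it does, because each pair of transversals shares a common one), and tracking how the lower bounds on $|K|, |L|, |M|$ must degrade through the three Ramsey steps. Neither issue poses any real difficulty once the dichotomy setup is in place.
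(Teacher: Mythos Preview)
Your proposal is correct and matches the paper's proof essentially line for line: three successive product Ramsey applications (one per pair of transversals, on the appropriate product $K^{(2)}\times L\times M$, $L^{(2)}\times K\times M$, $M^{(2)}\times K\times L$) followed by an appeal to Lemma~\ref{lem:greekleters}. Your explicit introduction of $t_0$ and your remark that the symmetry of the three pairs reduces any two $\delta$-intersecting pairs to the hypothesis of Lemma~\ref{lem:greekleters} are points the paper leaves implicit, but the argument is the same.
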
}

\begin{proof}
Define a $2$-colouring on the triples~$(kk',\ell,m)\in K^{(2)}\times L\times M$ depending on whether~$N(Q^{k\ell}, Q^{k'\ell})\cap N(R^{k m}, R^{k' m}) \geq \delta \vert \cP^{k k'}\vert$ or not. 
Since~$K$, $L$, and $M$ are large enough, we can deduce from the product Ramsey theorem  that there exist large subsets~$K_1\subseteq K$, $L_1\subseteq L$, and~$M_1\subseteq M$ for which the pair of restricted transversals~$\cQ(K_1, L_1)\cR(K_1, M_1)$ has~$\delta$-intersecting
or~$\delta$-disjoint links. 

We can repeat this argument and consider the triples in~$L_1^{(2)}\times K_1 \times M_1$ 
to obtain subsets~$K_2\subseteq K_1,L_2\subseteq L_1$, and~$M_2\subseteq M_1$ such that 
the pair~$\cQ(K_2, L_2)\cS(L_2, M_2)$ 
has {$\delta$-intersecting} or~$\delta$-disjoint links. 
Observe that these properties are preserved under taking subsets of indices and, hence, 
also the pair~$\cQ(K_2, L_2)\cR(K_2, M_2)$ has~$\delta$-intersecting 
or~$\delta$-disjoint links. 

Repeating the Ramsey argument again yields subsets~$K_\star\subseteq K_2, L_\star\subseteq L_2$, and~$M_\star\subseteq M_2$ such that all pairs of restricted transversals~$\cQ(K_\star, L_\star)$,~$\cR(K_\star, M_\star)$, and~$\cS(L_\star, M_\star)$ have {$\delta$-intersecting} or~$\delta$-disjoint links. 
Since the initial sets~$K$, $L$, and~$M$ are large enough, we argue that~$K_\star$, $L_\star$, and~$M_\star$ can be taken of size at least~$t$.

Finally, applying Lemma~\ref{lem:greekleters} we observe that at most one of those pairs of transversals has 
a~$\delta$-intersecting link, and hence, at least two of them have~$\delta$-disjoint links. 
\end{proof}

Finally, we may combine Corollaries~\ref{cor:lamda} and~\ref{cor:greekleters}. More precisely, after 
an application of Corollary~\ref{cor:greekleters} and three consecutive applications of 
Corollary~\ref{cor:lamda} we arrive at the following statement.

\begin{cor}\label{cor:cleaning}
    Let~$t\in \NN$, $\delta$, $\mu$, $d>0$, let~$\cA$ be a~$\left(d, \ee\right)$-dense 
    reduced hypergraph with index set~$I$ that does not support a~$K_5^{(3)}$,
    and for sufficiently large disjoint sets~$K$, $L$, $M\subseteq I$ let~$\cQ(K,L)\cR(K,M)\cS(L,M)$ be an inhabited triple of transversals.
    
    There exist subsets~$K_\star\subseteq K$,~$\Ls\subseteq L$, and~$M_\star\subseteq M$ of 
    size at least~$t$ such that 
    \begin{enumerate}[label=\rmlabel]
    \item\label{it:cleaning1} at most one pair
    	$\cQ(K_\star, L_\star)\cR(K_\star, M_\star)$, $\cQ(K_\star, L_\star)\cS(L_\star, M_\star)$, 
		$\cR(K_\star, M_\star)\cS(L_\star, M_\star)$ of restricted transversals has~$\delta$-intersecting links
		and all other pairs have~$\delta$-disjoint links
    \item\label{it:cleaning2}  and for every $k\in K_{\star}$, $\l\in L_{\star}$, and $m\in M_{\star}$
		the links $\Lambda(\cQ, K_\star,\l)$, $\Lambda(\cQ, L_\star, k)$, $\Lambda(\cR, K_\star, m)$,
    	$\Lambda(\cR, M_\star, k)$, $\Lambda(\cS, L_\star, m)$, and~$\Lambda(\cS, M_\star,\l)$
    	are $(\mu,d)$-holes.\qed
    \end{enumerate}
\end{cor}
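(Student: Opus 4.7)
The plan is to chain together a single invocation of Corollary~\ref{cor:greekleters} with three successive invocations of Corollary~\ref{cor:lamda}, one for each of the three ordered pairs of index sets among $(K,L)$, $(K,M)$, and $(L,M)$. This exactly matches the strategy suggested by the authors just before the statement. Since each invocation only shrinks the relevant index sets to a bounded extent, it suffices to start with a sufficiently large target size and shrink down through a chain $t_0 \ge t_1 \ge t_2 \ge t_3 \ge t$, where each $t_i$ is chosen large enough to feed the next corollary application.

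Concretely, pick such a chain and then apply Corollary~\ref{cor:greekleters} with $t_0$ in place of $t$ to obtain $K_0\subseteq K$, $L_0\subseteq L$, $M_0\subseteq M$ of size $t_0$ for which property~\ref{it:cleaning1} holds for the restricted transversals $\cQ(K_0,L_0)\cR(K_0,M_0)\cS(L_0,M_0)$. Next apply Corollary~\ref{cor:lamda} to $\cQ(K_0,L_0)$ with parameters $t_1$, $\mu$, $d$ to extract $K_1\subseteq K_0$ and $L_1\subseteq L_0$ of size $t_1$ such that $\Lambda(\cQ,K_1,\l)$ and $\Lambda(\cQ,L_1,k)$ are $(\mu,d)$-holes for all $k\in K_1$ and $\l\in L_1$. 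A second application of Corollary~\ref{cor:lamda} to $\cR(K_1,M_0)$ yields $K_2\subseteq K_1$ and $M_1\subseteq M_0$ of size $t_2$ handling both $\cR$-links, and a third application to $\cS(L_1,M_1)$ yields $L_\star\subseteq L_1$ and $M_\star\subseteq M_1$ of size $t_3\ge t$ handling both $\cS$-links. Finally fix an arbitrary $K_\star\subseteq K_2$ of size $t$.

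It remains to check that all of the desired conclusions survive the nested restrictions. Both the $\delta$-intersecting and the $\delta$-disjoint conditions are universally quantified over the relevant indices (pairs $kk'\in K^{(2)}$ and single indices in $L$ or $M$), so they pass immediately from $(K_0,L_0,M_0)$ to any triple of subsets, which takes care of~\ref{it:cleaning1}. For~\ref{it:cleaning2}, if $\Lambda(\cQ,K_1,\l)$ is a $(\mu,d)$-hole across the index set $K_1^{(2)}$, then on any subset $K'\subseteq K_1$ one has $\Lambda(\cQ,K',\l)\cap\cP^{ij}=N(Q^{i\l},Q^{j\l})=\Lambda(\cQ,K_1,\l)\cap\cP^{ij}$ for every $ij\in K'^{(2)}$, since the union defining $\Lambda$ is disjoint across the vertex classes~$\cP^{kk'}$; therefore both the edge-density bound in Definition~\ref{def:hole} and the width bound $\ge d$ restrict coordinatewise from $K_1^{(2)}$ to $K'^{(2)}$, and similarly for the $\cR$- and $\cS$-links. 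The only real work is the standard bookkeeping needed to choose $t_0\ge t_1\ge t_2\ge t_3\ge t$ and the initial lower bound on $|K|,|L|,|M|$ so that each corollary application is valid; this presents no genuine obstacle since each invoked statement merely requires its input index sets to be "sufficiently large".
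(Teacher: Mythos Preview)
Your proposal is correct and follows exactly the approach the paper indicates: one application of Corollary~\ref{cor:greekleters} followed by three successive applications of Corollary~\ref{cor:lamda}, with the observation that both the $\delta$-intersecting/$\delta$-disjoint dichotomy and the $(\mu,d)$-hole property are hereditary under passing to subsets of the index sets. The bookkeeping with the chain $t_0\gg t_1\gg t_2\gg t_3\ge t$ is handled appropriately.
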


\subsection{Equivalent holes}\label{Equivalent holes}
Roughly speaking, in the next step of the proof of Proposition~\ref{prop:reduction} we show that for wicked reduced hypergraphs (see Definition~\ref{def:wicked}),
the set of holes with width bigger than $1/3$ splits into only two classes defined by $\delta$-intersections.
For that we generalise the notion of being $\delta$-intersecting from links to holes.
\begin{definition}\sl
	Given a reduced hypergraph~$\cA$ with index set $I$, a subset $J\subseteq I$,
	and $\mu$, $\delta>0$,
	we say two $\mu$-holes $\Phi$ and $\Psi$ on $J$ are \emph{$\delta$-intersecting}
	if 
	\begin{equation}\label{eq:intersecthole}
		\big|\Phi^{ij}\cap\Psi^{ij}\big|\ge\delta\big|\cP^{ij}\big|
	\end{equation}
	for all $ij\in J^{(2)}$. If, on the other hand,~\eqref{eq:intersecthole} fails for all $ij\in J^{(2)}$, 
	then we say $\Phi$ and $\Psi$ are \emph{$\delta$-disjoint}.
\end{definition}

For  $\mu>0$ and $\delta\in (0,1]$ the notion of being $\delta$-intersecting defines a reflexive and symmetric relation on 
the $\mu$-holes on $J$. Perhaps somewhat surprisingly, the next lemma shows that 
this relation is also transitive on holes with width bigger than $1/3$ in wicked
reduced hypergraphs,
if one passes to an appropriate subset of~$J$. This justifies the shorthand notation 
\[
	\Phi\equiv_{\delta,J}\Psi
\] 
for $\delta$-intersecting holes on~$J$. Similarly, $\Phi\not\equiv_{\delta,J}\Psi$ 
will indicate that $\Phi$ and $\Psi$ are $\delta$-disjoint on $J$. 
Notice that this statement is stronger than the mere negation 
of $\Phi\equiv_{\delta,J}\Psi$.

\begin{lemma}[Transitivity lemma]\label{lem:transitivity}
	For every~$\eps>0$ there exists~$\mu >0$ such that for every~$t\in\NN$ the 
	following holds. 
	Suppose~$\cA$ is an $\eps$-wicked reduced hypergraph with index set $I$ and 
	for sufficiently large $J\subseteq I$ we are given~$(\mu,1/3+\eps)$-holes~$\Phi$, 
	$\Psi$, and $\Omega$ on $J$. If
	\[
		\Phi\equiv_{\eps,J}\Psi
		\qqand 
		\Psi\equiv_{\eps,J}\Omega\,,
	\] 
	then there is a subset~$\Js\subseteq J$ of size at least~$t$ 
	such that~$\Phi\equiv_{\eps,\Js}\Omega$.
\end{lemma}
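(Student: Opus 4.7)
The strategy is to argue by contradiction: assuming the conclusion fails, we will construct a $K_5^{(3)}$ in $\cA$, contradicting its wickedness. The key tool will be Lemma~\ref{lem:greekleters}.

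\textbf{Ramsey reduction.} Two-colour the pairs $ij \in J^{(2)}$ according to whether $|\Phi^{ij} \cap \Omega^{ij}| \ge \eps|\cP^{ij}|$. Since $J$ is sufficiently large, Ramsey's theorem yields a monochromatic subset of any desired size. In the ``intersecting'' case that subset serves as the desired $\Js$, so we may assume the opposite: $|\Phi^{ij} \cap \Omega^{ij}| < \eps|\cP^{ij}|$ for every pair of a large subset $J' \subseteq J$.

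\textbf{Width bound on $\Psi$ and asymmetric Ramsey step.} The sets $A^{ij} := \Phi^{ij}\cap\Psi^{ij}$ and $B^{ij} := \Psi^{ij}\cap\Omega^{ij}$ are $\mu$-holes of width at least $\eps$ contained in the $\mu$-hole $\Psi$, and $A\cap B\subseteq\Phi\cap\Omega$ has width less than $\eps$. A double-counting using the exceptional-cherry bound~\eqref{eq:except-cherries} against the $(1/3+\eps,\ee)$-density of $\cA$ yields the crucial quantitative estimate $|\Psi^{ij}| \le 2/3 + O(\mu/\eps)$ on every pair; consequently $|(\Phi\cup\Omega)\setminus\Psi| \ge \eps - O(\mu/\eps)$, and by pigeonhole one of $|\Phi\setminus\Psi|$, $|\Omega\setminus\Psi|$ is at least $\eps/3$ on each pair. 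A further Ramsey step (possibly exchanging the roles of $\Phi$ and $\Omega$) lets us pass to a large refinement $J'' \subseteq J'$ on which $|\Phi^{ij}\setminus\Psi^{ij}| \ge \eps/3$ for every $ij\in J''^{(2)}$.

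\textbf{Inhabited triple with prescribed hole-memberships.} Partition $J''$ into three index classes $K$, $L$, $M$. By $\ee$-density and the lower bounds on $|A|$, $|B|$, $|\Phi\setminus\Psi|$, the tripartite restriction on $A^{k\l} \times B^{km} \times (\Phi\setminus\Psi)^{\l m}$ is $(\mu'',\vvv)$-tridense for some $\mu'' = \mu''(\eps)>0$. Applying Lemma~\ref{lem:crossinhabited} (combined with the random-preimage technique behind Lemma~\ref{lem:inhabitedtransversals} to forbid the exceptional cherries of~\eqref{eq:except-cherries} associated to $\Phi$, $\Psi$, and $\Omega$) yields large $K_\star, L_\star, M_\star$ together with an inhabited triple $\cQ(K_\star, L_\star)\cR(K_\star, M_\star)\cS(L_\star, M_\star)$ satisfying $Q\subseteq A$, $R\subseteq B$, $S\subseteq \Phi\setminus\Psi$, and in which every $\cQ$-, $\cR$- and $\cS$-cherry is non-exceptional with respect to each of $\Phi$, $\Psi$, $\Omega$.

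\textbf{Two pairs of intersecting links.} Both endpoints of every $\cQ$-cherry lie in $\Psi$, so by non-exceptionality the cherry-neighbourhood in $\cP^{kk'}$ is essentially disjoint from $\Psi^{kk'}$; the same holds for $\cR$-cherries. Since $|\Psi^{kk'}|\ge 1/3+\eps$, both neighbourhoods lie (up to an $\eps$-slack) in a set of width at most $2/3-\eps$, and each neighbourhood has width at least $1/3+\eps$. Elementary inclusion-exclusion then forces the pair $\cQ\cR$ to have $\delta$-intersecting links for some $\delta=\delta(\eps)>0$. Analogously, all $\cQ$- and $\cS$-cherries have both endpoints in $\Phi$, so their cherry-neighbourhoods in $\cP^{\l\l'}$ essentially avoid $\Phi^{\l\l'}$, and the same inclusion-exclusion gives that $\cQ\cS$ has $\delta$-intersecting links. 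Lemma~\ref{lem:greekleters} now supplies a $K_5^{(3)}$ in $\cA$, contradicting wickedness and completing the proof.

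The main obstacle is the width bound $|\Psi^{ij}| \le 2/3 + O(\mu/\eps)$ used in the second step: its proof requires carefully balancing the $(1/3+\eps,\ee)$-density of $\cA$ against the $\mu$-hole condition via non-exceptional cherries, and it is this quantitative estimate that opens up the room in $(\Phi\cup\Omega)\setminus\Psi$ needed to place $\cS$ and trigger the second intersecting pair.
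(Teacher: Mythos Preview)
Your overall architecture --- Ramsey reduce to $|\Phi^{ij}\cap\Omega^{ij}|<\eps|\cP^{ij}|$, build an inhabited triple with prescribed hole-memberships, and then invoke Lemma~\ref{lem:greekleters} via two pairs of $\eps$-intersecting links --- matches the paper's. But the third step contains a genuine gap.

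The claim that the restriction to $A^{k\ell}\times B^{km}\times (\Phi\setminus\Psi)^{\ell m}$ is $(\mu'',\vvv)$-tridense does \emph{not} follow from $\ee$-density and the width lower bounds. The $\ee$-density condition only gives a codegree lower bound; it says nothing about where those neighbours land. For a cherry $(P^{k\ell},P^{km})\in A\times B$ both endpoints lie in $\Psi$, so the non-exceptionality you arranged forces $N(P^{k\ell},P^{km})$ to lie mostly in $\cP^{\ell m}\setminus\Psi^{\ell m}$, a set of width up to $2/3$. Your step~2 guarantees $|\Phi^{\ell m}\setminus\Psi^{\ell m}|\ge \eps/3$, but $1/3+\eps/3<2/3$, so inclusion--exclusion does not force the neighbourhood to meet $\Phi\setminus\Psi$ at all: it could sit entirely in $\Omega\setminus\Psi$. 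The other cherry directions fare no better, since no two of $A\subseteq\Phi\cap\Psi$, $B\subseteq\Psi\cap\Omega$, $\Phi\setminus\Psi$ share a hole that would let you push the neighbourhood into the remaining set. So the tridensity is unjustified, and in fact need not hold.

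The paper sidesteps this by using $\Phi^{ik}\cup\Omega^{ik}$ rather than $\Phi\setminus\Psi$ as the target for the middle transversal. The elementary estimate
\[
\big|N(P^{ij},P^{jk})\cap(\Phi^{ik}\cup\Omega^{ik})\big|
\ge |N|+|\Phi^{ik}|+|\Omega^{ik}|-|\cP^{ik}|-|\Phi^{ik}\cap\Omega^{ik}|
\ge 2\eps\,|\cP^{ik}|
\]
holds for \emph{every} middle cherry, which immediately gives $(2\eps^3,\vvv)$-density of $\cA[\Phi\cap\Psi,\,\Phi\cup\Omega,\,\Psi\cap\Omega]$. After Lemma~\ref{lem:inhabitedtransversals} produces $\cQ\cR\cS$, one applies a product Ramsey step to $\cR(K,M)\subseteq\Phi\cup\Omega$ to pin $\cR$ uniformly into one of $\Phi$ or $\Omega$; in either case two of the three transversals share a hole and the intersecting-links argument goes through exactly as in your step~4. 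Your step~2 (the width bound $|\Psi^{ij}|\le 2/3+O(\mu/\eps)$) is correct but unnecessary once the target set is widened to $\Phi\cup\Omega$.
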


\begin{proof}
	Given $\eps>0$ we fix auxiliary integers $t_1$, $t_2$, $t_3$, and $\mu>0$ satisfying 
	the hierarchy
	\[
	    \eps^{-1}\ll t_3\ll t_2\ll t_1,\,\mu^{-1}\,.
	\]
	Let $t\in \NN$ and  let~$\cA$ be an  $\eps$-wicked reduced hypergraph
	with index set~$I$ and for sufficiently large $J\subseteq I$ 
	let~$\Phi$, $\Psi$, and $\Omega$ be~$(\mu,1/3+\eps)$-holes on $J$ such that 
	the pairs $\Phi\Psi$ and $\Psi\Omega$ are $\eps$-intersecting. In particular, we may assume that 
	the Ramsey theoretic statement $|J|\lra (t_1, t)_2^2$ holds, which asserts that every $2$-colouring of 
	$J^{(2)}$ yields a subset of $t_1$ vertices with all its pairs having the first colour or a subset of size 
	$t$ with all its pairs displaying the second colour.
	 
    Consider an auxiliary~$2$-colouring of the pairs~$ij\in J^{(2)}$ depending on whether
    \begin{align}\label{eq:nonequivholes}
        \vert \Phi^{ij}\cap \Omega^{ij}\vert 
        < 
        \eps\vert\cP^{ij}\vert
    \end{align}
    or not.
    Since~$|J|\lra (t_1, t)_2^2$, there either exists the desired set $J_\star$,
    or there is a subset~$J_1\subseteq J$ of size $t_1$
    such that~\eqref{eq:nonequivholes} holds for every~$ij\in J_1^{(2)}$.  
    So it suffices to show that the second possibility contradicts the wickedness 
    of~$\cA$. 
    
    First we note that for all $i<j<k$ from $J_1$ and every $P^{ij}\in\cP^{ij}$ 
    and $P^{jk}\in\cP^{jk}$ the $(1/3+\eps,\ee)$-density of $\cA$ and the given 
    width of the holes $\Phi$ and $\Omega$ together with~\eqref{eq:nonequivholes} imply
    \begin{align}
    	\big|N(P^{ij},P^{jk})\cap ( \Phi^{ik}\cup \Omega^{ik})\big|
		&\ge
		\big|N(P^{ij},P^{jk})\big|
		+
		\big|\Phi^{ik}\big|
		+
		\big|\Omega^{ik}\big|
		-
		\big|\cP^{ik}\big|
		-
		\big|\Phi^{ik}\cap \Omega^{ik}\big|\nonumber\\
		&\ge
		2\eps\big|\cP^{ik}\big|\,.\label{eq:oldclaim}
    \end{align}
    
    We define the reduced subhypergraph $\cA_1 \subseteq \cA$ with index set $J_1$,
    vertex classes $\cP^{ij}$ inherited from $\cA$, and constituents
    \[
    	\cA_1^{ijk}
		= 
		\cA^{ijk}[\Phi^{ij}\cap \Psi^{ij}, \Phi^{ik}\cup\Omega^{ik}, 
			\Psi^{jk}\cap \Omega^{jk}]\,.
	\]
	Since the pairs $\Phi\Psi$ and $\Psi\Omega$ are $\eps$-intersecting, 
    we infer from~\eqref{eq:oldclaim}  for all $i < j < k$ in~$J_1$ that
    \begin{align*}
        e(\cA_1^{ijk})      
        = 
        \sum_{\substack{
        	P^{ij} \in \Phi^{ij} \cap \Psi^{ij}\\
        	P^{jk} \in \Psi^{jk} \cap \Omega^{jk}}} 
		\big|N_{\cA}(P^{ij},P^{jk})\cap ( \Phi^{ik}\cup \Omega^{ik})\big|
        \ge 
        2 \varepsilon^3 \vert \cP^{ij}\vert \vert \cP^{ik}\vert \vert  \cP^{jk}\vert
    \end{align*}
    and, hence, $\cA_1$ is $(2 \varepsilon^3,\vvv)$-dense.
        
    We consider the $\eps$-exceptional left and right cherries (see Definition~\ref{def:exceptional-cherries})
    of the holes~$\Phi$,~$\Psi$, and~$\Omega$ (restricted to $J_1$) and	
    for every $i < j < k$ in $J_1$ we set
	\begin{equation*}
		\ccL^{ijk}=\ccL^{ijk}(\Psi,\eps)\cup\ccL^{ijk}(\Omega,\eps)
	    \qand 
	    \ccR^{ijk}=\ccR^{ijk}(\Phi,\eps)\cup\ccR^{ijk}(\Psi,\eps)\,.
	\end{equation*}
    We infer from~\eqref{eq:except-cherries} that
   	\begin{equation*}    \vert \ccL^{ijk}\vert 
    \leq 
    \frac{2\mu}{\eps} \vert \cP^{ij}\vert \vert  \cP^{ik}\vert
    \qand
    \vert \ccR^{ijk}\vert 
    \leq 
    \frac{2\mu}{\eps} \vert \cP^{ik}\vert \vert  \cP^{jk}\vert
    \,.
    \end{equation*}

    By the choice of $\mu$ we can apply Lemma~\ref{lem:inhabitedtransversals} to~$\cA_1$
    with $t_2$, $2\eps^3$, and $\tfrac{2\mu}{\eps}$ in place of~$t$,~$\mu$ and~$\mu'$.
    This yields a set $J_2 \subseteq J_1$ of size $t_2$ and an inhabited triple 
    of transversals $\cQ(J_2)\cR(J_2)\cS(J_2)$ avoiding the exceptional 
    cherries from~$\ccL^{ijk}$ and~$\ccR^{ijk}$ for every $ijk\in J_2^{(3)}$. 
    In particular, for all $i < j < k$ in $J_2$ we have
    \begin{equation}
        Q^{ij} R^{ik} S^{jk} \in E(\cA_1^{ijk})= E\big(\cA^{ijk}[
			\Phi^{ij}\cap \Psi^{ij}, \Phi^{ik}\cup\Omega^{ik}, \Psi^{jk}\cap \Omega^{jk}
			]\big)\,.\label{eq:eqiv-inhabited}
    \end{equation}
    Recall that the index set $I$ is ordered and for two subsets $A$, $B\subseteq I$ we write $A<B$ to signify that $\max A < \min B$.
    We fix disjoint subsets~$K' < L < M'$ of $J_2$, where $K'$ and $M'$ have size~$\lfloor t_2 /3\rfloor$ and~$L$ has size~$t_3$.
    
    Note that by definition $\cR(K',M') \subseteq \Phi \cup \Omega$. Due to the product 
    Ramsey theorem, applied with the set of colours $\{\Phi, \Omega \}$, 
    this leads to sets $K \subseteq K'$ and $M \subseteq M'$ of size $t_3$ 
    and to a hole $\Pi \in \{ \Phi, \Omega \}$ such that
    \begin{equation}\label{eq:wlogPi}
    	R^{km} \in \Pi^{km}\ \text{for every $k \in K$, and $m \in M$.}
	\end{equation}
    
    Owing to~\eqref{eq:eqiv-inhabited} the restricted transversals 
    $\cQ(K,L)$, $\cR(K,M)$, and $\cS(L,M)$ form an inhabited triple in $\cA$.
    We derive a contradiction by Lemma~\ref{lem:greekleters} and for that 
    we shall show that two of the pairs $\cQ(K,L)\cR(K,M)$, $\cQ(K,L)\cS(L,M)$, and $\cR(K,M)\cS(L,M)$ have $\eps$-intersecting
    links.
    
    First, we recall that, independent of the chosen $\Pi$, the pair $\cQ(K,L)\cS(L,M)$ consists of transversals 
    inside the hole $\Psi$ and both avoid 
    the exceptional  left and right cherries from~$\Psi$.
    Hence, for all
    $k\in K$, $\l\l'\in L^{(2)}$, and $m\in M$ we have
    \begin{equation*}
        \big|N
    	_{\cA}(Q^{k\l},Q^{k\l'})\cap\Psi^{\l\l'}\big|<\eps \big|\cP^{\l\l'}\big|
		\qand 
		\big|N_{\cA}(S^{\ell m},S^{\l'm})\cap\Psi^{\l\l'}\big|< \eps \big|\cP^{\l\l'}\big|\,.
    \end{equation*}
    Consequently, the $(1/3+\eps,\ee)$-density of $\cA$ and the width of $\Psi$ imply
    \begin{equation*}
    	\big|N_{\cA}(Q^{k\l},Q^{k\l'})\cap N_{\cA}(S^{\l m},S^{\l'm})\big|
		> 
		\eps \big|\cP^{\l\l'}\big|
    \end{equation*}
    for every $k\in K$, $\l\l'\in L^{(2)}$, and $m\in M$, i.e.,  the pair $\cQ(K,L)\cS(L,M)$
    has $\eps$-intersecting links.
    
    If $\Pi = \Phi$, then $\cQ(K,L)$ and $\cR(K,M)$ are both transversals in $\Phi$ (see~\eqref{eq:wlogPi}) and both~$\cQ$ and $\cR$ avoid the exceptional right cherries of $\Phi$.
    As before, this implies that the pair $\cQ(K,L)\cR(K,M)$ has~$\eps$-intersecting links.
    So Lemma~\ref{lem:greekleters} tells us that~$\cA$ supports a~$K_5^{(3)}$, contrary 
    to the wickedness of $\cA$.
    
     Analogously, if $\Pi = \Omega$, then $\cR(K,M)$ and $\cS(L,M)$ are both transversals in $\Omega$
      and, since both~$\cR$ and $\cS$ avoid the exceptional left cherries of $\Omega$, the pair of transversals has~$\eps$-intersecting links,
      which leads to the same contradiction.
\end{proof}

Another application of Ramsey's theorem leads to the following corollary.

\vbox{\begin{cor}\label{cor:equivalent}
	For every~$\eps\in(0,1]$ there exists~$\mu >0$ such that for all integers~$t$, $r\geq 2$ the following holds. 
	Suppose~$\cA$ is an $\eps$-wicked reduced hypergraph with index set $I$ and for sufficiently large 
	$J\subseteq I$ we are given~$(\mu,1/3+\eps)$-holes~$\Phi_1,\dots,\Phi_r$ on $J$.
	
	Then there exists a subset~$\Js\subseteq J$ of size~$t$ such that~
	\begin{enumerate}[label=\rmlabel]
	\item\label{it:equiv1} for all $\rho$, $\rho'\in[r]$ the holes $\Phi_{\rho}$ and $\Phi_{\rho'}$
		are either $\eps$-intersecting or $\eps$-disjoint on~$\Js$
	\item\label{it:equiv2}
	and $\equiv_{\eps,\Js}$ is an equivalence relation 
	on~$\{\Phi_1,\dots,\Phi_r\}$ with at most two equivalence classes.
	\end{enumerate}
\end{cor}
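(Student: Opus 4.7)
The plan is to apply Ramsey's theorem on pairs of indices to secure the dichotomy in~\ref{it:equiv1} on a large subset of $J$, to upgrade the resulting reflexive and symmetric relation to an equivalence relation via Lemma~\ref{lem:transitivity}, and finally to rule out three pairwise $\eps$-disjoint holes by a direct width estimate that uses only that each hole has width strictly above~$\tfrac13$.

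First I would fix $\mu>0$ as provided by Lemma~\ref{lem:transitivity} applied with the given $\eps$ and target size~$2$. Then I would colour each pair $ij\in J^{(2)}$ by the vector in $\{0,1\}^{\binom{r}{2}}$ whose $\{\rho,\rho'\}$-coordinate records whether $|\Phi_\rho^{ij}\cap\Phi_{\rho'}^{ij}|\ge\eps|\cP^{ij}|$. Provided $|J|$ is large enough in terms of $t$, $r$, and $\eps$, Ramsey's theorem yields a monochromatic $J_1\subseteq J$ of size comfortably exceeding~$t$, on which every pair of holes is either $\eps$-intersecting or $\eps$-disjoint---this is~\ref{it:equiv1} on $J_1$. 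Transitivity is then immediate: if $\Phi_\rho\equiv_{\eps,J_1}\Phi_\sigma$ and $\Phi_\sigma\equiv_{\eps,J_1}\Phi_\tau$, Lemma~\ref{lem:transitivity} furnishes a subset $J_2\subseteq J_1$ of size at least~$2$ with $\Phi_\rho\equiv_{\eps,J_2}\Phi_\tau$, so some $ij\in J_2^{(2)}\subseteq J_1^{(2)}$ witnesses $|\Phi_\rho^{ij}\cap\Phi_\tau^{ij}|\ge\eps|\cP^{ij}|$, and the dichotomy on $J_1$ promotes this to $\Phi_\rho\equiv_{\eps,J_1}\Phi_\tau$.

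To bound the number of classes by~$2$ I would assume for contradiction that $\Phi_a$, $\Phi_b$, $\Phi_c$ represent three distinct classes, so they are pairwise $\eps$-disjoint on $J_1$. For every $ij\in J_1^{(2)}$, the width bounds $|\Phi_\rho^{ij}|\ge(\tfrac13+\eps)|\cP^{ij}|$ combined with $|\Phi_a^{ij}\cap\Phi_b^{ij}|,|\Phi_a^{ij}\cap\Phi_c^{ij}|<\eps|\cP^{ij}|$ give
\[
 |\Phi_b^{ij}\setminus\Phi_a^{ij}|>\tfrac13|\cP^{ij}|,
 \quad |\Phi_c^{ij}\setminus\Phi_a^{ij}|>\tfrac13|\cP^{ij}|,
 \quad |\cP^{ij}\setminus\Phi_a^{ij}|\le(\tfrac23-\eps)|\cP^{ij}|.
\]
Inclusion--exclusion inside $\cP^{ij}\setminus\Phi_a^{ij}$ then yields $|\Phi_b^{ij}\cap\Phi_c^{ij}|>\eps|\cP^{ij}|$ for every such $ij$, contradicting $\eps$-disjointness of $\Phi_b$ and $\Phi_c$. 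Selecting any $\Js\subseteq J_1$ of size exactly~$t$ finishes the argument, since both $\eps$-intersecting and $\eps$-disjoint are preserved under restriction.

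The only substantive input is the single invocation of Lemma~\ref{lem:transitivity} (which is where the wickedness of $\cA$ enters the argument); the width calculation for at-most-two-classes is tight but elementary, and extracts the full strength of the assumption that every hole has width exceeding~$\tfrac13$.
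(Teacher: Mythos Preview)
Your proof is correct and follows essentially the same approach as the paper: Ramsey on the $\{0,1\}^{\binom{r}{2}}$-colouring of pairs to obtain the dichotomy, one invocation of Lemma~\ref{lem:transitivity} (with target size~$2$) whose conclusion is then propagated to all of $J_1$ via the dichotomy, and an elementary width argument ruling out three pairwise $\eps$-disjoint holes. The paper phrases the last step slightly differently (directly observing that among any three sets of size $\ge(\tfrac13+\eps)|\cP^{ij}|$ some two must share $\ge\eps|\cP^{ij}|$ elements), but your version via inclusion--exclusion inside $\cP^{ij}\setminus\Phi_a^{ij}$ is equivalent.
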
}

\begin{proof}
	For $\eps\in(0,1]$ let $\mu>0$ be given by Lemma~\ref{lem:transitivity}. For fixed $t$, $r\geq 2$  let $t'\geq t$ 
	be sufficiently large for an application of Lemma~\ref{lem:transitivity} 
	with $\eps$, $\mu$, and with $2$ in place of $t$.
	
	For a given $\eps$-wicked reduced hypergraph~$\cA$ and $(\mu,1/3+\eps)$-holes~$\Phi_1,\dots,\Phi_r$ 
	we impose 
	that the size of $J$ is at least the $2^{\binom{r}{2}}$-colour Ramsey number for graph cliques on $t'$ vertices, i.e.,
	\begin{equation}\label{eq:quivalence-pf}
		|J|\lra(t')^2_{|\Xi|}
		\quad
		\text{for}
		\quad
		\Xi
		=\big\{\xi=(\xi_{\rho\rho'})_{\rho\rho'\in[r]^{(2)}}
			\colon \xi_{\rho\rho'}\in\{0,1\}\ \text{for}\ \rho\rho'\in[r]^{(2)}
			\big\}\,.
	\end{equation}
	We assign to a pair $ij\in J^{(2)}$ 
	the colour $\xi=(\xi_{\rho\rho'})_{\rho\rho'\in[r]^{(2)}}$ with 
	$\xi_{\rho\rho'}=1$ signifying
	\[
		\big|\Phi^{ij}_\rho\cap\Phi^{ij}_{\rho'}\big|
		\ge
		\eps\big|\cP^{ij}\big|
	\]
	and $\xi_{\rho\rho'}=0$ otherwise. Owing to~\eqref{eq:quivalence-pf} there exists a subset $\Js\subseteq J$ of size at least 
	$t'\geq t$ and a colour $\xi^\star=(\xi^{\star}_{\rho\rho'})_{\rho\rho'\in [r]^{(2)}}$ 
	such that all pairs of $\Js$ were assigned 
	$\xi^{\star}$. Note that assertion~\ref{it:equiv1} follows directly from the definition of the colouring, i.e.,
	$\Phi_{\rho}$ and $\Phi_{\rho'}$ are $\eps$-intersecting on~$\Js$ if $\xi^{\star}_{\rho\rho'}=1$ 
	and $\eps$-disjoint otherwise.
	
	Obviously the relation $\equiv_{\eps,\Js}$ is reflexive and symmetric. Moreover, our choice of $t'$
	allows us to invoke Lemma~\ref{lem:transitivity} and the transitivity follows from the definition 
	of the colouring. Since all holes have width at least $1/3+\eps$, at least two among any choice of 
	three holes must share at least $\eps|\cP^{ij}|$ vertices in $\cP^{ij}$ for any $ij\in\Js^{(2)}$ and, hence,
	$\equiv_{\eps,\Js}$ has at most two equivalence classes.
\end{proof}

It will later be important that, under sufficiently general circumstances, there 
really are two distinct equivalence classes. 

\begin{lemma}\label{lem:1247}
	Given $t\in \NN$ and $\eps, \mu>0$ let $I$ be a sufficiently large set of indices.
	For every $\eps$-wicked reduced hypergraph $\cA$ with index set $I$ there 
	exist a set $J\subseteq I$ of size $t$ and two $\eps$-disjoint 
	$(\mu, 1/3+\eps)$-holes on $J$.
\end{lemma}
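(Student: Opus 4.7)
The plan is to deduce the lemma from Corollary~\ref{cor:cleaning} applied to a suitably chosen inhabited triple of transversals, exploiting the observation that a pair of transversals with $\eps$-disjoint links literally gives two $\eps$-disjoint holes sitting on the common index set.

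First I would fix auxiliary parameters $t_1 \ll t_0$ (with $t_0$ sufficiently large for a later application of Corollary~\ref{cor:cleaning} with parameter $t_1$ and with $\eps$, $\mu$) and require that $|I|$ is large enough so that Theorem~\ref{lem:nullpaper} applied to the $(\mu,\vvv)$-dense hypergraph $\cA$ (note that $(1/3+\eps,\ee)$-density implies $(1/3+\eps,\vvv)$-density, which is far stronger than what is needed here) yields a set $I_\star \subseteq I$ of size $3t_0$ and an inhabited triple $\cQ(I_\star)\cR(I_\star)\cS(I_\star)$. I would then split $I_\star$ into three consecutive blocks $K < L < M$ of size $t_0$ each, so that by restriction the three transversals become a $(K,L)$-, $(K,M)$-, and $(L,M)$-transversal forming an inhabited triple in the sense of the second part of Definition~\ref{def:inhabited}.

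Next I would apply Corollary~\ref{cor:cleaning} to this inhabited triple with $\delta = \eps$, $d = 1/3+\eps$, parameter $\mu$, and $t_1$ in place of $t$. This produces subsets $K_\star\subseteq K$, $L_\star\subseteq L$, $M_\star\subseteq M$, each of size $t_1\geq t$, such that (i) at least two of the three pairs of restricted transversals have $\eps$-disjoint links, and (ii) all six links appearing in the conclusion are $(\mu, 1/3+\eps)$-holes.

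To finish, I would pick any pair of restricted transversals with $\eps$-disjoint links; say without loss of generality $\cQ(K_\star,L_\star)\cR(K_\star,M_\star)$ has this property. Fixing any $\l\in L_\star$ and $m\in M_\star$, the two links $\Phi = \Lambda(\cQ, K_\star, \l)$ and $\Psi = \Lambda(\cR, K_\star, m)$ are $(\mu, 1/3+\eps)$-holes on $K_\star$ by part~\ref{it:cleaning2}, and they are $\eps$-disjoint on $K_\star$ because, by unfolding the definitions, $\Phi^{kk'}\cap\Psi^{kk'} = N(Q^{k\l},Q^{k'\l})\cap N(R^{km},R^{k'm})$ and the failure of~\eqref{eq:intersecting} for every $kk'\in K_\star^{(2)}$ is exactly the requirement that $|\Phi^{kk'}\cap\Psi^{kk'}| < \eps|\cP^{kk'}|$. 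Taking $J = K_\star$ (or $L_\star$, $M_\star$ in the other two symmetric cases) then proves the lemma. The argument is essentially assembly of earlier tools; the only mild conceptual point is recognising that ``$\delta$-disjoint links'' and ``$\delta$-disjoint holes'' agree verbatim when the holes in question \emph{are} those links, so Corollary~\ref{cor:cleaning} delivers the desired pair of holes for free.
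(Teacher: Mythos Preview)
Your proposal is correct and follows essentially the same route as the paper: obtain an inhabited triple of transversals via Theorem~\ref{lem:nullpaper}, split the index set into three blocks, apply Corollary~\ref{cor:cleaning}, and read off the two $\eps$-disjoint $(\mu,1/3+\eps)$-holes from any pair of transversals with $\eps$-disjoint links. The only cosmetic difference is that the paper uses a single auxiliary parameter $t'$ where you introduce both $t_0$ and $t_1$; taking $t_1=t$ would align the two arguments exactly.
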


\begin{proof}
We may assume that we have an integer $t'$ fitting into the 
hierarchy 
\[
	|I|\gg t'\gg t, \mu^{-1}\,.
\]
Since $\cA$ is, in particular, 
$(1/3+\eps,\vvv)$-dense, Theorem~\ref{lem:nullpaper} applied with 
$3t'$, $1/3+\eps$ here in place of $t, \mu$ there yields a set $I'\subseteq I$ 
of size~$3t'$ and an inhabited triple of transversals~$\cQ(I')\cR(I')\cS(I')$.

Fix an arbitrary partition $I'=K'\dcup L'\dcup M'$ such that $|K'|=|L'|=|M'|=t'$.
Now we apply Corollary~\ref{cor:cleaning} with $\eps$, $1/3+\eps$ here in 
place of $\delta$, $d$ there to the inhabited triple of restricted
transversals $\cQ(K',L')\cR(K',M')\cS(L',M')$. This yields
subsets~$K\subseteq K'$,~$L\subseteq L'$, and~$M\subseteq M'$ of size~$t$ 
satisfying properties~\ref{it:cleaning1} and~\ref{it:cleaning2} of the corollary.

By~\ref{it:cleaning1} we may assume without loss of generality 
that the pair~$\cQ(K,L)\cR(K,M)$ has {$\eps$-disjoint} links. 
Thus, fixing~$\ell\in L$ and~$m\in M$ arbitrarily we obtain
the desired~$\eps$-disjoint $(\mu, 1/3+\eps)$-holes 
$\Lambda(\cQ,K,\ell)$ and $\Lambda(\cR,K,m)$ on $J=K$.
\end{proof}

\subsection{Unions of equivalent holes}\label{sec:union}

We proceed with the union lemma, which roughly speaking asserts that unions of 
equivalent holes are holes. As usual, the precise statement involves a considerable 
loss in the number of relevant indices. Moreover, if want such a union $\Phi\cup\Psi$ to be 
a $\mu$-hole, we need to assume that $\Phi$ and $\Psi$ themselves are $\nu$-holes for 
some very small~$\nu\ll\mu$.

\begin{lemma}[Union lemma]\label{lem:union}
For every~$\mu$,~$\eps>0$ there exists~$\nu>0$ such that for every~$t\in \NN$ the following holds. 
Suppose~$\cA$ is an~$\eps$-wicked reduced hypergraph with index set~$I$ and for a sufficiently large subset~$J\subseteq I$ we are given two $(\nu, 1/3+\eps)$-holes~$\Phi$ and~$\Psi$ on~$J$ 
such that~$\Phi \equiv_{\eps, J} \Psi$. 
Then, there exists a subset~$J_\star\subseteq J$ of size at least~$t$ such that~$\Phi\cup \Psi$ is a~$(\mu,1/3+\eps)$-hole on~$J_\star$.
\end{lemma}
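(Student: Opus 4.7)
Given $\mu,\eps>0$, we choose $\nu\ll\mu,\eps$ together with auxiliary parameters that will be selected along the way. We argue by contradiction: assume no subset $J_\star\subseteq J$ of size at least~$t$ yields a $(\mu,1/3+\eps)$-hole for $\Phi\cup\Psi$. Since $\Phi^{ij}\cup\Psi^{ij}\supseteq\Phi^{ij}$ already has width $\geq 1/3+\eps$, the failure must come from the edge count, so every sufficiently large subset of~$J$ contains some triple $ijk$ with $e((\Phi\cup\Psi)^{ijk})>\mu|\cP^{ij}||\cP^{ik}||\cP^{jk}|$. A standard Ramsey-type argument on the $3$-hypergraph of such bad triples lets us pass to a large $J_1\subseteq J$ on which every triple is bad. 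Since $\Phi$ and $\Psi$ are $\nu$-holes, inclusion-exclusion shows that the density of \emph{mixed} edges (those with at least one vertex in $F^{ij}=\Phi^{ij}\setminus\Psi^{ij}$ and at least one in $P^{ij}=\Psi^{ij}\setminus\Phi^{ij}$) is $\geq\mu-2\nu$ on every triple of $J_1^{(3)}$.

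Assigning to each vertex of $\Phi\cup\Psi$ a label in $\{F,P,C\}$ with $C^{ij}=\Phi^{ij}\cap\Psi^{ij}$ partitions the mixed edges into $12$ ordered types whose label multisets contain both~$F$ and~$P$. Product Ramsey on the resulting $12$-coloured $3$-graph on $J_1$ yields a large $J_2\subseteq J_1$ across which a single type~$\tau$ dominates the mixed edge count. The corresponding constituent $\cA^{\tau}$ is then $(\mu',\vvv)$-dense for a suitable $\mu'\gg\nu$; using the bound~\eqref{eq:except-cherries} on the $\eps$-exceptional cherries of $\Phi$ and $\Psi$, we apply Lemma~\ref{lem:inhabitedtransversals} to~$\cA^{\tau}$. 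After restricting to ordered subsets $K<L<M$ inside the index set it returns, this produces an inhabited triple $\cQ(K,L)\cR(K,M)\cS(L,M)$ whose vertices sit in the subsets prescribed by~$\tau$ and which avoids the exceptional left, middle, and right cherries of both $\Phi$ and~$\Psi$.

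The plan now is to invoke Lemma~\ref{lem:greekleters} by showing that at least two of the three pairs $\cQ\cR$, $\cQ\cS$, $\cR\cS$ have $\eps$-intersecting links, thereby forcing~$\cA$ to support a $K_5^{(3)}$ and contradicting $\eps$-wickedness. When $\tau$ is a permutation of $\{F,P,C\}$, two of the transversals are automatically contained in $\Phi$ and two in $\Psi$ (the $C$-valued transversal belongs to both). The argument then mirrors the final step of the proof of Lemma~\ref{lem:transitivity}: exceptional-cherry avoidance together with the $(1/3+\eps,\ee)$-density of $\cA$ and the widths of $\Phi$ and $\Psi$ yield the two required intersecting pairs through the inclusion-exclusion estimate $|X\cap Y|\geq|X\setminus Z|+|Y\setminus Z|-|\cP^{ab}\setminus Z|$ applied with $Z\in\{\Phi^{ab},\Psi^{ab}\}$.

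The main obstacle is the remaining case, in which $\tau$ is a permutation of $\{F,F,P\}$ or $\{F,P,P\}$: two transversals now live on the same side of the symmetric difference and only the pair inside a single hole immediately yields intersecting links. Here we crucially use the hypothesis $\Phi\equiv_{\eps,J}\Psi$, which ensures $|C^{ij}|\geq\eps|\cP^{ij}|$ for every relevant~$ij$. The plan is to combine this density of $C$ with a further application of Lemma~\ref{lem:crossinhabited}---parametrising, via the $\vvv$-tridense subhypergraphs indexed by the edges of the type-$\tau$ constituent, an extension of the inhabited triple---to replace or supplement the lone ``wrong side'' transversal by one taking values in $C\subseteq\Phi\cap\Psi$. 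Such a substituted transversal automatically lies in both holes and supplies the missing intersecting pair, after which a final call to Lemma~\ref{lem:greekleters} completes the contradiction and yields the desired~$J_\star$.
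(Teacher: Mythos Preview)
Your setup through the Ramsey reduction and the application of Lemma~\ref{lem:inhabitedtransversals} is sound and parallels the paper, but your case analysis based on the partition into $F=\Phi\setminus\Psi$, $P=\Psi\setminus\Phi$, $C=\Phi\cap\Psi$ creates an artificial obstacle that you do not actually overcome. In the case $\tau\in\{F,F,P\}^{\text{perm}}$ (say $\cQ,\cR\subseteq\Phi\setminus\Psi$ and $\cS\subseteq\Psi\setminus\Phi$) you correctly observe that only the pair $\cQ\cR$ is directly seen to have $\eps$-intersecting links; for $\cQ\cS$ the exceptional-cherry avoidance gives $|N(\cQ)\cap\Phi|<\eps$ and $|N(\cS)\cap\Psi|<\eps$, but since $\Phi$ and $\Psi$ are different sets no inclusion-exclusion forces $N(\cQ)$ and $N(\cS)$ to intersect. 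Your proposed remedy---``use Lemma~\ref{lem:crossinhabited} to replace or supplement the lone wrong-side transversal by one in $C$''---is not a proof: you give no mechanism for producing a new transversal in $C$ that remains inhabited with the others, and the density $|C^{ij}|\ge\eps|\cP^{ij}|$ alone does not furnish one.

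The paper avoids the case split entirely by working with the coarser $8$-colouring $(\Pi_1,\Pi_2,\Pi_3)\in\{\Phi,\Psi\}^3$ and then, crucially, inserting two extra steps before the contradiction: it applies Corollary~\ref{cor:cleaning} (so that the six relevant links $\Lambda(\cQ,K_\star,\ell)$, $\Lambda(\cR,K_\star,m)$, etc.\ are themselves $(\mu,1/3+\eps)$-holes), and then Corollary~\ref{cor:equivalent} to the four holes $\{\Lambda(\cQ),\Lambda(\cR),\Pi_1,\Pi_2\}$. Exceptional-cherry avoidance gives $\Pi_1\not\equiv\Lambda(\cQ)$ and $\Pi_2\not\equiv\Lambda(\cR)$, while the hypothesis $\Phi\equiv_{\eps,J}\Psi$ gives $\Pi_1\equiv\Pi_2$ regardless of which element of $\{\Phi,\Psi\}$ each is; with at most two equivalence classes this forces $\Lambda(\cQ)\equiv\Lambda(\cR)$, i.e.\ the pair is $\eps$-intersecting. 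The same argument works verbatim for the other two pairs, so all three are $\eps$-intersecting, contradicting Corollary~\ref{cor:cleaning}\,\ref{it:cleaning1}. In short, the missing idea is to treat the links themselves as holes and route the hypothesis $\Phi\equiv\Psi$ through the transitivity machinery (Corollary~\ref{cor:equivalent}) rather than through an ad hoc transversal replacement.
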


\begin{proof}
	As decreasing $\mu$ makes the lemma stronger, we may assume that $\mu\ll\eps$. 
	Now we take auxiliary integers $t_1$, $t_2$, $t_3$, and $t_4$ and a positive 
	real $\nu$
	fitting into the hierarchy 
\[
		\eps\gg t_4^{-1}\gg t_3^{-1}\gg t_2^{-1}, \nu\gg t_1^{-1}\,.
	\]
More precisely we assume that  
	\begin{enumerate}[label=\nlabel]
	\item\label{eq:union_t4} $t_4$ is so large that the conclusion of 
		Corollary~\ref{cor:equivalent} holds for $\eps$, $\mu$, and for
		$2$, $4$, $t_4$
		here in place of $t$, $r$, $|J|$ there;
	\item\label{eq:union_t3} 
	$t_3$ is so large that the conclusion of Corollary~\ref{cor:cleaning} holds 
	for $t_4$, $\eps$, $\mu$, $1/3+\eps$, $t_3$
	here in place of 
	$t$, $\delta$, $\mu$, $d$, $\min\{|K|, |L|, |M|\}$
	there;
	\item\label{eq:union_t2} $t_2$ is so large and $\nu\le \mu$ is so small 
	that the conclusion 
	of Lemma~\ref{lem:inhabitedtransversals}
	holds for $3t_3$, $\mu/8$, $2\nu/\eps$, $t_2$       
	here in place of 
	$t$, $\mu$, $\mu'$, $|J|$
	there;
	\item\label{eq:union_t1}
		and $t_1\lra (t_2)^3_8$.
	\end{enumerate}	 
	
	Finally, given $t\in\NN$ we suppose that $J\subseteq I$ is sufficiently large so that 
	\[
		|J|\lra (t_1, t)^3_2\,.
	\]	
	For $(\nu, 1/3+\eps)$-holes~$\Phi$ and~$\Psi$ on $J$ let 
	\[
		\ccL=\ccL(\Phi,\eps)\cup\ccL(\Psi,\eps)
		\qand
		\ccR=\ccR(\Phi,\eps)\cup\ccR(\Psi,\eps)
	\]
	be their $\eps$-exceptional left and right cherries. For later reference we recall that~\eqref{eq:except-cherries}
	yields
	\begin{equation}\label{eq:boundcherriesunion}
    	\vert \ccL^{ijk}\vert 
    	\leq 
    	\frac{2\nu}{\eps} \vert \cP^{ij}\vert \vert  \cP^{ik}\vert
    	\qand
    	\vert \ccR^{ijk}\vert 
    	\leq 
    	\frac{2\nu}{\eps} \vert \cP^{ik}\vert \vert  \cP^{jk}\vert
    \,.
    \end{equation}
    
    We  begin with an application of Ramsey's theorem for hypergraphs and 
    consider a $2$-colouring of the triples $ijk\in J^{(3)}$ depending on whether
    \begin{align}\label{eq:union}
        e(\Phi^{ij}\cup \Psi^{ij}, \Phi^{ik}\cup \Psi^{ik}, \Phi^{jk}\cup \Psi^{jk}) 
        >
        \mu \vert\cP^{ij}\vert \vert\cP^{ik}\vert\vert\cP^{jk}\vert
    \end{align}
    or not. Owing to the size of $J$, there either exists the desired set $J_\star$,
    or there is a subset~$J_1\subseteq J$ of size $t_1$ 
    such that~\eqref{eq:union} holds for all~$ijk\in J^{(3)}_1$. 
    We shall show that the second case leads to a contradiction. 
    
    First we observe that for every~$ijk\in J_1^{(3)}$ inequality~\eqref{eq:union} 			implies that for at least one 
		of the eight possible triples~$(\Pi_1,\Pi_2, \Pi_3) \in \{\Phi,\Psi\}^3$ we have 
    \begin{align}\label{eq:crossdensepi}
        e(\Pi_1^{ij}, \Pi_2^{ik}, \Pi_3^{jk}) 
        >
        \frac{\mu}{8} \vert\cP^{ij}\vert \vert\cP^{ik}\vert\vert\cP^{jk}\vert\,.
    \end{align}
     (Actually, since $\Phi$ and $\Psi$ are $\nu$-holes and~$\nu\leq\mu/8$, 
    inequality~\eqref{eq:crossdensepi} can neither hold for~$e(\Phi^{ij},\Phi^{ik},\Phi^{jk})$ nor for 
    $e(\Psi^{ij},\Psi^{ik},\Psi^{jk})$, but we shall not use this here.)
    Thus, there exists an $8$-colouring of $J^{(3)}_1$ such that if the colour 
    of a triple $ijk\in J^{(3)}_1$ is~$(\Pi_1,\Pi_2, \Pi_3) \in \{\Phi,\Psi\}^3$,  
    then this indicates the validity of~\eqref{eq:crossdensepi}
    for this triple of holes. 
    In view of~\ref{eq:union_t1} there exists a subset~$J_2\subseteq J_1$ of size~$t_2$ 
    and there is a fixed colour $(\Pi_1,\Pi_2, \Pi_3) \in \{\Phi,\Psi\}^3$ such that
    inequality~\eqref{eq:crossdensepi} holds for every~$ijk\in J^{(3)}_2$.

Now the reduced subhypergraph $\cA_2\subseteq\cA$ with index set $J_2$, 
vertex classes inherited from~$\cA$, and constituents 
\begin{equation}\label{eq:union-trans}
	\cA^{ijk}_2
	=
	\cA^{ijk}[\Pi_1^{ij}, \Pi_2^{ik}, \Pi_3^{jk}]
\end{equation} 
for all $i<j<k$ in $J_2$ is~$(\mu/8,\vvv)$-dense.
Owing to~\eqref{eq:boundcherriesunion} and our choice of $t_2$ and $\nu$ 
in~\ref{eq:union_t2}, Lemma~\ref{lem:inhabitedtransversals}
ensures that there is a subset~$J_3\subseteq J_2$ of size $3t_3$ 
and there is an inhabited triple of transversals 
$\cQ(J_3)\cR(J_3)\cS(J_3)$ where each transversal avoids the sets of exceptional 
left and right cherries~$\ccL$ and~$\ccR$ of $\Phi$ and $\Psi$. 

Since $\cQ(J_3)\cR(J_3)\cS(J_3)$ is an inhabited triple,
we have $Q^{ij}R^{ik}S^{jk}\in E(\cA_2)$ for every $i<j<k$ in $J_3$ 
and, therefore,~\eqref{eq:union-trans} implies 
\begin{equation}\label{eq:union-trans2}
	Q^{ij}\in \Pi_1\,,\quad
	R^{ik}\in \Pi_2\,,\qand
	S^{jk}\in \Pi_3
\end{equation}
for all $i<j<k$ in $J_3$.

    Fix disjoint subsets of indices~$K_3<L_3<M_3$ of $J_3$ each of size~$t_3$. 
    Clearly, the triple of restricted 
    transversals~$\cQ(K_3,L_3)\cR(K_3,M_3)\cS(L_3,M_3)$ 
    is still inhabited. 
    Therefore, the choice of $t_3$ in~\ref{eq:union_t3} 
    allows an application of Corollary~\ref{cor:cleaning}, which 
    yields subsets~$K_4\subseteq K_3$, $L_4\subseteq L_3$, and~$M_4\subseteq M_3$
     each of size $t_4$
    satisfying properties~\ref{it:cleaning1} and~\ref{it:cleaning2} 
    of Corollary~\ref{cor:cleaning}.
    
    Next we shall show that all three pairs of restricted transversals $\cQ(K_4, L_4)\cR(K_4,M_4)$, 
    $\cQ(K_4, L_4)\cS(L_4,M_4)$, and $\cR(K_4, M_4)\cS(L_4,M_4)$ have $\eps$-intersecting links. 
    However, this contradicts property~\ref{it:cleaning1} of 	
    Corollary~\ref{cor:cleaning}, which allows only one pair of transversals with 
    $\eps$-intersecting links and this contradiction concludes the proof of Lemma~\ref{lem:union}.
    Below we show that the pair $\cQ(K_4, L_4)\cR(K_4,M_4)$ has $\eps$-intersecting 
    links. The proof 
    for the other pairs follows essentially verbatim the same lines.
    
    Fix some~$\l\in L_4$ and~$m\in M_4$. Property~\ref{it:cleaning2} 
    of Corollary~\ref{cor:cleaning} 
    tells us that $\Lambda(\cQ, K_4, \l)$ and  $\Lambda(\cR, K_4, m)$ 
    are $(\mu,1/3+\eps)$-holes on~$K_4$. 
    Moreover, since $\nu\leq\mu$, also $\Phi$ and $\Psi$ 
    are $(\mu,1/3+\eps)$-holes on~$K_4$
	and, therefore, the choice of $t_4$ in~\ref{eq:union_t4} and an application of 
	Corollary~\ref{cor:equivalent} yield a subset~$\Ks\subseteq K_4$ of size two 
	such that~$\equiv=\equiv_{\eps, \Ks}$ is an equivalence relation with at most 
	two equivalence classes
	on the $\mu$-holes
	\[
		\Lambda(\cQ, \Ks, \l)\,,\quad \Lambda(\cR, \Ks, m)\,,\quad \Pi_1\,,\qand \Pi_2\,. 
    \]
    
    In view of~\eqref{eq:union-trans2} we have $\cQ(\Ks, L_4)\subseteq \Pi_1$
    and~$\cR(\Ks, M_4)\subseteq \Pi_2$ and, since $\cQ$ and $\cR$ avoid the exceptional cherries from~$\ccL$ 
    and~$\ccR$, we infer
    \begin{align*}
          \vert N(Q^{k\ell}, Q^{k'\ell})\cap \Pi_1^{kk'}\vert <\eps \vert \cP^{kk'}\vert 
          \qqand
          \vert N(R^{km}, R^{k'm})\cap \Pi_2^{kk'}\vert <\eps \vert \cP^{kk'}\vert\,,
    \end{align*}
    where $k$ and $k'$ denote the two elements of $\Ks$. Consequently, 
    \begin{align*}
        \Pi_1 \not\equiv\Lambda(\cQ, \Ks,\l) 
        \qand 
        \Pi_2 \not\equiv \Lambda(\cR, \Ks,m)\,.                            
    \end{align*}
    The assumption $\Phi\equiv \Psi$ of the lemma yields $\Pi_1\equiv \Pi_2$ and $\equiv$
    has at most two equivalence classes, we thus arrive at 
    \[
    	\Lambda(\cQ, \Ks,\l)\equiv\Lambda(\cR, \Ks, m)\,.
	\]
	
	In other words we 
	have $|N(Q^{k\ell}, Q^{k'\ell})\cap N(R^{km}, R^{k'm})|\ge \eps|\cP^{kk'}|$,
	which excludes the possibility that the pair of transversals 
	$\cQ(K_4, L_4)\cR(K_4,M_4)$ has~$\eps$-disjoint links. 
	So by property~\ref{it:cleaning1} of Corollary~\ref{cor:cleaning} it follows 
	that this pair has~$\eps$-intersecting links, as desired.
\end{proof}

For later reference we now state a corollary that follows from Corollary~\ref{cor:equivalent} and Lemma~\ref{lem:union}. 

\begin{cor}\label{cor:adhoc}
For every~$\mu$,~$\eps>0$ there exists~$\nu>0$ such that for every~$t\in \NN$ the following holds. 
Suppose~$\cA$ is an~$\eps$-wicked reduced hypergraph with index set~$I$ and for a sufficiently large subset~$J\subseteq I$ we are given three~$(\nu, 1/3+\eps)$-holes~$\Phi$,~$\Psi$, and~$\Omega$ on $J$ such that~$\Phi$ and~$\Psi$ are~$\eps$-disjoint.

Then, there exists a subset~$J_\star\subseteq J$ of size at least~$t$ such that \begin{enumerate}[label=\Alabel]
    \item \label{it:adhocphi} either $\Phi\cup\Omega$ is a~$(\mu,1/3+\eps)$-hole~$\eps$-disjoint with~$\Psi$
    \item \label{it:adhocpsi}or $\Psi\cup\Omega$ is a~$(\mu,1/3+\eps)$-hole~$\eps$-disjoint with~$\Phi$.
\end{enumerate}
\end{cor}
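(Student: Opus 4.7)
The plan is to apply Corollary~\ref{cor:equivalent} to the three holes $\{\Phi,\Psi,\Omega\}$ and then invoke the Union Lemma (Lemma~\ref{lem:union}) on the pair that ends up equivalent. Given $\mu,\eps>0$, I would first choose $\nu>0$ small enough to satisfy the hypotheses of both results simultaneously, and require the ambient index set $J$ to be large enough to absorb the losses from the two successive applications.

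My first step is to apply Corollary~\ref{cor:equivalent} with $r=3$ to the three $(\nu,1/3+\eps)$-holes on $J$. This yields a subset $J_1\subseteq J$ (of suitably large size) on which every pair among $\{\Phi,\Psi,\Omega\}$ is either $\eps$-intersecting or $\eps$-disjoint, and on which $\equiv_{\eps,J_1}$ is an equivalence relation with at most two classes. The hypothesis that $\Phi$ and $\Psi$ are $\eps$-disjoint is preserved on $J_1$, so $\Phi$ and $\Psi$ lie in distinct classes. Since only two classes are available, $\Omega$ must land in the same class as one of them; without loss of generality $\Omega\equiv_{\eps,J_1}\Phi$, the alternative yielding conclusion~\ref{it:adhocpsi} by symmetry. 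The dichotomy then forces $\Omega$ and $\Psi$ to be $\eps$-disjoint on $J_1$.

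My second step is to invoke the Union Lemma on the equivalent pair $\Phi,\Omega$ over $J_1$: this produces a subset $J_\star\subseteq J_1$ of size at least $t$ on which $\Phi\cup\Omega$ is a $(\mu,1/3+\eps)$-hole (the width $\geq 1/3+\eps$ is inherited from $\Phi$ alone). For $\eps$-disjointness from $\Psi$ on $J_\star$, I use the set-theoretic identity
\[
(\Phi\cup\Omega)\cap\Psi=(\Phi\cap\Psi)\cup(\Omega\cap\Psi)
\]
together with the bounds $|(\Phi\cap\Psi)^{ij}|,\,|(\Omega\cap\Psi)^{ij}|<\eps|\cP^{ij}|$ available on $J_\star\subseteq J_1$.

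The main obstacle I anticipate is a factor-of-two in the disjointness bound: a naive union bound gives only $|((\Phi\cup\Omega)\cap\Psi)^{ij}|<2\eps|\cP^{ij}|$ rather than the $\eps|\cP^{ij}|$ demanded by the statement. I would address this by re-running the whole argument with $\eps/2$ in place of $\eps$ in Corollary~\ref{cor:equivalent} and Lemma~\ref{lem:union} (which is legitimate since $\eps$-wickedness implies $(\eps/2)$-wickedness and every $(\nu,1/3+\eps)$-hole is also a $(\nu,1/3+\eps/2)$-hole, while the output widths remain $\geq 1/3+\eps$ thanks to $\Phi$), preceded by a preliminary Ramsey partition of $J^{(2)}$ aimed at placing $\Phi$ and $\Psi$ into distinct $(\eps/2)$-equivalence classes on a large homogeneous subset. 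The final union bound then returns exactly $2\cdot\eps/2=\eps$, matching the statement; making the Ramsey refinement work in the delicate regime where $|(\Phi\cap\Psi)^{ij}|$ is just under $\eps|\cP^{ij}|$ is the technical crux.
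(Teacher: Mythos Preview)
Your first two steps coincide with the paper's argument, and you correctly identify the factor-of-two obstacle in the naive union bound. The gap is in your proposed fix. Running Corollary~\ref{cor:equivalent} and Lemma~\ref{lem:union} with $\eps/2$ in place of $\eps$ requires $\Phi$ and $\Psi$ to lie in distinct $(\eps/2)$-equivalence classes, but the hypothesis only gives $\eps$-disjointness: if $\tfrac{\eps}{2}|\cP^{ij}|\le |\Phi^{ij}\cap\Psi^{ij}|<\eps|\cP^{ij}|$ for every $ij$, then $\Phi$ and $\Psi$ are $(\eps/2)$-intersecting everywhere, and no Ramsey refinement of $J^{(2)}$ can separate them. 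Your ``preliminary Ramsey partition'' therefore cannot guarantee the needed separation, and this is precisely the regime you flag as ``the technical crux'' without resolving it.

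The paper sidesteps the issue entirely by a \emph{second} application of Corollary~\ref{cor:equivalent}, this time to the triple $\{\Phi,\,\Phi\cup\Omega,\,\Psi\}$ on $J_2$ (after the Union Lemma has produced the $(\mu,1/3+\eps)$-hole $\Phi\cup\Omega$; here one uses $\mu\ll\eps$ so that all three sets qualify). On the resulting $J_\star$ each pair is either $\eps$-intersecting or $\eps$-disjoint and $\equiv_{\eps,J_\star}$ has at most two classes. Now $\Phi\cup\Omega\supseteq\Phi$ forces $\Phi\cup\Omega\equiv_{\eps,J_\star}\Phi$ (the intersection is all of $\Phi$, of width $\ge 1/3+\eps>\eps$), while $\Phi\not\equiv_{\eps,J_\star}\Psi$ is inherited from the hypothesis. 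Transitivity then yields $\Phi\cup\Omega\not\equiv_{\eps,J_\star}\Psi$, i.e.\ exact $\eps$-disjointness, with no factor-of-two loss.
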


\begin{proof}
Again we may assume that $\mu\ll\eps$. Take appropriate constants 
\[
	\nu\ll\mu
	\qand 
	t_1\gg t_2\gg t, \nu^{-1}\,, 
\]
and assume that $|J|\gg t_1$. 

Due to Corollary~\ref{cor:equivalent} there is a subset~$J_1\subseteq J$ 
of size~$t_1$ such that~$\equiv_{\eps, J_1}$ is an equivalence relation 
with at most two equivalence classes on~$\{\Phi,\Psi,\Omega\}$. 
By hypothesis the holes~$\Phi$ and~$\Psi$ are in different classes
and thus we may assume without loss of generality that 
\[
    \Omega\equiv_{\eps,J_1}\Phi
    \qqand
    \Omega\not\equiv_{\eps,J_1}\Psi\,.
\]

An application of Lemma~\ref{lem:union} yields the existence of a 
subset~$J_2\subseteq J_1$ of size~$t_2$ on which
\[
	\Phi\cup \Omega\text{ is a }(\mu,1/3+\eps)\text{-hole.}
\]

Now a second application of Corollary~\ref{cor:equivalent} 
leads to a $t$-element subset $J_\star\subseteq J_2$ such 
that~$\equiv_{\eps,J_\star}$ is an equivalence relation with at most 
two equivalence classes on $\{\Phi, \Phi\cup\Omega, \Psi\}$.
Since $\Phi\cup\Omega\equiv_{\eps,J_\star}\Phi\not\equiv_{\eps,J_\star}\Psi$,
we have $\Phi\cup\Omega\not\equiv_{\eps,J_\star}\Psi$. Altogether both parts of~\ref{it:adhocphi} hold. 
\end{proof}

\subsection{Holes derived from two transversals} \label{sec:hdtt}

Before we can make further progress, we need to analyse holes generated 
by two transversals. Given two transversals $\cQ(J)$ and $\cR(J)$ 
in a wicked reduced hypergraph $\cA$, we wonder whether for fixed $i\in J$
the sets $\Omega^{jk}_i\subseteq \cP^{jk}$ defined by $\Omega^{jk}_i=N(Q^{ij}, R^{ik})$
form a hole. There are several possible cases depending on how $i$, $j$, $k$ are ordered,
and in the lemma that follows we focus on the case $i<j<k$. 
It turns out that if the links of $\cQ$ and $\cR$ satisfy a certain equivalence 
condition (see~\eqref{eq:2103} below), then on a large subset of $J$ 
the sets $\Omega^{jk}_i$ form holes.   

\begin{lemma}\label{lem:2148}
	Let $\eps>0$, $\nu>0$, and $t\in \NN$ be given and suppose that~$\cA$ 
	is an $\eps$-wicked reduced hypergraph with index set $I$. 
	If $J\subseteq I$ is sufficiently large and $\cQ(J)$, $\cR(J)$ are 
	two transversals on $J$ such that all $i<j<k<\ell$ from $J$ satisfy 
	\begin{equation}\label{eq:2103}
   	|N(Q^{ik}, Q^{jk})\cap N(R^{i\ell}, R^{j\ell})|\ge \eps |\cP^{ij}|\,,
	\end{equation}
	then there is a set $J_\star\subseteq J$ of size $t$ such that 
	we have  
	\[
  		e\bigl(N(Q^{ij}, R^{ik}), N(Q^{ij}, R^{i\ell}),  N(Q^{ik}, R^{i\ell})\bigr) 
  		\le 
		\nu |\cP^{jk}||\cP^{j\ell}||\cP^{k\ell}| 
	\]
	for all $i<j<k<\ell$ in $J_\star$.
\end{lemma}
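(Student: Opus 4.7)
The plan is to argue by contradiction using Ramsey's theorem on $4$-tuples. Fix $t_1\gg t$ and assume that $|J|\lra(t_1,t)_2^4$. Two-colour each ordered $4$-tuple $i<j<k<\ell$ of indices from $J$ according to whether the inequality
\[
	e\bigl(N(Q^{ij},R^{ik}),\, N(Q^{ij},R^{i\ell}),\, N(Q^{ik},R^{i\ell})\bigr)
	\le
	\nu\,|\cP^{jk}||\cP^{j\ell}||\cP^{k\ell}|
\]
holds or fails. If Ramsey supplies a $t$-set on which the inequality holds, we are done; otherwise it yields a $t_1$-element subset $J_1\subseteq J$ on which the reverse strict inequality holds for every ordered $4$-tuple. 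The remainder of the proof aims to derive a $K_5^{(3)}$ in $\cA$ from this assumption, contradicting the $\eps$-wickedness of $\cA$.

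Writing $\Omega^{ab}_i:=N(Q^{ia},R^{ib})\subseteq\cP^{ab}$ whenever $i<a<b$, the assumption on $J_1$ says exactly that, for each fixed $i\in J_1$, the reduced subhypergraph $\cA_i\subseteq\cA$ indexed by $\{j\in J_1\colon j>i\}$ with vertex classes $\Omega^{ab}_i$ and constituents $\cA^{jk\ell}[\Omega^{jk}_i,\Omega^{j\ell}_i,\Omega^{k\ell}_i]$ is $(\nu,\vvv)$-dense. I then apply Lemma~\ref{lem:inhabitedtransversals} to each $\cA_i$, taking as forbidden cherries the $\eps$-exceptional left and right cherries of the neighbourhood-holes rooted at $Q^{i\cdot}$ and at $R^{i\cdot}$; this yields an inhabited triple of transversals $\cQ'(J'_i)\cR'(J'_i)\cS'(J'_i)$ sitting inside the classes $\Omega^{\cdot\cdot}_i$.

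The crux of the proof is to combine such inhabited triples with the intersection hypothesis~\eqref{eq:2103} to assemble a $K_5^{(3)}$ in $\cA$. The na\"ive attempt $P^{ij}=Q^{ij}$, $P^{ik}=R^{ik}$, $P^{i\ell}=R^{i\ell}$, $P^{jk}={Q'}^{jk}$, $P^{j\ell}={R'}^{j\ell}$, $P^{k\ell}={S'}^{k\ell}$ delivers edges on the triples $\{j,k,\ell\}$, $\{i,j,k\}$, $\{i,j,\ell\}$, but fails on $\{i,k,\ell\}$ because ${S'}^{k\ell}\in\Omega^{k\ell}_i$ forces $P^{ik}=Q^{ik}$, conflicting with $P^{ik}=R^{ik}$ imposed by $\{i,j,k\}$. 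Hypothesis~\eqref{eq:2103} breaks this deadlock: it furnishes at least $\eps|\cP^{ij}|$ compromise vertices $V^{ij}\in N(Q^{ik},Q^{jk})\cap N(R^{i\ell},R^{j\ell})$ simultaneously realising $V^{ij}Q^{ik}Q^{jk}\in E(\cA^{ijk})$ and $V^{ij}R^{i\ell}R^{j\ell}\in E(\cA^{ij\ell})$. Setting $P^{ij}=V^{ij}$ then permits the uniform choice $P^{ik}=Q^{ik}$, $P^{i\ell}=R^{i\ell}$, $P^{jk}=Q^{jk}$, $P^{j\ell}=R^{j\ell}$; together with any $P^{k\ell}\in\Omega^{k\ell}_i\cap\Omega^{k\ell}_j$ this closes a $K_4^{(3)}$ on $\{i,j,k,\ell\}$.

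To upgrade the $K_4^{(3)}$ construction to a $K_5^{(3)}$, I plan to work with $5$ or $6$ suitably chosen indices simultaneously and to pick compromise vertices $V^{ab}$ and $\Omega$-intersection vertices for all pairs at once by a further Ramsey argument on $J_1$. The principal obstacle will be precisely this joint consistency check: orchestrating the compromise and $\Omega$-intersection choices for all $\binom{5}{2}=10$ pairs in parallel, after which a final application of Corollary~\ref{cor:cleaning} and Lemma~\ref{lem:greekleters} to two families of restricted transversals whose links are $\eps$-intersecting on a common pair of indices should force the desired $K_5^{(3)}$ in $\cA$, delivering the required contradiction.
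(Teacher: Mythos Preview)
Your initial setup is right: Ramsey on $4$-tuples either produces $J_\star$ or a large $J_1$ on which each $\cA_i$ (with constituents $\cA^{jk\ell}[\Omega^{jk}_i,\Omega^{j\ell}_i,\Omega^{k\ell}_i]$) is $(\nu,\vvv)$-dense. From there, however, the plan has a genuine gap.

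Applying Lemma~\ref{lem:inhabitedtransversals} separately to each $\cA_i$ gives you inhabited triples $\cQ'_i\cR'_i\cS'_i$ living on index subsets $J'_i$ that depend on $i$; there is no mechanism to make these objects coexist on a common subset, and without that you cannot combine information across different values of $i$. The paper resolves this with Lemma~\ref{lem:crossinhabited}: after partitioning $J_1=X_1\dcup K_1\dcup L_1\dcup M_1$ with $X_1<K_1<L_1<M_1$, one obtains a \emph{single} triple $\cT(K_2,L_2)\cU(K_2,M_2)\cV(L_2,M_2)$ that is inhabited in $\cA_x$ for \emph{every} $x$ in a large set $X_2$. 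This simultaneity is what buys you four inhabited triples in $\cA$ at once, namely $\cQ\cR\cT$, $\cQ\cR\cU$, $\cQ\cR\cV$, and $\cT\cU\cV$.

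Your direct vertex-picking approach to a $K_5^{(3)}$ is also not viable as written. The ``compromise vertex'' $V^{ij}$ argument closes a $K_4^{(3)}$ on $\{i,j,k,\ell\}$, but that is no contradiction since $\piee(K_4^{(3)})=0$. The upgrade to five indices is only a hope: you would need to synchronise compromise and $\Omega$-intersection choices over all ten pairs, and hypothesis~\eqref{eq:2103} gives you nothing about, say, $\cP^{k\ell}$-type classes. The paper never attempts this; instead it applies Corollary~\ref{cor:cleaning} to each of the four inhabited triples above, uses~\eqref{eq:2103} to force the $\cQ\cR$-pairs to have $\eps$-intersecting links (hence the $\cQ\cT$, $\cQ\cU$, $\cR\cU$, $\cR\cV$ pairs have $\eps$-disjoint links), and then a three-set inclusion--exclusion in $\cP^{kk'}$ and $\cP^{mm'}$ shows that both $\cT\cU$ and $\cU\cV$ have $\eps$-intersecting links, contradicting Corollary~\ref{cor:cleaning}\,\ref{it:cleaning1} for the triple $\cT\cU\cV$.
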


\begin{proof}
	Suppose that $|J|\gg t_1\gg t_2\gg t, \eps^{-1}, \nu^{-1}$. 
	Set $\Omega^{jk}_i=N(Q^{ij}, R^{ik})$ for all $i<j<k$ from $J$ and colour the 
	quadruples $i<j<k<\ell$ depending on whether 
	\begin{equation}\label{eq:2030}
		e(\Omega_i^{jk}, \Omega_i^{j\ell}, \Omega_i^{k\ell})
		>
		\nu |\cP^{jk}||\cP^{j\ell}||\cP^{k\ell}|
	\end{equation}
	holds or fails. Due to $|J|\lra (4t_1, t)^4_2$ this either leads to the desired 
	set $J_\star$ of size $t$, or to a set $J_1\subseteq J$ of size $4t_1$ such 
	that~\eqref{eq:2030} holds for all $i<j<k<\ell$ in $J_1$.

	Let $J_1=X_1\dcup K_1\dcup L_1\dcup M_1$ be the (unique) partition of $J_1$ into sets 
	of size $t_1$ satisfying $X_1<K_1<L_1<M_1$. 
	Now for every $x\in X_1$ the reduced subhypergraph $\cA_x$ of $\cA$ with 
	index set $K_1\dcup L_1\dcup M_1$, vertex classes inherited from $\cA$, 
	and constituents 
	$\cA_x^{k\ell m}=\cA^{k\ell m}[\Omega_x^{k\ell},\Omega_x^{km}, \Omega_x^{\ell m}]$
	is~$(\nu,\vvv)$-tridense. Therefore, Lemma~\ref{lem:crossinhabited} 
	applied to $t_2$, $1$, $\nu$ here in place of $t$, $r$, $\mu$ there
	yields subsets $X_2\subseteq X_1$,~$K_2\subseteq K_1$,~$L_2\subseteq L_1$, 
	and~$M_2\subseteq M_1$ of size~$t_2$ and a triple of transversals 
	$\cT(K_2,L_2)\cU(K_2,M_2)\cV(L_2, M_2)$ which is inhabited in every $\cA_x$
	with $x\in X_2$. 

	Owing to the definition of the constituents of these reduced hypergraphs
	this means that for all $(x, k, \ell, m)\in X_2\times K_2\times L_2\times M_2$
	we have 
\[
		T^{k\ell}\in \Omega_x^{k\ell}\,,
		\quad
		U^{km}\in \Omega_x^{km}\,,
		\quad
		V^{\ell m}\in \Omega_x^{\ell m}\,,
		\quad \text{ and } \quad 
		T^{k\ell}U^{km}V^{\ell m}\in E(\cA^{k\ell m})\,. 
	\]
In other words, all four triples of transversals
	\begin{multline*}
		\cQ(X_2,K_2)\cR(X_2,L_2)\cT(K_2,L_2)\,, 
		\qquad 
		\cQ(X_2,K_2)\cR(X_2,M_2)\cU(K_2,M_2)\,, \\
		\cQ(X_2,L_2)\cR(X_2,M_2)\cV(L_2,M_2)\,,
		\qqand
		\cT(K_2,L_2)\cU(K_2,M_2)\cV(L_2, M_2)
	\end{multline*}
	are inhabited in $\cA$. 

	We successively apply Corollary~\ref{cor:cleaning} to these four triples of 
	inhabited transversals with $\eps$, $\nu$, $1/3+\eps$ here in place of 
	$\delta$, $\mu$, $d$ there. Each of these applications shrinks the sets of indices 
	still under consideration and eventually we obtain sets~$X_3$,~$K_3$,~$L_3$, 
	and~$M_3$ of size~$2$, which satisfy~\ref{it:cleaning1} and~\ref{it:cleaning2} 
	of Corollary~\ref{cor:cleaning} for all those four inhabited triples of transversals.
	Let us write $X_3=\{x, x'\}$, $K_3=\{k, k'\}$, $L_3=\{\ell, \ell'\}$, and 
	$M_3=\{m, m'\}$.

	Now our assumption on the transversals $\cQ$ and $\cR$ yields 
	\[
		\vert N(Q^{xk},Q^{x'k})\cap N(R^{x\ell}, R^{x'\ell})\vert 
		\geq 
		\eps\vert\cP^{xx'}\vert
	\]
	and thus the pair~$\cQ(X_3,K_3)\cR(X_3,L_3)$ has $\eps$-intersecting links.
	So by~\ref{it:cleaning1} of Corollary~\ref{cor:cleaning} applied to $\cQ\cR\cT$
	the pairs $\cQ(X_3,K_3)\cT(K_3, L_3)$ and $\cR(X_3,L_3)\cT(K_3, L_3)$ 
	have~$\eps$-disjoint links. Similarly, the pairs $\cQ(X_3,K_3)\cR(X_3,M_3)$
	and $\cQ(X_3,L_3)\cR(X_3,M_3)$ have $\eps$-intersecting links, whereas the
	pairs $\cQ(X_3,K_3)\cU(K_3, M_3)$, $\cR(X_3,M_3)\cU(K_3, M_3)$,
	$\cQ(X_3,L_3)\cV(L_3, M_3)$, and $\cR(X_3,M_3)\cV(L_3, M_3)$ 
	have $\eps$-disjoint links.

	Let us now look at the three subsets 
	$N(Q^{xk}, Q^{xk'})$, $N(T^{k\ell}, T^{k'\ell})$, and $N(U^{km}, U^{k'm})$ 
	of~$\cP^{kk'}$. As $\cA$ is $(1/3+\eps, \ee)$-dense, each of them has 
	at least the size $(1/3+\eps)|\cP^{kk'}|$. Moreover, the fact that  
	$\cQ(X_3,K_3)\cT(K_3, L_3)$ and $\cQ(X_3,K_3)\cU(K_3, M_3)$ have $\eps$-disjoint
	links implies 
\[
		|N(Q^{xk}, Q^{xk'})\cap N(T^{k\ell}, T^{k'\ell})| <\eps|\cP^{kk'}| 
		\qand
		|N(Q^{xk}, Q^{xk'})\cap N(U^{km}, U^{k'm})| <\eps|\cP^{kk'}|\,.
	\]
For all these reasons we have 
	$|N(T^{k\ell}, T^{k'\ell})\cap N(U^{km}, U^{k'm})| \ge \eps|\cP^{kk'}|$
	and, hence, the links of $\cT(K_3, L_3)\cU(K_3,M_3)$ are $\eps$-intersecting.  

	Arguing similarly with the subsets 
	$N(R^{xm}, R^{xm'})$, $N(U^{km}, U^{km'})$, and $N(V^{\ell m}, V^{\ell m'})$
	of~$\cP^{mm'}$ one can show that the pair $\cU(K_3, M_3)\cV(L_3,M_3)$
	has $\eps$-intersecting links as well. 
	Thus the  triple $\cT\cU\cV$ yields two pairs of $\eps$-intersecting links, contradicting 
	Corollary~\ref{cor:cleaning}\,\ref{it:cleaning1}.
\end{proof}

We proceed with a related result that, given two transversals $\cQ(J)$, $\cS(J)$,
addresses holes composed of sets of the form $\Omega^{ij}_x=N(Q^{ix}, S^{xj})$,
where $i<x<j$. The proof is very similar to the previous one, but towards the end 
we shall need an additional argument.  

\vbox{
\begin{lemma}\label{lem:2150}
	Given $\eps>0$, $\nu>0$, and $t\in \NN$ let~$\cA$ 
	be an $\eps$-wicked reduced hypergraph with index set $I$. 
	If $J\subseteq I$ is sufficiently large and $\cQ(J)$, $\cS(J)$ are 
	two transversals on $J$ such that all $i<j<k<\ell$ from $J$ satisfy 
	\begin{equation}\label{eq:2137}
    \vert N(Q^{ij}, Q^{ik})\cap N(S^{j\ell}, S^{k\ell})\vert
    \geq 
    \eps\vert \cP^{jk}\vert
    \qand
    \vert N(S^{ik}, S^{i\ell})\cap N(S^{jk}, S^{j\ell})\vert
    \geq 
    \eps\vert \cP^{k\ell}\vert\,,
	\end{equation}
	then there is a set $J_\star\subseteq J$ of size $t$ such that 
	we have  
	\[
  		e\bigl(N(Q^{ix}, S^{xj}), N(Q^{ix}, S^{xk}),  N(Q^{jy}, S^{yk})\bigr) 
  		\le 
		\nu |\cP^{ij}||\cP^{ik}||\cP^{jk}| 
	\]
	for all $i<x<j<y<k$ in $J_\star$.
\end{lemma}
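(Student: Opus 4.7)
The plan is to imitate the proof of Lemma~\ref{lem:2148}, the main differences coming from the presence of five indices $i<x<j<y<k$ and the asymmetry of the two conditions in~\eqref{eq:2137}. Fix a hierarchy $|J|\gg t_1\gg t_2\gg t,\eps^{-1},\nu^{-1}$ and set $\Omega^{ab}_w=N(Q^{aw},S^{wb})$ whenever $a<w<b$. Colour each $5$-tuple $i<x<j<y<k$ from $J$ according to whether
\[
e(\Omega^{ij}_x,\Omega^{ik}_x,\Omega^{jk}_y)>\nu\vert\cP^{ij}\vert\vert\cP^{ik}\vert\vert\cP^{jk}\vert
\]
holds or fails. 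Ramsey's theorem for $5$-uniform hypergraphs either furnishes the desired set $J_\star$ directly or a subset $J_1\subseteq J$ of size at least $5t_1$ on which the strict inequality always holds; we derive a contradiction in the latter case.

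Partition $J_1=I_1\dcup X_1\dcup J'_1\dcup Y_1\dcup K_1$ into five consecutive blocks each of size $t_1$. For every $(x,y)\in X_1\times Y_1$ the reduced subhypergraph $\cA_{xy}\subseteq\cA$ with index set $I_1\dcup J'_1\dcup K_1$ and constituents $\cA_{xy}^{ijk}=\cA^{ijk}[\Omega^{ij}_x,\Omega^{ik}_x,\Omega^{jk}_y]$ is $(\nu,\vvv)$-tridense on $(I_1,J'_1,K_1)$, so Lemma~\ref{lem:crossinhabited} applied with $t_2$, $r=2$, $\nu$ in place of $t,r,\mu$ and the distinguished sets $X_1,Y_1$ delivers size-$t_2$ subsets $I_2\subseteq I_1$, $J_2\subseteq J'_1$, $K_2\subseteq K_1$, $X_2\subseteq X_1$, $Y_2\subseteq Y_1$ and a triple of transversals $\cT(I_2,J_2)\cU(I_2,K_2)\cV(J_2,K_2)$ inhabited in $\cA_{xy}$ for every $(x,y)\in X_2\times Y_2$. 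Unpacking the definitions one checks that the four triples
\[
\cQ(I_2,X_2)\cS(X_2,J_2)\cT(I_2,J_2)\,,\qquad \cQ(I_2,X_2)\cS(X_2,K_2)\cU(I_2,K_2)\,,
\]
\[
\cQ(J_2,Y_2)\cS(Y_2,K_2)\cV(J_2,K_2)\,,\qand \cT(I_2,J_2)\cU(I_2,K_2)\cV(J_2,K_2)
\]
are all inhabited in $\cA$. I then apply Corollary~\ref{cor:cleaning} successively to each of them, with $\eps,\nu,1/3+\eps$ in place of $\delta,\mu,d$, to pass to two-element subsets $I_3,X_3,J_3,Y_3,K_3$ on which properties~\ref{it:cleaning1} and~\ref{it:cleaning2} of that corollary hold simultaneously for all four triples.

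The first part of~\eqref{eq:2137}, substituted with indices drawn successively from $I_3\cup X_3\cup J_3$, $I_3\cup X_3\cup K_3$, and $J_3\cup Y_3\cup K_3$, shows that each of the pairs $\cQ(I_3,X_3)\cS(X_3,J_3)$, $\cQ(I_3,X_3)\cS(X_3,K_3)$, $\cQ(J_3,Y_3)\cS(Y_3,K_3)$ has $\eps$-intersecting links. By Corollary~\ref{cor:cleaning}\ref{it:cleaning1} each of these must therefore be the unique intersecting pair of its inhabited triple, and so all remaining pairs have $\eps$-disjoint links. In particular, in $\cP^{ii'}$ the three sets $N(Q^{ix},Q^{i'x}),N(T^{ij},T^{i'j}),N(U^{ik},U^{i'k})$ all have density at least $1/3+\eps$ while the intersections $|N(Q)\cap N(T)|$ and $|N(Q)\cap N(U)|$ are below $\eps|\cP^{ii'}|$, so inclusion--exclusion forces $|N(T^{ij},T^{i'j})\cap N(U^{ik},U^{i'k})|\ge\eps|\cP^{ii'}|$, and the pair $\cT(I_3,J_3)\cU(I_3,K_3)$ has $\eps$-intersecting links.

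To contradict Corollary~\ref{cor:cleaning}\ref{it:cleaning1} applied to the fourth inhabited triple $\cT\cU\cV$ it remains to exhibit a second intersecting pair, and this is where the second part of~\eqref{eq:2137} enters. The previously established $\eps$-disjointness of $\cU\cS$ and $\cV\cS$ in $\cP^{kk'}$ gives
\[
\vert N(U^{ik},U^{ik'})\cap N(S^{xk},S^{xk'})\vert<\eps\vert\cP^{kk'}\vert\qquad\text{and}\qquad\vert N(V^{jk},V^{jk'})\cap N(S^{yk},S^{yk'})\vert<\eps\vert\cP^{kk'}\vert\,,
\]
while the second part of~\eqref{eq:2137} with $i=x,\,j=y,\,k=k,\,\ell=k'$ yields $\vert N(S^{xk},S^{xk'})\cap N(S^{yk},S^{yk'})\vert\ge\eps\vert\cP^{kk'}\vert$. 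The main obstacle of the proof is to combine these three pieces of information into the desired bound $\vert N(U^{ik},U^{ik'})\cap N(V^{jk},V^{jk'})\vert\ge\eps\vert\cP^{kk'}\vert$; a naive four-set inclusion--exclusion is just too weak, so the intended route likely exploits~\eqref{eq:2137} more heavily, presumably via Corollary~\ref{cor:equivalent} and the Union Lemma~\ref{lem:union}, to fuse the $\cS$-links with various shared indices into a single $(\mu,1/3+\eps)$-hole $\Omega_S$ in $\cP^{kk'}$ that is still $\eps$-disjoint from both $N(U^{ik},U^{ik'})$ and $N(V^{jk},V^{jk'})$. A standard three-set pigeonhole then yields $\eps$-intersecting links for the pair $\cU(I_3,K_3)\cV(J_3,K_3)$, contradicting Corollary~\ref{cor:cleaning}\ref{it:cleaning1} for the triple $\cT\cU\cV$ and completing the proof.
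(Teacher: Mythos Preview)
Your overall strategy matches the paper almost exactly: the Ramsey step on quintuples, the partition into five ordered blocks, the application of Lemma~\ref{lem:crossinhabited} with $r=2$, the four resulting inhabited triples, the successive cleaning via Corollary~\ref{cor:cleaning}, and the three-set inclusion--exclusion in $\cP^{ii'}$ forcing $\cT\cU$ to have $\eps$-intersecting links all appear verbatim in the paper. You have also correctly located the remaining difficulty and correctly named Corollary~\ref{cor:equivalent} as the relevant tool; the Union Lemma, however, plays no r\^ole.

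The genuine gap is that your hierarchy is one level too short. By shrinking to two-element sets $I_3,X_3,J_3,Y_3,K_3$ immediately after cleaning you have left yourself no room to invoke Corollary~\ref{cor:equivalent}, which requires a sufficiently large index set. The paper inserts an additional level $t_3$ with $t_2\gg t_3\gg t,\eps^{-1},\nu^{-1}$, assumes without loss of generality that $\nu\ll\eps$ (decreasing $\nu$ only strengthens the lemma), and keeps the post-cleaning sets of size~$t_3$.

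With that adjustment the finish is clean and does not involve fusing anything. Fix arbitrary $i\in I_3$, $x\in X_3$, $j\in J_3$, $y\in Y_3$. By part~\ref{it:cleaning2} of the four cleanings the links $\Lambda(\cU,K_3,i)$, $\Lambda(\cV,K_3,j)$, $\Lambda(\cS,K_3,x)$, $\Lambda(\cS,K_3,y)$ are $(\nu,1/3+\eps)$-holes on~$K_3$. Corollary~\ref{cor:equivalent} then yields a two-element $K_4\subseteq K_3$ on which $\equiv_{\eps,K_4}$ is an equivalence relation with at most two classes on these four holes. Your already established $\eps$-disjointness of $\cU(I_3,K_3)\cS(X_3,K_3)$ and $\cV(J_3,K_3)\cS(Y_3,K_3)$ gives $\Lambda(\cU,K_4,i)\not\equiv\Lambda(\cS,K_4,x)$ and $\Lambda(\cV,K_4,j)\not\equiv\Lambda(\cS,K_4,y)$, while the second half of~\eqref{eq:2137} applied to $x<y<k<k'$ (with $K_4=\{k,k'\}$) yields $\Lambda(\cS,K_4,x)\equiv\Lambda(\cS,K_4,y)$. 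With only two classes available this forces $\Lambda(\cU,K_4,i)\equiv\Lambda(\cV,K_4,j)$, so $\cU\cV$ is not $\eps$-disjoint and hence, by the cleaning dichotomy, is $\eps$-intersecting. Together with $\cT\cU$ this contradicts Corollary~\ref{cor:cleaning}\,\ref{it:cleaning1} for the triple $\cT\cU\cV$.
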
}

\begin{proof}
	Since decreasing $\nu$ makes the statement stronger, we may assume 
	that $\nu\ll\eps$.  
	Suppose that $|J|\gg t_1\gg t_2\gg t_3\gg t, \eps^{-1}, \nu^{-1}$. 
	This time we set $\Omega^{ij}_x=N(Q^{ix}, S^{xj})$ for all $i<x<j$ 
	from $J$ and colour the quintuples $i<x<j<y<k$ depending on whether 
	\begin{equation}\label{eq:2230}
	e(\Omega_x^{ij}, \Omega_x^{ik}, \Omega_y^{jk})
	>
	\nu |\cP^{ij}||\cP^{ik}||\cP^{jk}|
	\end{equation}
	holds or fails. Due to $|J|\lra (5t_1, t)^5_2$ this either leads to the desired 
	set $J_\star$ of size $t$, or to a set $J_1\subseteq J$ of size $5t_1$ such 
	that~\eqref{eq:2230} holds for all $i<x<j<y<k$ in $J_1$.

	Now we partition 
	\[
		J_1=K_1\dcup X_1\dcup L_1\dcup Y_1\dcup M_1
	\]
	into $t_1$-sets ordered by $K_1<X_1<L_1<Y_1<M_1$ and form for every 
	pair $(x, y)\in X_1\times Y_1$ the reduced subhypergraph $\cA_{xy}$ 
	of $\cA$ with index set $K_1\dcup L_1\dcup M_1$, vertex classes inherited 
	from $\cA$, 
	and with constituents 
	$\cA_{xy}^{k\ell m}=\cA^{k\ell m}[\Omega_x^{k\ell},\Omega_x^{km}, \Omega_y^{\ell m}]$.
	As these reduced hypergraphs are~$(\nu,\vvv)$-tridense,  
	Lemma~\ref{lem:crossinhabited} applied to $t_2$, $2$, $\nu$ here in place 
	of $t$, $r$, $\mu$ there yields subsets $K_2\subseteq K_1$, 
	$X_2\subseteq X_1$, $L_2\subseteq L_1$, $Y_2\subseteq Y_1$,
	and~$M_2\subseteq M_1$ of size~$t_2$ and a triple of transversals 
	$\cT(K_2,L_2)\cU(K_2,M_2)\cV(L_2, M_2)$, which is inhabited in every $\cA_{xy}$
	with $x\in X_2$ and $y\in Y_2$. 

	As in the proof of the foregoing lemma one observes that the four triples 
	\begin{multline*}
		\cQ(K_2,X_2)\cT(K_2,L_2)\cS(X_2,L_2)\,, 
		\qquad 
		\cQ(K_2,X_2)\cU(K_2,M_2)\cS(X_2,M_2)\,, \\
		\cQ(L_2,Y_2)\cV(L_2,M_2)\cS(Y_2,M_2)
		\qqand
		\cT(K_2,L_2)\cU(K_2,M_2)\cV(L_2, M_2)
	\end{multline*}
	are inhabited in $\cA$. 

	Again, we apply Corollary~\ref{cor:cleaning} successively to all these triples, 
	this time obtaining sets~$K_3$, $X_3$,~$L_3$,~$Y_3$, and~$M_3$ of 
	size~$t_3$, satisfying~\ref{it:cleaning1} and~\ref{it:cleaning2} of 
	Corollary~\ref{cor:cleaning} for these four triples of transversals. 
	As before the desired contradiction arises from the fact that the pairs 
	$\cT(K_3, L_3)\cU(K_3,M_3)$ and $\cU(K_3, M_3)\cV(L_3,M_3)$ 
	have $\eps$-intersecting links, contrary to 
	Corollary~\ref{cor:cleaning}~\ref{it:cleaning1}. 

	The first of these two facts can be proved in the usual way: By~\eqref{eq:2137}
	the pairs $\cQ(K_3,X_3)\cS(X_3,L_3)$ 
	and $\cQ(K_3,X_3)\cS(X_3,M_3)$ have $\eps$-intersecting links and, therefore,
	the pairs 
	$\cQ(K_3, X_3)\cT(K_3,L_3)$ and $\cQ(K_3, X_3)\cU(K_3,M_3)$
	have $\eps$-disjoint links. So for arbitrary $k, k'\in K_3$, $x\in X_3$, 
	$\ell\in L_3$, 
	and $m\in M_3$ the subsets $N(Q^{kx}, Q^{k'x})$, $N(T^{k\ell}, T^{k'\ell})$,
	and $N(U^{km}, U^{k'm})$ of $\cP^{kk'}$ have at least the size $(1/3+\eps)|\cP^{kk'}|$
	and the first of them intersects the two other ones in less than $\eps|\cP^{kk'}|$
	vertices each. 
	This yields 
	\[
		|N(T^{k\ell}, T^{k'\ell})\cap N(U^{km}, U^{k'm})|> \eps|\cP^{kk'}|
	\]
	and thus the pair $\cT(K_3, L_3)\cU(K_3,M_3)$ has indeed {$\eps$-intersecting} links.
	
	It is less obvious, however, that the pair $\cU(K_3, M_3)\cV(L_3,M_3)$ has 
	$\eps$-intersecting links as well. To confirm this, we pick arbitrary vertices 
	$k\in K_3$, $x\in X_3$, $\ell\in L_3$, $y\in Y$. Due to $\nu\ll\eps$  
	we can invoke Corollary~\ref{cor:equivalent} 
	and pass to a subset $M_4\subseteq M_3$ of size $2$ such that  
	$\equiv=\equiv_{\eps, M_4}$ is an equivalence relation with at most two 
	equivalence classes on the set of $\nu$-holes
\[
		\bigl\{\Lambda(\cU, M_4, k), \Lambda(\cV, M_4, \ell), 
		\Lambda(\cS, M_4, x), \Lambda(\cS, M_4, y)\bigr\}\,.
	\]

	By the left statement in~\eqref{eq:2137} the pairs 	
	$\cQ(K_3,X_3)\cS(X_3,M_3)$ and $\cQ(L_3,Y_3)\cS(Y_3,M_3)$
	have $\eps$-intersecting links and, hence, by our application of 
	Corollary~\ref{cor:cleaning} to the triples $\cQ\cU\cS$ 
	and $\cQ\cV\cS$ the pairs $\cU(K_3,M_3)\cS(X_3,M_3)$ and 
	$\cV(L_3,M_3)\cS(Y_3,M_3)$ have $\eps$-disjoint links, for which reason 
\begin{equation}\label{eq:0015}
		\Lambda(\cU, M_4, k)\not\equiv \Lambda(\cS, M_4, x)
		\qand
		\Lambda(\cV, M_4, \ell)\not\equiv \Lambda(\cS, M_4, y)\,.
	\end{equation}
Moreover, writing $M_4=\{m, m'\}$ the right part of~\eqref{eq:2137} 
	yields 
	\[
		|N(S^{xm}, S^{xm'})\cap N(S^{ym}, S^{ym'})| 
		\ge 
		\eps\vert \cP^{mm'}\vert\,,
	\]
	whence $\Lambda(\cS, M_4, x)\equiv \Lambda(\cS, M_4, y)$. 
	Together with~\eqref{eq:0015} this discloses 
	\[
		\Lambda(\cU, M_4, k)\equiv \Lambda(\cV, M_4, \ell)
	\]
	and, consequently, $\cU(K_3, M_3)\cV(L_3,M_3)$ has 
	$\eps$-intersecting links, as desired. 
\end{proof}

\subsection{Two large disjoint holes}
\label{sec:bicolourisation}
In this section, we establish the existence of two essentially disjoint holes such that most cherries 
in each hole have a large neighbourhood in the other hole. 
For that we consider the following sets of unwanted cherries. 

Given $\mu$-holes~$\Phi$ and~$\Psi$ on~$J$,~$\gamma>0$, and indices~$ijk\in J^{(3)}$ a cherry~$(P^{ij}, P^{ik})\in \cP^{ij}\times \cP^{ik}$ is~\emph{$\gamma$-bad} 
if either 
\begin{align*}
    (P^{ij}, P^{ik})\in \Phi^{ij}\times \Phi^{ik} \text{ and } \vert N(P^{ij},P^{ik})\setminus \Psi^{jk}\vert&\geq \gamma\vert \cP^{jk}\vert\\
     \text{or}\quad(P^{ij}, P^{ik})\in \Psi^{ij}\times \Psi^{ik} \text{ and }  \vert N(P^{ij},P^{ik})\setminus \Phi^{jk}\vert&\geq \gamma\vert \cP^{jk}\vert\,.
\end{align*}

For~$i<j<k$ we denote the sets of all~$\gamma$-bad \emph{left}, \emph{middle}, and \emph{right} cherries by
\[
	\ccB^{ijk}(\Phi, \Psi, \gamma) \subseteq \cP^{ij}\times\cP^{ik}\,, 
	\quad
	\ccC^{ijk}(\Phi, \Psi, \gamma) \subseteq \cP^{ij}\times\cP^{jk}\,,
	\ \text{and}\
	\ccD^{ijk}(\Phi, \Psi, \gamma) \subseteq \cP^{ik}\times\cP^{jk}\,.
\]

The following lemma shows that given two disjoint holes~$\Phi$ and~$\Psi$ of width at least~$1/3+\eps$ there are either (for a large subset of indices) few $\gamma$-bad cherries 
or there are two other holes covering substantially more space. It might be helpful
to point out that eventually we will only use this lemma for $\gamma=\eps/12$.

\begin{lemma}\label{lem:bad}
For every~$\mu$,~$\eps\geq \gamma>0$ and~$t\in \NN$ there is~$\nu>0$ such that the following holds. Suppose~$\cA$ is an~$\eps$-wicked reduced hypergraph with index set~$I$ and for sufficiently large~$J\subseteq I$ we are given~$\eps$-disjoint 
$(\nu, 1/3+\eps)$-holes~$\Phi$ and~$\Psi$ on $J$. 

Then, there exists a subset~$J_\star\subseteq J$ of size~$t$ such that one of the following alternatives occurs.
\begin{enumerate}[label=\Alabel]
    \item\label{it:newhole}
    There exist two~$\eps$-disjoint~$(\mu,1/3+\eps)$-holes~$\Phi_\star$ and~$\Psi_\star$ 
    on $J_\star$ such that
    \[
    	\vert\Phi_\star^{ij}\cup\Psi_\star^{ij}\vert
    	\geq
    	\vert\Phi^{ij}\cup\Psi^{ij}\vert+\frac{\gamma}{2}\vert\cP^{ij}\vert
	 \]
    for every~$ij\in J_\star^{(2)}$
    \item \label{it:fewbad} or for all~$i<j<k$ in~$J_\star$ the sets of~$\gamma$-bad cherries satisfy
    \begin{multline*}
      \vert \ccB^{ijk}(\Phi, \Psi, \gamma)\vert\leq \mu\vert\cP^{ij}\vert\vert\cP^{ik}\vert\,, 
      \quad 
      \vert \ccC^{ijk}(\Phi, \Psi,\gamma)\vert \leq  \mu\vert\cP^{ij}\vert\vert\cP^{jk}\vert\,, \\
      \text{and}\quad
      \vert \ccD^{ijk}(\Phi, \Psi, \gamma)\vert \leq  \mu\vert\cP^{ik}\vert\vert\cP^{jk}\vert\,.     
    \end{multline*}
\end{enumerate}
\end{lemma}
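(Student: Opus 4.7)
The overall plan is to argue that either the three inequalities of~(B) hold on a large subset of~$J$, or an abundance of bad cherries lets us construct a third hole~$\Omega$ that strictly enlarges $\Phi\cup\Psi$, thereby supplying~(A). I fix auxiliary thresholds $t\ll t_3\ll t_2\ll t_1$ and $\nu>0$ small enough that, in particular, $\nu\ll\mu\gamma$ and $\nu\ll\eps$. Apply a multi-colour Ramsey to the triples of~$J$ whose colour on~$ijk$ records which of the three inequalities of~(B) are violated, which side ($\Phi\times\Phi$ or $\Psi\times\Psi$) supplies at least half of the bad cherries of the violated type, and whether most of those bad cherries have $|N(P,P')\cap(\Phi\setminus\Psi)|\ge(\gamma/2)|\cP|$ (``Type~I'') or $|N(P,P')\setminus(\Phi\cup\Psi)|\ge(\gamma/2)|\cP|$ (``Type~II''). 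If the resulting monochromatic $J_1\subseteq J$ of size~$t_1$ satisfies all of~(B), any $t$-subset finishes the proof. Otherwise, by the obvious $\Phi\leftrightarrow\Psi$ and cherry-position symmetries (the middle and right cherry cases being treated via Lemma~\ref{lem:2150} and its analogue for right cherries), I may assume every triple in~$J_1$ carries at least $(\mu/4)|\cP^{ij}||\cP^{ik}|$ Type~II bad left cherries inside $\Phi\times\Phi$; the Type~I alternative would produce of order $\mu\gamma|\cP|^3$ edges inside $\cA^{ijk}[\Phi^{ij},\Phi^{ik},\Phi^{jk}]$, contradicting that~$\Phi$ is a $\nu$-hole for $\nu\ll\mu\gamma$.

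To produce two transversals inside~$\Phi$ forming bad cherries, I form the auxiliary reduced hypergraph~$\cA'$ on~$J_1$ whose constituent $\cA'^{ijk}$ consists of those edges $P^{ij}P^{ik}P^{jk}\in E(\cA^{ijk})$ with $(P^{ij},P^{ik})$ a Type~II bad cherry in $\Phi^{ij}\times\Phi^{ik}$ and $P^{jk}\in N(P^{ij},P^{ik})\setminus(\Phi^{jk}\cup\Psi^{jk})$. Double counting shows $\cA'$ is $(\mu\gamma/8,\vvv)$-dense. Apply Lemma~\ref{lem:inhabitedtransversals} to~$\cA'$ with the $\eps$-exceptional right cherries of~$\Phi$ declared forbidden; by~\eqref{eq:except-cherries} there are at most $(\nu/\eps)|\cP^{ik}||\cP^{jk}|$ such cherries per pair of classes, which is arbitrarily sparse once $\nu$ is small. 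This yields $I_\star\subseteq J_1$ of size~$t_2$ and an inhabited triple $\cQ(I_\star)\cR(I_\star)\cS(I_\star)$ in~$\cA'$ with $\cQ,\cR\subseteq\Phi$ both avoiding $\ccR(\Phi,\eps)$. For any $i<j<k<\ell$ in $I_\star$, the avoidance of exceptional right cherries together with the $\ee$-density of~$\cA$ forces each of $N(Q^{ik},Q^{jk})$ and $N(R^{i\ell},R^{j\ell})$ to have cardinality at least $(1/3+\eps)|\cP^{ij}|$ while meeting $\Phi^{ij}$ in fewer than $\eps|\cP^{ij}|$ vertices; consequently each sits, up to $\eps|\cP^{ij}|$ exceptions, inside $\cP^{ij}\setminus\Phi^{ij}$, a set of size at most $(2/3-\eps)|\cP^{ij}|$. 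A one-line inclusion--exclusion then supplies hypothesis~\eqref{eq:2103} of Lemma~\ref{lem:2148}. Verifying~\eqref{eq:2103} is, in my estimation, the main obstacle; once it is in hand the rest is essentially routine.

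Lemma~\ref{lem:2148} now delivers $J_2\subseteq I_\star$ of size~$t_3$ such that, fixing $i_0:=\min J_2$ and $J_3:=J_2\setminus\{i_0\}$, the sets $\Omega^{ab}:=N(Q^{i_0 a},R^{i_0 b})$ for $ab\in J_3^{(2)}$ form a $(\nu_0,1/3+\eps)$-hole~$\Omega$ on~$J_3$, where $\nu_0$ can be preassigned as small as required. The Type~II property of the specific bad cherry $(Q^{i_0 a},R^{i_0 b})$ yields $|\Omega^{ab}\setminus(\Phi^{ab}\cup\Psi^{ab})|\ge(\gamma/2)|\cP^{ab}|$ throughout $J_3^{(2)}$. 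I then invoke Corollary~\ref{cor:adhoc} on the triple $\Phi$, $\Psi$, $\Omega$ over~$J_3$, with its output parameter matched to the~$\mu$ of Lemma~\ref{lem:bad}, to obtain $J_\star\subseteq J_3$ of size~$t$ on which either $(\Phi_\star,\Psi_\star):=(\Phi\cup\Omega,\Psi)$ or $(\Phi_\star,\Psi_\star):=(\Phi,\Psi\cup\Omega)$ is a pair of $\eps$-disjoint $(\mu,1/3+\eps)$-holes. In either alternative $\Phi_\star\cup\Psi_\star=\Phi\cup\Psi\cup\Omega$, so the lower bound on $|\Omega\setminus(\Phi\cup\Psi)|$ yields
\[
  \vert\Phi_\star^{ij}\cup\Psi_\star^{ij}\vert \;\ge\; \vert\Phi^{ij}\cup\Psi^{ij}\vert + \tfrac{\gamma}{2}\vert\cP^{ij}\vert
\]
for every $ij\in J_\star^{(2)}$, establishing~(A).
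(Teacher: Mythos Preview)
Your proof is correct and follows essentially the same strategy as the paper: locate transversals $\cQ,\cR$ inside one hole forming bad cherries, verify the intersection hypothesis~\eqref{eq:2103} of Lemma~\ref{lem:2148} (resp.~\eqref{eq:2137} of Lemma~\ref{lem:2150} in the middle case), extract the hole $\Omega$ from the resulting links, and finish with Corollary~\ref{cor:adhoc}. Two small organisational differences are worth noting. First, your Type~I/Type~II dichotomy plus the observation that Type~I contradicts the $\nu$-hole property achieves exactly what the paper does more directly by excluding the few cherries in $\ccL^{ijk}(\Phi,\gamma/2)$ from the auxiliary reduced hypergraph; the paper's route avoids one extra Ramsey colour and a separate contradiction argument. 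Second, for the right-cherry case the paper does not invoke an analogue of Lemma~\ref{lem:2150} but simply reverses the linear order on $I$, which interchanges left and right cherries and reduces that case to the one you wrote out in full.
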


\begin{proof}
Given~$\mu$,~$\eps\geq \gamma>0$ and $t$ we fix auxiliary 
integers~$t_1, t_2, t_3, t_4$, and we choose~$\nu$ to satisfy
\[
	\eps^{-1}, \mu^{-1}, \gamma^{-1}, t \ll t_4 \ll t_3 \ll t_2 \ll t_1, \nu^{-1}\,.
\]
Let~$\cA$,~$J\subseteq I$,~$\Phi$, and~$\Psi$ be as in the statement of the lemma,
where $J$ is so large that $|J|\lra (t_1,t_1,t_1, t)^3_4$. 
We suppose that~\ref{it:fewbad} fails and intend to derive~\ref{it:newhole}. 

Our assumption on the size of $J$ combined with the failure of~\ref{it:fewbad}   
yields a subset~$J_1\subseteq J$ of size~$t_1$ such that one of the following 
three statements holds:
\begin{enumerate}[label=\nlabel] 
	\item\label{it:1638-1} $\vert \ccB^{ijk}(\Phi, \Psi, \gamma)\vert > 
		\mu\vert\cP^{ij}\vert\vert\cP^{ik}\vert$ for all $i<j<k$ in~$J_1$,
	\item\label{it:1638-2} $\vert \ccC^{ijk}(\Phi, \Psi, \gamma)\vert >
		\mu\vert\cP^{ij}\vert\vert\cP^{jk}\vert$ for all $i<j<k$ in~$J_1$,
	\item\label{it:1638-3} or $\vert \ccD^{ijk}(\Phi, \Psi, \gamma)\vert>
		\mu\vert\cP^{ik}\vert\vert\cP^{jk}\vert$ for all $i<j<k$ in~$J_1$.
\end{enumerate}

As reversing the order $<$ on $I$ exchanges~\ref{it:1638-1} and~\ref{it:1638-3}, 
we may assume that one of the first two cases occurs. 

\smallskip
	\textit{\hskip 0.7em First Case: We have 
	$\vert \ccB^{ijk}(\Phi, \Psi, \gamma)\vert>\mu\,\vert\cP^{ij}\vert\vert\cP^{ik}\vert$ 
		for all $i<j<k$ in~$J_1$.}
\tinyskip

\noindent
For all $i<j<k$ in~$J_1$ at least one of the sets 
\[
	\ccB^{ijk}_\Phi = \ccB^{ijk}(\Phi,\Psi, \gamma)\cap \big(\Phi^{ij}\times\Phi^{ik}\big)		
	\quad\text{and}\quad
	\ccB^{ijk}_\Psi = \ccB^{ijk}(\Phi,\Psi, \gamma)\cap \big(\Psi^{ij}\times\Psi^{ik}\big)
\]
must consist of more than $\frac{\mu}{2}\vert\cP^{ij}\vert\vert\cP^{ik}\vert$ 
bad cherries. 
Thus, a further application of Ramsey's theorem allows us to assume that there is a 
set~$J_2\subseteq J_1$ of size~$t_2$ such that 
\begin{align}\label{eq:ramsey1bad}
    \vert\ccB^{ijk}_\Phi\vert > \frac{\mu}{2} \vert\cP^{ij}\vert\vert\cP^{ik}\vert
\end{align}
holds for all~$i<j<k$ in $J_2$.

\vbox{
\begin{clm}
	There are a set $J_3\subseteq J_2$ of size $t_3$ and transversals $\cQ(J_3)$,
	$\cR(J_3)$ such that 
	\begin{align}\label{eq:omega}
	\vert N(Q^{ij}, R^{ik}) \setminus (\Phi^{jk}\cup\Psi^{jk})\vert 
	\geq 
	\frac{\gamma}{2}\vert \cP^{jk}\vert
\end{align}
for all~$i<j<k$ in~$J_3$ and 
	\begin{align}\label{eq:interneighbours}
    \vert N(Q^{ik}, Q^{jk})\cap N(R^{i\ell}, R^{j\ell})\vert
    \geq 
    \eps\vert \cP^{ij}\vert
	\end{align}
	whenever $i<j<k<\ell$ are in $J_3$.
\end{clm}}

\begin{proof}
Let~$\cA_2$ be the auxiliary reduced hypergraph with index set $J_2$ and
vertex classes~$\cP^{ij}$ for $ij\in J_2^{(2)}$ whose constituents are defined 
by 
\[
	\{P^{ij},P^{ik},P^{jk}\}\in E(\cA_2^{ijk}) 
	\quad\Longleftrightarrow \quad
	(P^{ij},P^{ik})\in \ccB^{ijk}_\Phi\setminus \ccL^{ijk}(\Phi, \gamma/2)
\]
for all $i<j<k$ in $J_2$ and 
all $(P^{ij},P^{ik},P^{jk})\in \cP^{ij}\times \cP^{ik}\times \cP^{jk}$. 
Due to~\eqref{eq:except-cherries} we have 
\[
	\vert \ccL^{ijk}(\Phi, \gamma/2) \vert  
	\leq 
	\frac{2\nu}{\gamma}\vert \cP^{ij}\vert \vert\cP^{ik}\vert 
	\leq 
	\frac\mu4\vert \cP^{ij}\vert \vert\cP^{ik}\vert\
\]
for all~$i<j<k$ in~$J_2$ and together with~\eqref{eq:ramsey1bad} 
this establishes that~$\cA_2$ is~$(\mu/4, \vvv)$-dense. 

Together with
\[
	\vert \ccL^{ijk}(\Phi, \gamma/2) \vert  
	\leq 
	\frac{2\nu}{\gamma}\vert \cP^{ij}\vert \vert\cP^{ik}\vert
	\qqand 
	\vert \ccR^{ijk}(\Phi, \gamma/2) \vert  
	\leq 
	\frac{2\nu}{\gamma}\vert \cP^{ik}\vert \vert\cP^{jk}\vert\,,
\]
and $\nu\ll\gamma, \mu$ this shows that Lemma~\ref{lem:inhabitedtransversals}
yields a set~$J_3\subseteq J_2$ of size~$t_3$ and 
transversals~$\cQ(J_3)$, $\cR(J_3$), and~$\cS(J_3)$ that avoid~$\ccL(\Phi, \gamma/2)$ and~$\ccR(\Phi, \gamma/2)$ and form an inhabited triple in $\cA_2$.

In particular, we have $Q^{ij}R^{ik}S^{jk}\in E(\cA_2)$ 
for all~$i<j<k$ in~$J_3$ and thus 
\begin{align}\label{defQR}
    (Q^{ij},R^{ik})\in\ccB^{ijk}_\Phi\setminus \ccL^{ijk}(\Phi, \gamma/2)\,.    
\end{align}

By the definitions of~$\ccB^{ijk}_\Phi$ and~$\ccL^{ijk}(\Phi, \gamma/2)$ 
this tells us 
\[
	\vert N(Q^{ij}, R^{ik}) \setminus \Psi^{jk}\vert 
	\geq 
	\gamma\vert\cP^{jk}\vert 
	\qqand
	\vert N(Q^{ij}, R^{ik}) \cap \Phi^{jk}\vert 
	< 
	\frac{\gamma}{2}\vert\cP^{jk}\vert\,,
\]
and by subtracting these estimates one easily confirms~\eqref{eq:omega}.

Now let $i<j<k<\ell$ from $J_3$ be arbitrary. 
Since~$\cQ$ and~$\cR$ avoid~$\ccR(\Phi, \gamma/2)$, we have 
\[
	\vert N(Q^{ik}, Q^{jk})\cap \Phi^{ij}\vert
	\leq 
	\frac{\gamma}{2}\vert \cP^{ij}\vert
	< 
	\eps\vert \cP^{ij}\vert 
	\qand 
	\vert N(R^{i\ell}, R^{j\ell})\cap \Phi^{ij}\vert
	\leq 
	\frac{\gamma}{2}\vert \cP^{ij}\vert
	<
	\eps\vert \cP^{ij}\vert\,.
\]

Since each of the three subsets $N(Q^{ik}, Q^{jk})$, $N(R^{i\ell}, R^{j\ell})$, 
and $\Phi^{ij}$ of $\cP^{ij}$ has at least the size $(1/3+\eps)|\cP^{ij}|$, 
this implies~\eqref{eq:interneighbours}.
\end{proof}

Now Lemma~\ref{lem:2148} applied to $J_3$ and the transversals $\cQ(J_3)$, $\cR(J_3)$
yields a set $J_4^+\subseteq J_3$ of size $t_4+1$ such that
all $i<j<k<\ell$ in $J_4^+$ satisfy
\[
   e\bigl(N(Q^{ij}, R^{ik}), N(Q^{ij}, R^{i\ell}),  N(Q^{ik}, R^{i\ell})\bigr)
  	\le 
	\nu |\cP^{jk}||\cP^{j\ell}||\cP^{k\ell}| \,.
\]
Setting $x=\min(J_4^+)$, $J_4=J_4^+\setminus\{x\}$, 
and 
\[
	\Omega^{jk}=N(Q^{xj}, R^{xk})
\] 
for all $jk\in J_4^{(2)}$ we obtain 
\[
	 e(\Omega^{jk}, \Omega^{j\l},  \Omega^{k\l}) 
  	\le 
	\nu|\cP^{jk}||\cP^{j\l}||\cP^{k\l}| 
\]
for all $jk\l\in J_4^{(3)}$. In other words, 
the set $\Omega=\bigdcup_{jk\in J_4^{(2)}}\Omega^{ij}$ 
is a $(\nu, 1/3+\eps)$-hole. Moreover, by~\eqref{eq:omega} we have 
\[
	\big|\Omega^{jk}\setminus(\Phi^{jk}\cup\Psi^{jk})\big|
	\geq
	\frac{\gamma}{2}\big|\cP^{jk}\big|\,.
\]

Now by Corollary~\ref{cor:adhoc} there exists a subset~$J_\star\subseteq J_4$ 
of size~$t$ in which~$\Omega\cup\Phi$ 
and~$\Psi$ or~$\Omega\cup\Psi$ and~$\Phi$ are two~$\eps$-disjoint $\mu$-holes.
Due to~\eqref{eq:omega} this shows that~\ref{it:newhole} holds either 
for~$\Phi_\star = \Phi\cup \Omega $ and~$\Psi_\star=\Psi$, or for~$\Phi_\star=\Phi$ 
and~$\Psi_\star=\Psi\cup\Omega$.

\smallskip
	\textit{\hskip 0.7em Second Case:
	We have  $\vert \ccC^{ijk}(\Phi, \Psi, \gamma)\vert >  \mu\,\vert\cP^{ij}\vert\vert\cP^{jk}\vert$ for all~$i<j<k$ in~$J_1$.}
\tinyskip

\noindent
As before we consider the set of~$\eps$-bad cherries $\ccC_\Phi^{ijk}$ 
and~$\ccC_\Psi^{ijk}$ restricted to the respective 
holes and following the same Ramsey argument 
we find a subset~$J_2\subseteq J$ of size at least~$t_2$ for which we may assume that
\[
	\vert \ccC_\Phi^{ijk}\vert 
	> 
	\frac{\mu}{2}\vert\cP^{ij}\vert \vert \cP^{jk}\vert
\]
holds for all~$i<j<k$ in~$J_2$.

\begin{clm}
	There are a set $J_3\subseteq J_2$ of size $t_3$ and transversals $\cQ(J_3)$,
	$\cS(J_3)$ such that 
	\begin{align}\label{eq:omega-middle}
     \vert N(Q^{ij}, S^{jk}) \setminus (\Phi^{ik}\cup\Psi^{ik})\vert 
     \geq 
     \frac{\gamma}{2}\vert \cP^{ik}\vert
	\end{align}
	whenever $i<j<k$ are in $J_3$, and 	
	\begin{align}\label{eq:interneighbours2}
    \vert N(Q^{ij}, Q^{ik})\cap N(S^{j\ell}, S^{k\ell})\vert
    \geq 
    \eps\vert \cP^{jk}\vert
    \qand
    \vert N(S^{ik}, S^{i\ell})\cap N(S^{jk}, S^{j\ell})\vert
    \geq 
    \eps\vert \cP^{k\ell}\vert
\end{align}
	for all $i<j<k<\ell$ in $J_3$.
\end{clm}

\begin{proof} 
This time the constituents of our auxiliary reduced 
hypergraph~$\cA_2$ with index set~$J_2$ are defined by 
\[
	\{P^{ij},P^{ik},P^{jk}\}\in E(\cA_2^{ijk}) 
	\quad\Longleftrightarrow \quad
	(P^{ij},P^{jk})\in \ccC^{ijk}_\Phi\setminus \ccM^{ijk}(\Phi, \gamma/2)
\]
for~$i<j<k$ in $J_2$ (see Definition \ref{def:exceptional-cherries}). 
As in the first case 
Lemma~\ref{lem:inhabitedtransversals} leads to a set~$J_3\subseteq J_2$ 
of size~$t_3$ and transversals~$\cQ(J_3)$, $\cS(J_3)$ which 
satisfy~$(Q^{ij},S^{jk})\in \ccC^{ijk}_\Phi\setminus \ccM^{ijk}(\Phi, \gamma/2)$ 
for all~$i<j<k$ in~$J_3$ and avoid the left and right $(\gamma/2)$-exceptional 
cherries of $\Phi$. Again the first of these properties yields 
\[
	\vert N(Q^{ij}, S^{jk}) \setminus \Psi^{ik}\vert 
	\geq 
	\gamma\vert\cP^{ik}\vert 
	\qqand
	\vert N(Q^{ij}, S^{jk}) \cap \Phi^{ik}\vert 
	< 
	\frac{\gamma}{2}\vert\cP^{ik}\vert\,,
\]
and~\eqref{eq:omega-middle} follows upon subtraction. 

For the proof~\eqref{eq:interneighbours2} we fix four indices $i<j<k<\ell$ from $J_3$.
The subsets $N(Q^{ij}, Q^{ik})$, $N(S^{j\ell}, S^{k\ell})$, and $\Phi^{jk}$ 
of $\cP^{jk}$ have size at least $(1/3+\eps)|\cP^{jk}|$ and the third of them 
intersects the other two in less than $\eps|\cP^{jk}|$ vertices. This implies 
the left part of~\eqref{eq:interneighbours2}. The right side can be shown in the 
same way, looking at the sets $N(S^{ik}, S^{i\ell})$, $N(S^{jk}, S^{j\ell})$, 
and~$\Phi^{k\ell}$. 
\end{proof}

Now we define 
\[
	\Omega_x^{ij} = N(Q^{ix},S^{xj})\subseteq \cP^{ij}
\]
for all $i<x<y$ in $J_3$. Owing to Lemma~\ref{lem:2150} there exists a set
$J^+_4\subseteq J_3$ of size $2t_4-1$ such that  
\begin{align}\label{badramsey-m-1}
    e(\Omega_x^{ij}, \Omega_x^{ik}, \Omega_y^{jk})
    \le
    \nu \vert\cP^{ij}\vert\vert\cP^{ik}\vert\vert\cP^{jk}\vert
\end{align}
holds for all $i<x<j<y<k$ from $J_4^+$. Let $J_4^+=\{j(1), \dots, j(2t_1-1)\}$
enumerate the elements of $J_4^+$ in increasing order,  
let $J_4=\{j(1), j(3), \dots, j(2t_4-1)\}$ be the $t_4$-element subset of $J_4^+$
consisting of the elements occupying odd positions, and set 
\[
	\Omega^{j(2r-1)j(2s-1)}
	=
	\Omega^{j(2r-1)j(2s-1)}_{j(2r)}
	\quad \text{ for all } rs\in [t_4]^{(2)}\,.
\] 
By~\eqref{badramsey-m-1} the 
set $\Omega=\bigcup_{rs\in [t_4]^{(2)}} \Omega^{j(2r-1)j(2s-1)}$
is a $(\nu, 1/3+\eps)$-hole on $J_4$ and in view of~\eqref{eq:omega-middle}
we can finish as in the first case. 
\end{proof}

Now Lemma~\ref{lem:1247} followed by iterative applications of 
Lemma~\ref{lem:bad} leads to two nonequivalent holes with few bad cherries. 

\begin{cor}\label{cor:nobad}
For every~$\mu$,~$\eps\geq \gamma>0$ and~$t\in \NN$ the following holds. 
If~$\cA$ is an~$\eps$-wicked reduced hypergraph whose index set~$I$ is 
sufficiently large, 
then there exist a subset~$J\subseteq I$ of size~$t$ and~$\eps$-disjoint~$(\mu,1/3+\eps)$-holes~$\Phi$ and~$\Psi$ on $J$ such that
for all~$i<j<k$ in~$J$ the sets of~$\gamma$-bad cherries satisfy
    \begin{multline}
        \vert \ccB^{ijk}(\Phi, \Psi, \gamma)\vert 
        \leq
        \mu\vert\cP^{ij}\vert\vert\cP^{ik}\vert\,, 
        \quad 
        \vert \ccC^{ijk}(\Phi, \Psi, \gamma)\vert 
        \leq 
        \mu\vert\cP^{ij}\vert\vert\cP^{jk}\vert\,,\\
        \qand\quad 
        \vert \ccD^{ijk}(\Phi, \Psi, \gamma)\vert 
        \leq 
        \mu\vert\cP^{ik}\vert\vert\cP^{jk}\vert
        \,. \end{multline} 
\end{cor}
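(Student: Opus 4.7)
The plan is to combine Lemma~\ref{lem:1247} (which supplies an initial $\eps$-disjoint pair of holes) with iterated applications of Lemma~\ref{lem:bad}: each such application either terminates in the desired conclusion~\ref{it:fewbad}, or delivers conclusion~\ref{it:newhole}, in which case the new pair of holes has strictly larger union-coverage $|\Phi^{ij}\cup\Psi^{ij}|/|\cP^{ij}|$ on every remaining pair, by an additive amount of at least $\gamma/2$. Since this fractional coverage is bounded by~$1$, the density-increment alternative can occur at most $\lceil 2/\gamma \rceil$ times in a row.

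I would set $N := \lceil 2/\gamma \rceil + 1$ and then, by downward induction on $i$, construct parameters $\mu = \mu_N > \mu_{N-1} > \cdots > \mu_0 > 0$ and integers $t = t_N < t_{N-1} < \cdots < t_0$ as follows: given $\mu_i$ and $t_i$, let $\mu_{i-1}$ be the value of $\nu$ returned by Lemma~\ref{lem:bad} when applied with $\mu_i$, $\eps$, $\gamma$, $t_i$ in place of $\mu$, $\eps$, $\gamma$, $t$, and choose $t_{i-1}$ sufficiently large that this application of Lemma~\ref{lem:bad} is valid on any index set of that size. Finally, I would require $|I|$ to be large enough that Lemma~\ref{lem:1247} produces an $\eps$-disjoint pair of $(\mu_0, 1/3+\eps)$-holes on a subset of size $t_0$.

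Starting from this initial pair, I would iterate as follows. At step $i$ we have $\eps$-disjoint $(\mu_i, 1/3+\eps)$-holes $\Phi$, $\Psi$ on an index set of size $t_i$. Apply Lemma~\ref{lem:bad} with $\mu_{i+1}$, $\eps$, $\gamma$, $t_{i+1}$ in place of $\mu$, $\eps$, $\gamma$, $t$; the hypothesis that $\Phi$, $\Psi$ are $(\mu_i, 1/3+\eps)$-holes suffices by the construction of $\mu_i$. Alternative~\ref{it:fewbad} immediately closes the proof, since the resulting bad-cherry bounds use the parameter $\mu_{i+1} \le \mu$ and the index set has size $t_{i+1} \ge t$. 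Alternative~\ref{it:newhole} supplies new $\eps$-disjoint $(\mu_{i+1}, 1/3+\eps)$-holes on a subset of size $t_{i+1}$ whose union-coverage on every remaining pair has grown by at least $(\gamma/2)|\cP^{ij}|$, and we pass to step $i+1$.

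Since the union-coverage on any pair $ij$ that survives all iterations is bounded by~$|\cP^{ij}|$, the process cannot select alternative~\ref{it:newhole} at more than $\lceil 2/\gamma \rceil$ consecutive steps, so it must land in alternative~\ref{it:fewbad} before step $N$. No serious obstacle is expected beyond the backward bookkeeping of the parameter hierarchy; this is routine once one notices that Lemma~\ref{lem:bad} consumes holes of a fine quality~$\nu$ and only returns holes of a coarser quality~$\mu$, which forces the iteration to be planned starting from the final target~$\mu$ and working back to the quality that Lemma~\ref{lem:1247} must initially deliver.
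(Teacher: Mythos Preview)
Your proposal is correct and follows essentially the same approach as the paper: start from Lemma~\ref{lem:1247}, iterate Lemma~\ref{lem:bad}, and observe that alternative~\ref{it:newhole} can occur only boundedly often because the union-coverage is capped at~$1$. The only cosmetic differences are that the paper indexes its parameter chain in the opposite direction (setting $\mu_0=\mu$, $t_0=t$ and building outward to $\mu_s$, $t_s$ with $s=\lceil 4/\gamma\rceil$), and it phrases the termination as a minimality argument rather than a forward iteration; it also tracks $|\Phi^{ij}|+|\Psi^{ij}|$ instead of $|\Phi^{ij}\cup\Psi^{ij}|$, which accounts for the factor of~$2$ in their choice of~$s$ versus your~$N$.
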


\begin{proof}
By Lemma~\ref{lem:bad} there are functions $f\colon \RR_{>0}\times \NN\lra \RR_{>0}$
	and $g\colon \RR_{>0}\times \NN\lra \NN$ such that for all $t_\star\in \NN$,
	$\mu_\star\in \RR_{>0}$ the conclusion of Lemma~\ref{lem:bad} holds for 
	$\mu_\star$, $t_\star$, $f(\mu_\star, t_\star)$ and $g(\mu_\star, t_\star)$
	here in place of $\mu$, $t$, $\nu$ and $|J|$ there. 
	
	Starting with $\mu_0\!=\!\mu$ and $t_0=t$ we recursively set $\mu_{m+1}=f(\mu_m, t_m)$
	and \mbox{$t_{m+1}=g(\mu_m, t_m)$} for every integer $m\geq 0$.
	Without loss of generality we may assume that the sequence $(\mu_m)_{m\ge 0}$
	is decreasing and that $t_m\ge 2$ for every~$m$. 
	Setting $s=\lceil 4\gamma^{-1}\rceil$ we shall now prove the conclusion of 
	our corollary for $|J|\gg t_s, \mu_s^{-1}, \eps^{-1}$. 
	
	Due to Lemma~\ref{lem:1247} there are a set 
	$J_s\subseteq J$ of size $t_s$ and two $\eps$-disjoint $(\mu_s, 1/3+\eps)$-holes 
	on $J_s$. Thus there exists a least nonnegative integer $m\le s$ such that 
	there are a set $J_m\subseteq J$ of size $t_m$ and two $\eps$-disjoint 
	$(\mu_m, 1/3+\eps)$-holes $\Phi$, $\Psi$ on $J_m$ such that 
	$|\Phi^{ij}|+|\Psi^{ij}|> (s-m)\gamma |\cP^{ij}|/2$ holds for every 
	pair $ij\in J_k^{(2)}$. 
	
	As our choice of $s$ entails $s\gamma/2\ge 2$, we cannot have $m=0$. 
	Thus Lemma~\ref{lem:bad} leads to a set $J_{m-1}\subseteq J_m$ of size $t_{m-1}$
	such that either~\ref{it:newhole} or~\ref{it:fewbad} holds for $\mu_{m-1}$
	here in place of $\mu$ there. By the minimality 
	of $m$ alternative~\ref{it:newhole} is impossible. For this reason the restrictions
	of $\Phi$ and $\Psi$ to arbitrary $t$-element subsets of $J_{m-1}$ 
	are as desired.
\end{proof}

\subsection{Bicolourisation}
\label{ssec:pf-reduction}
It remains to argue that by taking a random preimage we can convert 
Corollary~\ref{cor:nobad} into Proposition~\ref{prop:reduction}.

\begin{proof}[Proof of Proposition~\ref{prop:reduction}]
Given~$\eps$ and~$t$ we take~$\gamma, \mu>0$ and~$\ell\in\NN$ such that
\[
	\gamma = \frac{\eps}{12} \qqand \eps, t^{-1} \gg \ell^{-1} \gg \mu
\]
and consider an $\eps$-wicked reduced hypergraph $\cA$ whose index set $I$ 
is sufficiently large. 
Due to Corollary~\ref{cor:nobad} there are a set~$J\subseteq I$ of size~$t$ 
and~$\eps$-disjoint~$\mu$-holes~$\Phi$, $\Psi$ on $J$ such that 
for all~$i<j<k$ in~$J$ we have 
\begin{multline}\label{fewbad}
    \vert \ccB^{ijk}(\Phi, \Psi, \gamma)\vert 
    \leq
    \mu\vert\cP^{ij}\vert\vert\cP^{ik}\vert\,, 
    \quad 
    \vert \ccC^{ijk}(\Phi, \Psi, \gamma)\vert 
    \leq
    \mu\vert\cP^{ij}\vert\vert\cP^{jk}\vert\,, \\
    \text{and}\quad 
    \vert \ccD^{ijk}(\Phi, \Psi, \gamma)\vert
    \leq
    \mu\vert\cP^{ik}\vert\vert\cP^{jk}\vert\,.
\end{multline}

Next we define a reduced subhypergraph~$\cA_1$ of $\cA$ admitting a bicolouring $\phi_1$
which satisfies, with only few exceptions, the minimum codegree 
condition $\tau_2(\cA_1, \phi_1)\ge 1/3+\eps/4$. To this end we consider for every 
pair~$ij\in J^{(2)}$ the sets  
\[
	\fR^{ij} = \Phi^{ij}\setminus \Psi^{ij} 
	\qand 
	\fB^{ij} = \Psi^{ij}\setminus \Phi^{ij}\,,
\]
and then we set~$\fR=\bigdcup_{ij\in J^{(2)}} \fR^{ij}$ 
as well as~$\fB=\bigdcup_{ij\in J^{(2)}} \fB^{ij}$.
Now let $\cA_1$ be the reduced hypergraph with index set~$J$, vertex 
classes~$\cP_1^{ij}=\fR^{ij}\dcup \fB^{ij}\subseteq \cP^{ij}$ for every~$ij\in J^{(2)}$, and edges
\[
	E(\cA_1)=E(\fR\cup\fB)\setminus  \bigl(E(\fR)\cup E(\fB)\bigr)\,.
\]
Since $\Phi$ and $\Psi$ are $\eps$-disjoint and have width at least $1/3+\eps$ we have 
\begin{equation}\label{eq:sP1}
	\big|\cP_1^{ij}\big|
	\geq 
	\Big(\frac{2}{3}+\eps\Big)\big|\cP^{ij}\big|
\end{equation}
for every $ij\in J^{(2)}$.

It is plain that the map $\phi_1\colon V(\cA_1)\lra\{\tred, \tblue\}$
defined by $\phi_1^{-1}(\tred)=\fR$ and $\phi_1^{-1}(\tblue)=\fB$ is 
a bicolouring of $\cA_1$.

\begin{clm}\label{clm:1517}
	In $\cA_1$ all monochromatic cherries $(P_1^{ij}, P_1^{ik})$ 
	that fail to be~$\gamma$-bad in $\cA$ have codegree  at least $(1/3+\eps/4)|\cP_1^{jk}|$.
\end{clm}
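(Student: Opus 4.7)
The plan is to handle the two colours symmetrically, so I focus on a red monochromatic cherry $(P_1^{ij},P_1^{ik})\in\fR^{ij}\times\fR^{ik}$. By the definition $E(\cA_1)=E(\fR\cup\fB)\setminus(E(\fR)\cup E(\fB))$, a vertex $P_1^{jk}\in\cP_1^{jk}$ extends this cherry to an edge of $\cA_1$ precisely when $\{P_1^{ij},P_1^{ik},P_1^{jk}\}\in E(\cA)$ and $P_1^{jk}\in\fB^{jk}$ (so the triple is not entirely red). Hence the $\cA_1$-codegree in question equals $|N_\cA(P_1^{ij},P_1^{ik})\cap\fB^{jk}|$, and the task reduces to bounding this from below by $(1/3+\eps/4)|\cP_1^{jk}|$.

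Writing $N=N_\cA(P_1^{ij},P_1^{ik})$ and $c=|\Phi^{jk}\cap\Psi^{jk}|/|\cP^{jk}|$, the $\eps$-disjointness of $\Phi$ and $\Psi$ forces $c<\eps$. I would estimate $|N\cap\fB^{jk}|$ in three short steps. First, the $(1/3+\eps,\ee)$-density of $\cA$ gives $|N|\geq(1/3+\eps)|\cP^{jk}|$. Second, since $(P_1^{ij},P_1^{ik})\in\Phi^{ij}\times\Phi^{ik}$ is not $\gamma$-bad, $|N\setminus\Psi^{jk}|<\gamma|\cP^{jk}|$, so $|N\cap\Psi^{jk}|\geq(1/3+\eps-\gamma)|\cP^{jk}|$. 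Third, removing the intersection with $\Phi^{jk}$ costs at most $|\Phi^{jk}\cap\Psi^{jk}|=c|\cP^{jk}|$, yielding
\[
|N\cap\fB^{jk}|\geq(1/3+\eps-\gamma-c)|\cP^{jk}|.
\]

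To convert this into a bound in terms of $|\cP_1^{jk}|$, I would use that $|\cP_1^{jk}|=|\Phi^{jk}\cup\Psi^{jk}|-|\Phi^{jk}\cap\Psi^{jk}|\leq(1-c)|\cP^{jk}|$, since $\Phi^{jk}\cup\Psi^{jk}\subseteq\cP^{jk}$. Thus it suffices to verify
\[
1/3+\eps-\gamma-c\;\geq\;(1/3+\eps/4)(1-c),
\]
which rearranges to $3\eps/4-\gamma\geq c(2/3-\eps/4)$. The right-hand side is strictly smaller than $2\eps/3$ because $c<\eps$, while with $\gamma=\eps/12$ (as fixed at the start of the proof of Proposition~\ref{prop:reduction}) the left-hand side equals exactly $2\eps/3$. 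The blue case is entirely symmetric, exchanging the roles of $\Phi\leftrightarrow\Psi$ and $\fR\leftrightarrow\fB$. The only delicate point is the final piece of arithmetic: the constants are tight, and the choice $\gamma=\eps/12$ is precisely calibrated so that this inequality just goes through.
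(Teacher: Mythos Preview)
Your proof is correct and follows essentially the same approach as the paper's: both identify $N_{\cA_1}(P_1^{ij},P_1^{ik})=N_\cA(P_1^{ij},P_1^{ik})\cap\fB^{jk}$, subtract off $|N\setminus\Psi^{jk}|<\gamma|\cP^{jk}|$ and $|\Phi^{jk}\cap\Psi^{jk}|$, and then use $|\cP_1^{jk}|\le |\cP^{jk}|-|\Phi^{jk}\cap\Psi^{jk}|$ to convert to the desired bound. The only difference is cosmetic arithmetic---the paper splits the lower bound as $(1/3+\eps/4)(|\cP^{jk}|-|\Phi^{jk}\cap\Psi^{jk}|)+(2/3)(\eps|\cP^{jk}|-|\Phi^{jk}\cap\Psi^{jk}|)$, while you normalise by $|\cP^{jk}|$ and verify the resulting inequality in $c$.
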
 

\begin{proof}
Suppose~$i<j<k$ 
and that~$(P_1^{ij}, P_1^{ik})=(R^{ij},R^{ik})\in \fR^{ij}\times \fR^{ik}$ 
is a red left cherry not belonging to $\ccB^{ijk}(\Phi,\Psi,\gamma)$. 
Due to 
\[
	\vert N_\cA(R^{ij},R^{ik})\setminus\Psi^{jk}\vert
	\leq
	\gamma\vert\cP^{jk}\vert
\]
we have 
\begin{align*}
    \vert N_{\cA_1}(R^{ij},R^{ik})\vert 
    &=\!
    \vert N_{\cA}(R^{ij},R^{ik})\cap \fB^{jk} \vert \\
    &\geq\!
    \vert N_{\cA}(R^{ij},R^{ik})\vert -
    \vert N_{\cA}(R^{ij},R^{ik})\setminus \Psi^{jk} \vert -
    \vert\Phi^{jk}\cap\Psi^{jk}\vert \\
    &\geq\!
    \left(\frac{1}{3}+\eps\right)\vert\cP^{jk}\vert -\gamma\vert\cP^{jk}\vert -
    \vert\Phi^{jk}\cap\Psi^{jk}\vert  \\
    &\ge\!
    \left(\frac13+\frac\eps4\right)\left(|\cP^{jk}|-|\Phi^{jk}\cap\Psi^{jk}|\right)
    +\frac23\left(\eps|\cP^{jk}|-|\Phi^{jk}\cap\Psi^{jk}|\right) \\
    &\ge\! 
    \left(\frac13+\frac\eps4\right)|\cP_1^{jk}|\,,
\end{align*}
where the penultimate inequality uses the definition of~$\gamma$ and 
the last inequality follows 
from~$\cP_1^{jk} \subseteq \cP^{jk}\setminus(\Phi^{jk}\cap\Psi^{jk})$. 
This concludes the proof for red left cherries and all other cases can 
be treated analogously.
\end{proof}

Similarly as in~\cite{Christiansurvey}*{Lemma~4.2} we will define the 
reduced hypergraph~$\cA_\star$ by taking the preimage of a random homomorphism  
$h\in\fA(\cA_1,\l)$. Recall from Definition~\ref{def:randompre}
that for every map $h\in\fA(\cA_1,\l)$ the 
associated reduced hypergraph~$\cA_h$ has index set $J$ and vertex classes 
$\cPb^{ij}$ of size $\ell$.

Observe that there is no $h\in\fA(\cA_1,\l)$ such that $\cA_h$ supports a~$K_5^{(3)}$,
because otherwise the homomorphism $h$ would show that~$\cA_1\subseteq \cA$ 
supports a~$K_5^{(3)}$ as well, contrary to $\cA$ being wicked.  
Furthermore, for every $h\in\fA(\cA_1,\l)$ the map $\phi_h=\phi_1\circ h$ is 
a bicolouring of~$\cA_h$. So it remains to show that if $h$ gets chosen uniformly 
at random, then with positive probability the event
\[
    \tau_2(\cA_h,\phi_h)\geq \frac{1}{3}+\frac{\eps}{8}
\]
occurs. We estimate for each cherry of $\cA_h$ the probability that it violates 
this condition.

\begin{clm} \label{clm:1526}
	If $ijk\in J^{(3)}$ and $(\Pb^{ij},\Pb^{ik})\in \cPb^{ij}\times\cPb^{ik}$ 
	is a cherry of $\cA_h$, then
	the event~$\cX$ that 
	\[
	\phi_h(\Pb^{ij}) = \phi_h(\Pb^{ik}) 
	\qand 
	\vert N_{\cA_h}(\Pb^{ij},\Pb^{ik})\vert
		< 
	\Big(\frac{1}{3}+\frac{\eps}{8}\Big)\vert\cPb^{jk}\vert
	\]
	has at most the probability $3\mu + \exp\big({-}\tfrac{\eps^2\ell}{128}\big)$.
\end{clm}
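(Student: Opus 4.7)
The plan is to decompose the event $\cX$ according to whether the image cherry $(h(\Pb^{ij}), h(\Pb^{ik}))$ is $\gamma$-bad in $\cA$. I would set $\cX_1 = \{(h(\Pb^{ij}), h(\Pb^{ik})) \in \ccB^{ijk}(\Phi,\Psi,\gamma)\}$ and $\cX_2 = \cX \setminus \cX_1$; the two sub-events will be handled very differently, namely $\cX_1$ by a direct counting bound via~\eqref{fewbad} and $\cX_2$ by concentration via Claim~\ref{clm:1517}.

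For $\cX_1$, the vertices $h(\Pb^{ij})$ and $h(\Pb^{ik})$ are independent and uniformly distributed over $\cP_1^{ij}$ and $\cP_1^{ik}$, and the lower bound $|\cP_1^{ij}| \geq \tfrac{2}{3}|\cP^{ij}|$ from~\eqref{eq:sP1} combined with~\eqref{fewbad} yields
\[
\PP(\cX_1) \leq \frac{|\ccB^{ijk}(\Phi,\Psi,\gamma)|}{|\cP_1^{ij}||\cP_1^{ik}|} \leq \frac{9\mu}{4} < 3\mu\,.
\]

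For $\cX_2$, the cherry $(h(\Pb^{ij}), h(\Pb^{ik}))$ is monochromatic in $\cA_1$ and not $\gamma$-bad in $\cA$, so Claim~\ref{clm:1517} applied inside $\cA_1$ delivers $|N_{\cA_1}(h(\Pb^{ij}), h(\Pb^{ik}))| \geq (\tfrac{1}{3}+\tfrac{\eps}{4})|\cP_1^{jk}|$. I would then condition on the images of $\Pb^{ij}$ and $\Pb^{ik}$ and observe that $|N_{\cA_h}(\Pb^{ij},\Pb^{ik})|$ is the sum, over the $\ell$ elements $\Pb^{jk}\in\cPb^{jk}$, of independent Bernoulli indicators each with success probability at least $\tfrac{1}{3}+\tfrac{\eps}{4}$. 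Azuma's inequality in the form of \cite{randomgraphs}*{Corollary~2.27} (already used in the proof of Lemma~\ref{lem:inhabitedtransversals}) then gives the conditional bound $\exp({-}\eps^2\ell/128)$ on the probability of $|N_{\cA_h}(\Pb^{ij},\Pb^{ik})|$ falling below $(\tfrac{1}{3}+\tfrac{\eps}{8})\ell$, because the required downward deviation is $\tfrac{\eps\ell}{8}$ and the sum of squared Lipschitz constants equals $\ell$. Integrating over the choices of $h(\Pb^{ij}), h(\Pb^{ik})$ and combining with the estimate on $\PP(\cX_1)$ finishes the proof.

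The argument is essentially routine bookkeeping; the only delicate point is identifying the correct conditional independence structure so that Azuma applies cleanly with the constant~$128$, which is immediate from the definition of $\fA(\cA_1,\ell)$ as a uniform product space over the mutually disjoint cells $\cPb^{ij}$.
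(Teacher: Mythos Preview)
Your proposal is correct and follows essentially the same approach as the paper: condition on the images $h(\Pb^{ij}), h(\Pb^{ik})$, bound the contribution from $\gamma$-bad cherries by $(3/2)^2\mu<3\mu$ via~\eqref{fewbad} and~\eqref{eq:sP1}, and for monochromatic non-bad cherries invoke Claim~\ref{clm:1517} together with a concentration inequality to get $\exp(-\eps^2\ell/128)$. The only cosmetic differences are that the paper phrases the decomposition via the law of total probability rather than via sub-events, says ``Chernoff'' rather than ``Azuma'' (same bound here), and explicitly notes the WLOG assumption $i<j<k$ that you use implicitly when writing $\ccB^{ijk}$.
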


\begin{proof}
	Without loss of generality we may assume~$i<j<k$. By the law of total probability we have
		\[
    \mathds P(\cX) 
    =\frac{1}{\vert \cP_1^{ij} \vert \vert \cP_1^{ik}\vert}\sum_{(P^{ij},P^{ik})\in\cP_1^{ij}\times\cP_1^{ik}}
    \mathds P\left(\cX\,\big\vert\,h(\Pb^{ij})=P^{ij} \tand h(\Pb^{ik})=P^{ik}\right)\,.
   \]
   	Note that cherries $(P^{ij},P^{ik})$ consisting of two vertices with different 
	colours contribute zero to this sum. Moreover, due to \eqref{fewbad} the total
	contribution from cherries $(P^{ij},P^{ik})$ belonging 
	to $\ccB^{ijk}(\Phi,\Psi, \gamma)$
	is at most 
	\[
		\frac{\mu |\cP^{ij}||\cP^{ik}|}{|\cP_1^{ij}||\cP_1^{ik}|}
		\overset{\eqref{eq:sP1}}{\le} 
		\left(\frac 32\right)^2\mu
		<
		3\mu\,. 
	\]
	Furthermore, for $P^{ij}$, $P^{ik}$ of the 
	same colour with $(P^{ij},P^{ik})\not\in \ccB^{ijk}(\Phi,\Psi, \gamma)$
	Claim~\ref{clm:1517} combined with Chernoff's inequality tells us
		\[
		\mathds P\left(\cX\,\big\vert\,h(\Pb^{ij})=P^{ij} \tand h(\Pb^{ik})=P^{ik}\right)
		\le 
		\exp\big({-}\tfrac{\eps^2\ell}{128}\big)
	\]
	and Claim~\ref{clm:1526} follows.
\end{proof}

Since $\cA_h$ has~$3\ell^2\binom{t}{3}$ cherries, Claim~\ref{clm:1526} implies
\begin{align*}
    \mathds P\left(\tau_2(\cA_h, \phi_h) < \frac{1}{3}+\frac{\eps}{8}\right) 
    \leq 
    3\l^2\binom{t}{3}\left(3\mu + \exp\big({-}\tfrac{\eps^2\ell}{128}\big)\right)\,.
\end{align*}
Owing to the hierarchy $\mu\ll \ell^{-1} \ll t^{-1}$ this probability is smaller 
than~$1$ and, therefore, there is a map~$h\in\fA(\cA_1,\l)$ for which~$\cA_h$ has the desired properties. 
\end{proof}

\section{Cliques on five vertices in bicoloured reduced hypergraphs}
\label{sec:bicoloured}
In this section we establish Proposition \ref{prop:bicolored} and show that bicoloured reduced hypergraphs with minimum monochromatic codegree density bigger than $1/3$ support a $K_5^{(3)}$.

In the proof we shall use the following types of neighbourhoods in reduced hypergraphs~$\cA$. 
For two vertices $P$, $P'\in V(\cA)$ and a subset $\cU\subseteq V(\cA)$ we denote by $N_{\cU}(P,P')$ the neighbourhood restricted to~$\cU$. Similarly, for two subsets 
$\cU$, $\cU'\subseteq V(\cA)$ we write $N_{\cU\times\cU'}(P)$ for the set of 
pairs in $\cU\times\cU'$ that together with $P$ form a hyperedge in $\cA$, i.e.,
\begin{align*}
	N_{\cU}(P,P') &=\{U\in \cU\colon PP'U\in E(\cA)\}
	\\
	\qand N_{\cU\times\cU'}(P)&=\{(U,U')\in \cU\times\cU'\colon PUU'\in E(\cA)\}\,.
\end{align*}

\begin{proof}[Proof of Proposition \ref{prop:bicolored}]
Clearly we may assume that $\eps<\frac 16$. 
Fix a sufficiently small auxiliary constant $\xi$ with $0<\xi\ll\eps$ 
such that~$\frac{1/6-\eps}{\xi}$ is equal to some positive integer~$s$.
Moreover, let $I$ be a sufficiently large index set such that its cardinality 
satisfies the partition relation $|I|\lra(5)^2_s$, meaning that it is at least 
as large as the $s$-colour Ramsey number for the graph clique $K_5$.  
Let~$\mathcal A$ be a bicoloured reduced hypergraph with index set~$I$ and vertex 
classes $\cP^{ij}$ for $ij\in I^{(2)}$ and let the bicolouring 
$\phi\colon V(\cA)\to\{\tred,\tblue\}$ satisfy~$\tau_2(\mathcal A, \phi)\geq 1/3 + \eps$.

For every $ij\in I^{(2)}$ we set
\[
	\Red^{ij}=\phi^{-1}(\tred)\cap \mathcal P^{ij} 
	\qqand \rho_{ij}=\frac{\vert \Red^{ij}\vert}{\vert \mathcal P^{ij}\vert}
\]
and, analogously, we define $\Blue^{ij}=\phi^{-1}(\tblue)\cap\mathcal P^{ij}$ and 
$\beta_{ij}=\vert \Blue^{ij}\vert/\vert \mathcal P^{ij}\vert$.
In view of~\eqref{eq:nontrivial}, the assumption on $\tau_2(\mathcal A, \phi)$ 
implies that all $\rho_{ij}$, $\beta_{ij}$ are in $[1/3+\eps,2/3-\eps]$.
Splitting this interval into~$s$ intervals of length~$2\xi$, the size of~$I$ yields a subset $J\subseteq I$ of size $5$ such that all~$\beta_{ij}$ with $ij\in J^{(2)}$ are in the same interval.
Let~$\beta$ be the centre of this interval and set~$\rho=1-\beta$. We thus arrive at
\[
    \beta_{ij} = \beta \pm \xi \qqand \rho_{ij} = \rho \pm \xi
\]
for all $ij\in J^{(2)}$.
Without loss of generality we may assume~$\beta \leq \rho$, which implies
\begin{align}\label{eq:rhobeta}
    \frac13 +\eps\leq \beta-\xi < \beta \leq \frac12\leq \rho< \rho+\xi \leq \frac23-\eps\,.
\end{align}

For $ijk\in J^{(3)}$ the codegree condition translates for $\tred$ vertices $R^{ij}\in\fR^{ij}$
and $R^{ik}\in\fR^{ik}$ to
\begin{align}
	|N_{\fB^{jk}}(R^{ij},R^{ik})|
	=
	d(R^{ij},R^{jk})
	&\geq
	\left(\frac{1}{3}+\eps\right)\vert \mathcal P^{jk}\vert\nonumber\\
	&\geq
	\left(\frac{1}{3}+\eps\right)\left(\frac{1}{\beta+\xi}\right)\vert \fB^{jk}\vert
	\geq
	\left(\frac{1}{3\beta}+\frac{\eps}{2}\right)\vert \fB^{jk}\vert\,,\label{eq:codegreeR}
\end{align}
where we used $\xi\ll\eps,\beta$ for the last inequality. Similarly, for $\tblue$ vertices we have
\begin{equation}
	|N_{\fR^{jk}}(B^{ij},B^{ik})| 
	\geq
	\left(\frac{1}{3\rho}+\frac{\eps}{2}\right)\vert \fR^{jk}\vert\,.\label{eq:codegreeB}
\end{equation}

We may rename the indices in $J$ and assume that $J=\ZZ/5\ZZ$.
We shall show that~$\cA$ restricted to $J$ supports a $K_5^{(3)}$. For that we have to find
ten vertices~$P^{ij}\in \mathcal P^{ij}$, one for every~$ij\in J^{(2)}$, such that for all of the ten triples 
$ijk\in J^{(3)}$ the vertices $P^{ij}$, $P^{ik}$, and~$P^{jk}$ span a hyperedge in the constituent~$\mathcal A^{ijk}$. For every $i\in J=\ZZ/5\ZZ$ we will select~$P^{i,i+1}$ from~$\fB^{i,i+1}$ 
and~$P^{i,i+2}$ from~$\fR^{i,i+2}$. 
Since $\cA$ contains no monochromatic triples as hyperedges, this choice for the colour classes is up to a permutation of indices unavoidable, as it corresponds to the unique $2$-colouring of $E(K_5)$ with no monochromatic triangle.

The rest of the proof is based on several averaging arguments relying on the minimum degree condition. 
For generic vertices from $\fR$ and $\fB$ we shall use capital letters~$R$ and $B$. In the process,  
we will make appropriate choices to fix the ten special vertices that induce the
supported $K_5^{(3)}$. For those vertices, we will use small letters $r$ and $b$ depending on their colour.  

We begin with the selection of $r^{14}\in\fR^{14}$.
Applying~\eqref{eq:codegreeB} to all pairs of vertices $B^{15}\in\Blue^{15}$ and $B^{45}\in\Blue^{45}$
implies that the total number of hyperedges in $\cA^{145}$ 
crossing the sets~$\Red^{14}$, $\Blue^{15}$,  and~$\Blue^{45}$ is at least
\[
	\vert\Blue^{15}\vert\vert \Blue^{45}\vert 
		\cdot \left(\frac{1}{3\rho}+\frac{\eps}{2}\right)\vert \Red^{14}\vert\,.
\] 
Consequently, we can fix some vertex~$r^{14}\in \Red^{14}$ such that
\begin{align}\label{eq:G1}
                        |N_{\Blue^{15}\times \Blue^{45}}(r^{14})|
    \geq 
    \left(\frac{1}{3\rho}+\frac{\eps}{2}\right)\vert\Blue^{15}\vert\vert\Blue^{45}\vert\,.
\end{align}
The following claim fixes the four vertices $b^{12}$, $b^{34}$ and $r^{13}$, $r^{24}$. 

\begin{clm}\label{fact}
There are blue vertices~$b^{12}\in\Blue^{12}$, $b^{34}\in\Blue^{34}$ 
and red vertices $r^{13}\in\Red^{13}$,~${r^{24}\in\Red^{24}}$ such that
\begin{enumerate}[label=\rmlabel]
\item\label{it:factedge1} $b^{12}r^{14}r^{24}$ and $r^{13}r^{14}b^{34}$ are hyperedges in $\cA$
\item\label{it:fact}and $\vert N_{\Blue^{23}}(b^{12}, r^{13})\cap N_{\Blue^{23}}(r^{24},b^{34}) \vert \geq \bigl(1-\frac{1}{3\beta}\bigr)\vert \Blue^{23}\vert$.
\end{enumerate}
\end{clm}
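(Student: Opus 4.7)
My plan is to prove Claim~\ref{fact} via a double counting / averaging argument based on the monochromatic codegree bounds~\eqref{eq:codegreeR} and~\eqref{eq:codegreeB}. The first step is to collect the red-red codegree information into two sets of ``good'' pairs
\[
\cX=\{(b^{12}, r^{24})\in\Blue^{12}\times\Red^{24}\colon b^{12}r^{14}r^{24}\in E\}
\qquad\text{and}\qquad
\cY=\{(r^{13}, b^{34})\in\Red^{13}\times\Blue^{34}\colon r^{13}r^{14}b^{34}\in E\}.
\]
Applying~\eqref{eq:codegreeR} to the red pair $(r^{14}, r^{24})$ for each $r^{24}\in\Red^{24}$ and summing yields $|\cX|\ge(\tfrac{1}{3\beta}+\tfrac\eps2)|\Blue^{12}||\Red^{24}|$, and symmetrically $|\cY|\ge(\tfrac{1}{3\beta}+\tfrac\eps2)|\Red^{13}||\Blue^{34}|$. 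In particular, $|\Blue^{12}||\Red^{24}|/|\cX|\le 3\beta$ and $|\Red^{13}||\Blue^{34}|/|\cY|\le 3\beta$ up to lower-order $O(\eps)$ corrections. Every element of $\cX\times\cY$ automatically realises condition~\ref{it:factedge1}, so it remains to find a quadruple in $\cX\times\cY$ that also satisfies~\ref{it:fact}.

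The second step is to lower bound the average of $|N_{\Blue^{23}}(b^{12}, r^{13})\cap N_{\Blue^{23}}(r^{24}, b^{34})|$ over $((b^{12}, r^{24}),(r^{13}, b^{34}))\in\cX\times\cY$. After interchanging the order of summation, this average equals $T/(|\cX||\cY|)$, where
\[
T=\sum_{b^{23}\in\Blue^{23}}\#\bigl\{(b^{12},r^{13},r^{24},b^{34})\colon\,b^{12}r^{14}r^{24},\ r^{13}r^{14}b^{34},\ b^{12}r^{13}b^{23},\ r^{24}b^{34}b^{23}\in E\bigr\}.
\]
For each fixed $b^{23}\in\Blue^{23}$, applying~\eqref{eq:codegreeB} to the blue pairs $(b^{12}, b^{23})$ and $(b^{23}, b^{34})$ shows that the pairs $(b^{12}, r^{13})$ with $b^{12}r^{13}b^{23}\in E$ form at least a $\tfrac{1}{3\rho}+\tfrac\eps2$ fraction of $\Blue^{12}\times\Red^{13}$, and analogously for $(r^{24}, b^{34})$ in $\Red^{24}\times\Blue^{34}$. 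The remaining constraints involving $r^{14}$ are handled via the red-red codegree from Step~1.

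Combining these lower bounds by inclusion-exclusion in each coordinate then yields $T\ge\bigl(1-\tfrac{1}{3\beta}\bigr)|\Blue^{23}|\cdot|\cX||\cY|$, so that~\ref{it:fact} follows by selecting a quadruple achieving at least the average. The main technical obstacle is the careful bookkeeping in this double count: the vertex $r^{14}$ couples $\cX$ with $\cY$ nontrivially, and the target threshold $1-\tfrac{1}{3\beta}$ must emerge from the precise interplay of the red-red density $\tfrac{1}{3\beta}$ on $\cX, \cY$ and the blue-blue density $\tfrac{1}{3\rho}$ on the codegree neighbourhoods, through the identity $\tfrac{1}{3\rho}+\tfrac{1}{3\beta}=\tfrac{1}{3\beta(1-\beta)}$. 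Capturing this constant while preserving enough of the $\eps$-slack to dominate the $\xi$-errors from the colour densities in~\eqref{eq:rhobeta} is the delicate point.
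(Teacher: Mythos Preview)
Your outline has a genuine gap at the step where you claim
\[
T \;\ge\; \Bigl(1-\tfrac{1}{3\beta}\Bigr)\,|\Blue^{23}|\,|\cX|\,|\cY|
\]
by ``inclusion--exclusion in each coordinate''. For a fixed $b^{23}$, the quantity $N(b^{23})$ you are counting is a \emph{four-cycle} homomorphism count: the quadruple $(b^{12},r^{24},b^{34},r^{13})$ must lie simultaneously in four bipartite relations $\cX$, $B_{b^{23}}$, $\cY$, $A_{b^{23}}$ attached to the four sides of the cycle $b^{12}\!-\!r^{24}\!-\!b^{34}\!-\!r^{13}\!-\!b^{12}$. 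Minimum-degree lower bounds on these four relations do not combine into a lower bound of the required strength on the cycle count; in particular, the one-sided degree conditions you have (e.g.\ each $b^{12}$ has many $r^{13}$-neighbours in $A_{b^{23}}$, but not vice versa) leave room for $A_{b^{23}}$ to be concentrated on a subset of $\Red^{13}$ that is badly correlated with the support of $\cY$. No inclusion--exclusion in a single coordinate untangles this, because each constraint couples a different pair of variables. Your remark that the target constant should emerge from the identity $\tfrac{1}{3\rho}+\tfrac{1}{3\beta}=\tfrac{1}{3\rho\beta}$ is correct in spirit, but you have not shown how it actually appears in your count.

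The paper circumvents this difficulty by making \emph{sequential} choices rather than a global average over $\cX\times\cY$. It first fixes $b^{34}$ (by averaging) so that the single set $N_{\Red^{13}}(r^{14},b^{34})$ is large, then fixes $r^{24}$ so that $N_{\Blue^{23}}(r^{24},b^{34})$ is large. Only then does it apply a genuine two-set inclusion--exclusion inside $\Red^{13}$: for every blue pair $(B^{12},B^{23})$ the set $N_{\Red^{13}}(B^{12},B^{23})$ intersects the now \emph{fixed} set $N_{\Red^{13}}(r^{14},b^{34})$ in a $(3\rho-\rho/\beta)$-fraction of the latter. A final averaging over $(b^{12},r^{13})$ then produces the bound $(3\rho-\rho/\beta)\cdot\tfrac{1}{3\rho}=1-\tfrac{1}{3\beta}$. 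The point is that freezing $b^{34}$ and $r^{24}$ first linearises the problem: the crucial intersection bound involves only two subsets of $\Red^{13}$, which is exactly where inclusion--exclusion is legitimate. Your simultaneous averaging keeps all four constraints live at once, and this is precisely what fails to go through.
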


\begin{proof}
Owing to~\eqref{eq:codegreeR} for every~$R^{13}\in \Red^{13}$ we have 
$d(R^{13},r^{14})\geq \big(\frac{1}{3\beta}+\frac{\eps}{2}\big)\vert \Blue^{34}\vert$
and, hence, there is a vertex~$b^{34}\in \Blue^{34}$ such that
\begin{align}\label{eq:13}
    |N_{\Red^{13}}(r^{14}, b^{34})|
    \geq 
    \left(\frac{1}{3\beta}+\frac{\eps}{2}\right)\vert \Red^{13}\vert 
    \geq
    \frac{\rho}{3\beta}\vert \mathcal P^{13}\vert\,.
        \end{align}

Similarly, we can fix a vertex~$r^{24}\in\Red^{24}$ such that
\begin{align}\label{eq:23}
    \vert N_{\Blue^{23}}(r^{24}, b^{34}) \vert
    \geq 
    \frac{1}{3\rho}\vert \Blue^{23}\vert\,. 
\end{align}
Recalling that~$\vert\Red^{13}\vert \leq (\rho+\xi )\vert \cP^{13}\vert$ for every $B^{12}\in \fB^{12}$ and  $B^{23}\in \fB^{23}$ we have \begin{align*}
    \big\vert N_{\Red^{13}}(B^{12},B^{23})\cap N_{\Red^{13}}(r^{14}, b^{34}) \big\vert 
    &\overset{\phantom{\eqref{eq:13}}}{\geq}
    \left(\frac{1}{3}+\eps\right)\vert\mathcal P^{13}\vert
    	+\big\vert N_{\Red^{13}}(r^{14}, b^{34})\big\vert 
		-\vert \Red^{13}\vert  \\
	&\overset{\phantom{\eqref{eq:13}}}{\geq}
    \big\vert N_{\Red^{13}}(r^{14}, b^{34})\big\vert 
    	- \left(\rho+\xi-\frac{1}{3}-\eps\right)\vert\mathcal P^{13}\vert\\
	&\overset{\eqref{eq:13}}{\geq}
	\left(1-3\beta+\frac{\beta}{\rho}\right)\big\vert N_{\Red^{13}}(r^{14}, b^{34})\big\vert \\
    &\overset{\phantom{\eqref{eq:13}}}{\geq}
    \left(3\rho-\frac\rho\beta\right) \big\vert N_{\Red^{13}}(r^{14}, b^{34})\big\vert\,, 
\end{align*}
where the last estimate uses $\beta+\rho=1$ and $\frac\beta\rho+\frac\rho\beta\ge 2$.
Hence, the number of hyperedges crossing $N_{\Blue^{12}}(r^{14},r^{24})$, 
$N_{\Blue^{23}}(r^{24}, b^{34})$, and~$N_{\Red^{13}}(r^{14},b^{34})$ is at least 
\[
	\vert N_{\Blue^{12}}(r^{14},r^{24})\vert\vert N_{\Blue^{23}}(r^{24}, b^{34}) \vert
	\cdot\left(3\rho-\frac\rho\beta\right)
	\vert N_{\Red^{13}}(r^{14},b^{34})\vert\,.
\]
Consequently, there exist $b^{12}\in N_{\Blue^{12}}(r^{14}, r^{24})$
and $r^{13}\in N_{\Red^{13}}(r^{14}, b^{34})$ such that 
\begin{align*}
    \vert N_{\Blue^{23}}(b^{12},r^{13})\cap N_{\Blue^{23}}(r^{24},b^{34}) \vert 
    &\overset{\phantom{\eqref{eq:23}}}{\geq}
    \left(3\rho-\frac\rho\beta\right)
    \vert N_{\Blue^{23}}(r^{24}, b^{34})\vert\\
    &\overset{\eqref{eq:23}}{\geq}
    \left(1-\frac 1{3\beta}\right)\vert \Blue^{23}\vert\,. \qedhere
\end{align*}
\end{proof}

The next claim fixes the four vertices $b^{15}$, $b^{45}$ and $r^{25}$, $r^{35}$. Together with Claim~\ref{fact}
this fixes all vertices except $b^{23}$ and both claims together guarantee those seven hyperedges supporting a~$K_5^{(3)}$
that do not involve $b^{23}$.
\begin{clm}\label{fact2}
There exist blue vertices~$b^{15}\in\Blue^{15}$, $b^{45}\in\Blue^{45}$ and red vertices $r^{25}\in\Red^{25}$,
$r^{35}\in\Red^{35}$ such that $b^{12}b^{15}r^{25}$, $r^{13}b^{15}r^{35}$, $r^{14}b^{15}b^{45}$, $r^{24}r^{25}b^{45}$, and $b^{34}r^{35}b^{45}$ are hyperedges in~$\cA$.
\end{clm}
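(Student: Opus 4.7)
The plan is to run a cascade of averaging arguments analogous to Claim~\ref{fact}, selecting $b^{15}$, $b^{45}$, $r^{25}$, $r^{35}$ in that order via successive applications of the monochromatic codegree bounds~\eqref{eq:codegreeR}--\eqref{eq:codegreeB} and of~\eqref{eq:G1}. Throughout, the key arithmetic input is that $\beta\rho\leq 1/4<1/3$ in the parameter range dictated by~\eqref{eq:rhobeta}; equivalently $\gamma:=1/(3\beta)+1/(3\rho)-1=(1-3\beta\rho)/(3\beta\rho)>0$, which guarantees that any two monochromatic cherry neighbourhoods of the prescribed densities must intersect substantially.

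First, I would fix $b^{15}\in\Blue^{15}$ satisfying two simultaneously useful conditions: $|N_{\Blue^{45}}(r^{14},b^{15})|$ is a substantial fraction of~$|\Blue^{45}|$ (leaving room to pick $b^{45}$), and $|N_{\Red^{35}}(r^{13},b^{15})|$ is a substantial fraction of $|\Red^{35}|$ (leaving room for $r^{35}$). Since a naive union bound on the two Markov-style fractions does not suffice, I would instead use a product-averaging trick: the double sum $\sum_{b^{15}}|N_{\Blue^{45}}(r^{14},b^{15})|\cdot|N_{\Red^{35}}(r^{13},b^{15})|$ equals, after swapping summations, $\sum_{b^{45},r^{35}}|N_{\Blue^{15}}(r^{14},b^{45})\cap N_{\Blue^{15}}(r^{13},r^{35})|$. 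Applying inclusion--exclusion on each intersection and invoking~\eqref{eq:G1} together with the red monochromatic codegree on the cherry $(r^{13},r^{35})$ yields a lower bound of order $(\gamma+O(\eps))|\Blue^{15}||\Blue^{45}||\Red^{35}|$. Dividing and using that each density is at most~$1$, pigeonhole supplies a $b^{15}$ for which both normalised quantities are at least~$\gamma+O(\eps)>0$.

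Next, with such a $b^{15}$ in hand, the cherry $(b^{12},b^{15})$ is blue monochromatic, so $|N_{\Red^{25}}(b^{12},b^{15})|\geq(1/(3\rho)+\eps/2)|\Red^{25}|$ automatically. I would then repeat the same product-averaging idea to select $b^{45}\in N_{\Blue^{45}}(r^{14},b^{15})$ ensuring that the two intersections $N_{\Red^{25}}(b^{12},b^{15})\cap N_{\Red^{25}}(r^{24},b^{45})$ and $N_{\Red^{35}}(r^{13},b^{15})\cap N_{\Red^{35}}(b^{34},b^{45})$ are both nonempty. This step uses the red monochromatic codegree on $(r^{24},r^{25})$ for $r^{25}\in N_{\Red^{25}}(b^{12},b^{15})$ and the blue monochromatic codegree on $(b^{34},b^{45})$, together with the size bounds obtained in the first step. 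Finally, $r^{25}$ and $r^{35}$ are picked from the respective nonempty intersections, and the five required hyperedges are all present.

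The main obstacle is the second stage: enforcing three conditions simultaneously on a single~$b^{45}$ --- membership in $N_{\Blue^{45}}(r^{14},b^{15})$ plus nonemptiness of both red intersections --- cannot be achieved by a naive union bound on Markov-style fractions, and the product-averaging step needs to be carried out with some care to control the dependence on the $b^{15}$ chosen in the first step. The entire scheme hinges on $\gamma>0$ being bounded away from zero uniformly in the parameter range given by~\eqref{eq:rhobeta}, with the $\eps$-margins in~\eqref{eq:codegreeR}, \eqref{eq:codegreeB}, and~\eqref{eq:G1} propagating through the cascade to yield strict positivity at each step.
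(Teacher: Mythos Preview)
Your first step is sound: the product-averaging over $b^{15}$ does yield some $b^{15}$ with both $|N_{\Blue^{45}}(r^{14},b^{15})|/|\Blue^{45}|$ and $|N_{\Red^{35}}(r^{13},b^{15})|/|\Red^{35}|$ at least $\gamma+\eps$, where $\gamma=\tfrac1{3\beta}+\tfrac1{3\rho}-1$. The gap is in the second step, and it is not just a matter of ``care''.

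Once $b^{15}$ is fixed, you need $b^{45}$ in $S=N_{\Blue^{45}}(r^{14},b^{15})$ with two nonemptiness conditions. The $r^{25}$-condition is harmless: picking any $r^{25}\in N_{\Red^{25}}(b^{12},b^{15})$ and using the red cherry $(r^{24},r^{25})$ gives at least $\tfrac1{3\beta}|\Blue^{45}|$ good $b^{45}$'s. But for the $r^{35}$-condition you only know $|N_{\Red^{35}}(r^{13},b^{15})|\ge(\gamma+\eps)|\Red^{35}|$, and for each $b^{45}$ the blue cherry $(b^{34},b^{45})$ gives $|N_{\Red^{35}}(b^{34},b^{45})|\ge(\tfrac1{3\rho}+\tfrac\eps2)|\Red^{35}|$. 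These two densities need not sum to more than~$1$: at $\beta=0.4$, $\rho=0.6$ one has $\gamma\approx 0.39$ and $\tfrac1{3\rho}\approx 0.56$, so the intersection may well be empty for every $b^{45}$ outside some set you cannot control. A product-averaging over $b^{45}$ does not rescue this, because after swapping sums you would need a lower bound on $|N_{\Blue^{45}}(b^{34},r^{35})|$, and $(b^{34},r^{35})$ is a mixed cherry with no codegree guarantee.

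The paper avoids this by \emph{not} decoupling $b^{15}$ and $b^{45}$. It works directly in $\Blue^{15}\times\Blue^{45}$: define $G_1$, $G_2$ as the sets of pairs $(B^{15},B^{45})$ admitting a suitable $r^{25}$, resp.\ $r^{35}$; for each one picks a witness and uses a \emph{red} cherry to get $|G_1|,|G_2|\ge\tfrac1{3\beta}|\Blue^{15}||\Blue^{45}|$. Together with~\eqref{eq:G1} this gives $|G_1|+|G_2|+|N_{\Blue^{15}\times\Blue^{45}}(r^{14})|\ge\bigl(\tfrac2{3\beta}+\tfrac1{3\rho}+\tfrac\eps2\bigr)|\Blue^{15}||\Blue^{45}|>2|\Blue^{15}||\Blue^{45}|$, whence a common pair exists. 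The crucial arithmetic is $\tfrac2{3\beta}+\tfrac1{3\rho}\ge 2$ for $\beta\le\tfrac12\le\rho$, which exploits the asymmetry $\beta\le\rho$: both $G_i$ bounds route through red cherries and pick up the larger factor $\tfrac1{3\beta}$. Your sequential scheme spends one such factor in step~1 and is then left with too little for step~2.
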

\begin{proof}
We consider two sets of ``candidates'' for the pair $(b^{15}, b^{45})$ that are 
relevant for the existence of $r^{25}$ and $r^{35}$. More precisely, we set
\begin{align*}
    G_1&
    =
    \{(B^{15},B^{45})\in \Blue^{15}\times \Blue^{45}
    \colon
    N_{\Red^{25}}(b^{12},B^{15})\cap N_{\Red^{25}}(r^{24}, B^{45}) \neq \emptyset\}\\ 
    \qand
    G_2&
    =
    \{(B^{15},B^{45})\in \Blue^{15}\times \Blue^{45}
    \colon
    N_{\Red^{35}}(r^{13},B^{15})\cap N_{\Red^{35}}(b^{34}, B^{45}) \neq \emptyset\}\,.
\end{align*}
Note that for every~$B^{15}\in \Blue^{15}$ there is some  
$R^{25} \in N_{\Red^{25}}(b^{12}, B^{15})$ and we have
\[
	\vert N_{\Blue^{45}}(r^{24},R^{25})\vert 
	\overset{\eqref{eq:codegreeR}}{\geq} 
	\frac{1}{3\beta}\vert\Blue^{45}\vert\,.
\]
Clearly,~$\{B^{15}\}\times N_{\Blue^{45}}(r^{24},R^{25})\subseteq G_1$ and, hence, we establish 
\begin{align}\label{eq:G23}
    \vert G_1\vert    
    \geq
    \frac{1}{3\beta}\vert\Blue^{15}\vert\vert\Blue^{45}\vert\,.
\end{align}
A symmetric argument yields the same bound for~$G_2$. Combining~\eqref{eq:G23} 
and the same bound for $G_2$ with~\eqref{eq:G1} leads to
\begin{align*}
    \vert G_1\vert +\vert G_2\vert+|N_{\Blue^{15}\times \Blue^{45}}(r^{14})|
    \geq
    \left(\frac{2}{3\beta}+\frac{1}{3\rho}+\frac{\eps}{2}\right)
    \vert\Blue^{15}\vert\vert\Blue^{45}\vert 
    \overset{\eqref{eq:rhobeta}}{>}
    2\,\vert\Blue^{15}\vert\vert\Blue^{45}\vert\,.
\end{align*}
Consequently, we can fix a pair~$(b^{15}, b^{45})\in G_1\cap G_2 \cap N_{\Blue^{15}\times \Blue^{45}}(r^{14})$.
Moreover, having fixed~$b^{15}$ and $b^{45}$ this yields a vertex~$r^{25}\in\fR^{25}$ from the non-empty intersection considered in the definition of $G_1$. Similarly, $G_2$ leads to our choice of $r^{35}\in\fR^{35}$.

Since $(b^{15}, b^{45})\in N_{\Blue^{15}\times \Blue^{45}}(r^{14})$, 
the hyperedge $r^{14}b^{15}b^{45}$ exists in $\cA$ and the other four 
hyperedges result from the definitions of $G_1$ and $G_2$.
\end{proof}

As mentioned above, Claims~\ref{fact} and~\ref{fact2} fix  all vertices
except~$b^{23}\in \Blue^{23}$ and all hyperedges not involving $b^{23}$.
For the three remaining hyperedges it suffices to show that 
\[
	N_{\Blue^{23}}(b^{12}, r^{13})\cap N_{\Blue^{23}}(r^{24},b^{34})\cap N_{\Blue^{23}}(r^{25}, r^{35})
	\neq \emptyset\,.
\]
Claim~\ref{fact}\,\ref{it:fact} and~\eqref{eq:codegreeR} imply 
\begin{multline*}
    \big\vert N_{\Blue^{23}}(b^{12}, r^{13})\cap N_{\Blue^{23}}(r^{24},b^{34})\cap N_{\Blue^{23}}(r^{25}, r^{35})\big\vert \\
    \geq 
    \big\vert N_{\Blue^{23}}(b^{12}, r^{13})\cap N_{\Blue^{23}}(r^{24},b^{34})\big\vert 
    	+ \big\vert N_{\Blue^{23}}(r^{25}, r^{35})\big\vert 
		- \big\vert \fB^{23}\big\vert \\
    \overset{\eqref{eq:codegreeR}}{\geq}
    \left(1-\frac{1}{3\beta}+\frac{1}{3\beta}+\frac{\eps}{2}-1\right) \vert \Blue^{23}\vert 
    > 
    0\,.
\end{multline*}
Hence a suitable choice for $b^{23}$ exists and, therefore,~$\cA$ restricted to $J$ supports a $K_5^{(3)}$.
\end{proof}

\section{Concluding Remarks}
We close with a few related open problems and possible directions for future research.

\subsection{Tur\'an problems for cliques in 
	\texorpdfstring{$\robustee$}{cherry}-dense hypergraphs}	
In view of Theorems~\ref{thm:K2r} and~\ref{thm:main} for cliques $K_\l^{(3)}$ with 
$\l\leq 16$ vertices only the cases $\l=9$ and $10$ are still unresolved and closing the bounds
\[
	\frac{1}{2}
	\leq 
	\pi_{\ee}(K_9^{(3)})
	\leq 
	\pi_{\ee}(K_{10}^{(3)})
	\leq 
	\frac{2}{3}
\]
would be interesting. We believe that by combining our main result with the 
ideas in~\cite{cherry} one can derive 
an improved upper bound $\pi_{\ee}(K_{10}^{(3)})\le \frac35$. More generally, 
it seems  that $\pi_{\ee}(K_{r}^{(3)})=\alpha$ 
implies $\pi_{\ee}(K_{2r}^{(3)})\le\frac 1{2-\alpha}$ and we shall return to this 
topic in the near future. 

Determining the value $\pi_{\ee}(K_\l^{(3)})$ for large values of $\l$ 
might be a challenging problem and one may first focus on the asymptotic behaviour.
For every $\l\geq 3$ Theorem~\ref{thm:K2r} tells us 
\begin{equation}\label{eq:asymp-ub}
	\pi_{\ee}(K_\l^{(3)})
	\leq
	1-\frac{1}{\log_2(\l)}\,.
\end{equation}
For a lower bound, we consider the following well-known random construction.

\begin{exmp}\label{ex:Ramsey}\sl 
For $r\geq 2$ we consider random hypergraphs $H_{\phi}=(V,E_{\phi})$ 
with the edge set defined by the non-monochromatic triangles of a random $r$-colouring 
$\phi\colon V^{(2)}\to [r]$ for a sufficiently large vertex set $V$. 
It is easy to check that for any fixed $\eta>0$ with high probability 
such hypergraphs $H_{\phi}$ are $(\eta,\frac{r-1}{r},\ee)$-dense. On the other hand, 
if $\l$ is at least as large as $R(3;r)$, the $r$-colour Ramsey number for graph triangles, 
then every such $H_{\phi}$ is~$K_{\l}^{(3)}$-free. 
\end{exmp}

Consequently, Example~\ref{ex:Ramsey} yields
\[
	\pi_{\ee}(K_\l^{(3)})
	\geq
	1-\frac{1}{r}\,,\ \text{whenever}\ \l\geq R(3;r)
\]
and using the simple upper bound $R(3;r)\leq 3\,r!$ 
we arrive at 
\begin{equation}\label{eq:asymp-lb}
	\pi_{\ee}(K_\l^{(3)})
	\geq
	1-\frac{\log_2\log_2(\l)}{\log_2(\l)}
\end{equation}
for sufficiently large $\l$. Comparing the bounds in~\eqref{eq:asymp-ub} and~\eqref{eq:asymp-lb}
leads to the following problem.
\begin{prob}\label{prob:asymp}
	Determine the  asymptotic behaviour of $1-\pi_{\ee}(K_\l^{(3)})$.
\end{prob}
Obtaining sharp bounds for $R(3;r)$ is a well known open problem in Ramsey theory. 
The aforementioned upper bound on $R(3;r)$ together with the work of Schur~\cite{Sch16}
yields 
\[
	2^{\Omega(r)}
	\leq
	R(3;r)
	\leq
	2^{O(r\log r)}\,.
\]
The intriguing problem of deciding whether $\displaystyle{\lim_{r\to\infty}}R(3;r)^{1/r}$ is finite or not is attributed to Erd\H os (see, e.g.,~\cite{ChGr98}*{Section~2.5}). In case this limit would turn out to be finite, then this would yield 
a sharper estimate for~\eqref{eq:asymp-lb}. In fact, this would close the gap to~\eqref{eq:asymp-ub} and 
the resolution of Problem~\ref{prob:asymp} would be $1-\pi_{\ee}(K_\l^{(3)})=\Theta(1/\log(\l))$.

\subsection{Tur\'an problems for hypergraphs with uniformly dense links}
As discussed in the introduction there is a small difference between Theorem~\ref{thm:main}
and Corollary~\ref{cor:main}. Below we briefly elaborate on these differences.

In this work we study $\ee$-dense hypergraphs, which are defined by the lower bound 
condition~\eqref{eq:eedef} in Definition~\ref{def:ee}. Requiring in addition a matching upper bound, 
i.e., replacing~\eqref{eq:eedef} by 
\[
	\big|e_{\ee}(P,Q)-d\,|\cK_{\ee}(P,Q)|\big|
	\leq
	\eta |V|^3\,,
\]
leads to the notion of \emph{$(\eta,d,\ee)$-quasirandom} hypergraphs. Clearly, 
we can transfer the definition of $\pi_{\ee}(F)$ in \eqref{eq:pieedef}
and define the Tur\'an-density $\pi'_{\ee}(F)$ by restricting to $\ee$-quasirandom 
hypergraphs~$H$
\begin{multline*}
	\pi'_{\ee}(F) 
	= 
	\sup\{d\in[0,1] \colon \text{for every $\eta>0$ and $n\in\NN$ there exists an $F$-free,}\\
		\text{$(\eta, d, \ee)$-quasirandom hypergraph with at least $n$ vertices}\}\,.
\end{multline*}
By definition we have $\pi'_{\ee}(F)\leq \pi_{\ee}(F)$
for every hypergraph $F$ and one may wonder if this inequality is sometimes strict.

For $K_5^{(3)}$ it is easy to check that the lower bound construction in Example~\ref{ex:K5}
yields $K_5^{(3)}$-free $(\eta,1/3,\ee)$-quasirandom hypergraphs for every fixed $\eta>0$ and, hence,
\[
	\pi'_{\ee}(K^{(3)}_5)
	=
	\pi_{\ee}(K^{(3)}_5)=\frac{1}{3}\,.
\]
On the other hand, the lower bound construction for $K^{(3)}_6$ from~\cite{cherry}
is given by Example~\ref{ex:Ramsey} for $r=2$.
In those hypergraphs $H_{\phi}$ we can take $P$ and $Q$ to be the pairs in colour $1$ and $2$ respectively and get 
\[
	e_{\ee}(P,Q)=|\cK_{\ee}(P,Q)|\,,
\]
i.e., they have relative density~$1$.
Therefore, the hypergraphs $H_\phi$ are only $(\eta,1/2,\ee)$-dense, but not $(\eta,1/2,\ee)$-quasirandom.
In fact, we are not aware of any matching quasirandom lower bound construction for 
$\piee(K^{(3)}_6)$ and it seems possible that $\pi'_{\ee}(K^{(3)}_6)$ is strictly smaller 
than $\pi_{\ee}(K^{(3)}_6)$ suggesting the following general problem.\footnote{We remark that for the concepts of $\vvv$-dense/quasirandom hypergraphs 
there is no difference for the corresponding Tur\'an-densities, as every $\vvv$-dense 
hypergraph contains large $\vvv$-quasirandom hypergraphs of at least the same density.}
\begin{prob}
	Which hypergraphs $F$ satisfy $\pi'_{\ee}(F)<\pi_{\ee}(F)$?
\end{prob}

\subsection*{Acknowledgement} We are indebted to the referees for their careful work and 
efforts towards improving our presentation.

\begin{bibdiv}
\begin{biblist}

\bib{ACHPS18}{article}{
   author={Aigner-Horev, Elad},
   author={Conlon, David},
   author={H\`an, H.},
   author={Person, Yury},
   author={Schacht, Mathias},
   title={Quasirandomness in hypergraphs},
   journal={Electron. J. Combin.},
   volume={25},
   date={2018},
   number={3},
   pages={Paper No. 3.34, 22},
   review={\MR{3853886}},
   doi={10.37236/7537},
}

\bib{BCKMM}{article}{
   author={Buci\'{c}, M.},
   author={Cooper, J. W.},
   author={Kr\'{a}\v{l}, D.},
   author={Mohr, S.},
   author={Munh\'{a} Correia, D.},
   title={Uniform Tur\'{a}n density of cycles},
   journal={Trans. Amer. Math. Soc.},
   volume={376},
   date={2023},
   number={7},
   pages={4765--4809},
   issn={0002-9947},
   review={\MR{4608432}},
   doi={10.1090/tran/8873},
}

\bib{ChGr98}{book}{
   author={Chung, Fan},
   author={Graham, Ron},
   title={Erd\H{o}s on graphs},
   note={His legacy of unsolved problems},
   publisher={A K Peters, Ltd., Wellesley, MA},
   date={1998},
   pages={xiv+142},
   isbn={1-56881-079-2},
   isbn={1-56881-111-X},
   review={\MR{1601954}},
}
  
\bib{Er64}{article}{
   author={Erd{\H{o}}s, P.},
   title={On extremal problems of graphs and generalized graphs},
   journal={Israel Journal of Mathematics},
   volume={2},
   date={1964},
   pages={183--190},
   issn={1565-8511},
   doi={10.1007/BF02759942},
}  
  
\bib{ErSo82}{article}{
  author={Erd{\H{o}}s, P.},
  author={S{\'o}s, Vera T.},
  title={On Ramsey-Tur\'an type theorems for hypergraphs},
  journal={Combinatorica},
  volume={2},
  date={1982},
  number={3},
  pages={289--295},
  issn={0209-9683},
  review={\MR{698654 (85d:05185)}},
  doi={10.1007/BF02579235},
}

\bib{GKL}{article}{
	author={Garbe, F.},
	author={Kr\'{a}l', D.},
	author={Lamaison, A.},
	title={Hypergraphs with minimum positive uniform Tur\'an density},
	journal={Israel J. Math., to appear},
eprint={2105.09883},
}

\bib{GKV16}{article}{
   author={Glebov, Roman},
   author={Kr\'{a}l', Daniel},
   author={Volec, Jan},
   title={A problem of Erd\H{o}s and S\'{o}s on 3-graphs},
   journal={Israel J. Math.},
   volume={211},
   date={2016},
   number={1},
   pages={349--366},
   issn={0021-2172},
   review={\MR{3474967}},
   doi={10.1007/s11856-015-1267-4},
}

\bib{Ramseybook}{book}{
   author={Graham, Ronald L.},
   author={Rothschild, Bruce L.},
   author={Spencer, Joel H.},
   title={Ramsey theory},
   series={Wiley-Interscience Series in Discrete Mathematics and
   Optimization},
   edition={2},
   note={A Wiley-Interscience Publication},
   publisher={John Wiley \& Sons, Inc., New York},
   date={1990},
   pages={xii+196},
   isbn={0-471-50046-1},
   review={\MR{1044995}},
}

\bib{randomgraphs}{book}{
   author={Janson, Svante},
   author={\L uczak, Tomasz},
   author={Ruci\'nski, Andrzej},
   title={Random graphs},
   series={Wiley-Interscience Series in Discrete Mathematics and
   Optimization},
   publisher={Wiley-Interscience, New York},
   date={2000},
   pages={xii+333},
   isbn={0-471-17541-2},
   review={\MR{1782847}},
   doi={10.1002/9781118032718},
}

\bib{MR06}{article}{
   author={Mubayi, Dhruv},
   author={R\"{o}dl, Vojt\v{e}ch},
   title={Supersaturation for Ramsey-Tur\'{a}n problems},
   journal={Combinatorica},
   volume={26},
   date={2006},
   number={3},
   pages={315--332},
   issn={0209-9683},
   review={\MR{2246151}},
   doi={10.1007/s00493-006-0018-x},
}

\bib{Christiansurvey}{article}{
	author={Reiher, Chr.},
	title={Extremal problems in uniformly dense hypergraphs},
    journal={European J. Combin.},
    volume={88},
    date={2020},
    pages={103117, 22},
    issn={0195-6698},
    review={\MR{4111729}},
    doi={10.1016/j.ejc.2020.103117},
}

\bib{RRS-b}{article}{
  author={Reiher, Chr.},
  author={R{\"o}dl, Vojt\v{e}ch},
  author={Schacht, Mathias},
  title={Embedding tetrahedra into quasirandom hypergraphs},
  journal={J. Combin. Theory Ser. B},
  volume={121},
  date={2016},
  pages={229--247},
  issn={0095-8956},
  review={\MR{3548293}},
  doi={10.1016/j.jctb.2016.06.008},
}

\bib{RRS-a}{article}{
	author={Reiher, Chr.}, 
	author={R{\"o}dl, Vojt\v{e}ch},
	author={Schacht, Mathias},
	title={On a Tur\'{a}n problem in weakly quasirandom 3-uniform hypergraphs},
    journal={J. Eur. Math. Soc. (JEMS)},
    volume={20},
    date={2018},
    number={5},
    pages={1139--1159},
    issn={1435-9855},
    review={\MR{3790065}},
    doi={10.4171/JEMS/784},
}

\bib{nullpaper}{article}{
   author={Reiher, Chr.},
   author={R{\"o}dl, Vojt\v{e}ch},
   author={Schacht, Mathias},
   title={Hypergraphs with vanishing Tur\'{a}n density in uniformly dense
   hypergraphs},
   journal={J. Lond. Math. Soc. (2)},
   volume={97},
   date={2018},
   number={1},
   pages={77--97},
   issn={0024-6107},
   review={\MR{3764068}},
   doi={10.1112/jlms.12095},
}

\bib{cherry}{article}{
   author={Reiher, Chr.},
   author={R{\"o}dl, Vojt\v{e}ch},
   author={Schacht, Mathias},
   title={Some remarks on $\piee$},
   conference={
      title={Connections in discrete mathematics},
   },
   book={
      publisher={Cambridge Univ. Press, Cambridge},
   },
   date={2018},
   pages={214--239},
   review={\MR{3821841}},
}

\bib{RRS-Mantel}{article}{
   author={Reiher, Chr.},
   author={R{\"o}dl, Vojt\v{e}ch},
   author={Schacht, Mathias},
   title={On a generalisation of Mantel's theorem to uniformly dense
   hypergraphs},
   journal={Int. Math. Res. Not. IMRN},
   date={2018},
   number={16},
   pages={4899--4941},
   issn={1073-7928},
   review={\MR{3848224}},
   doi={10.1093/imrn/rnx017},
}
		
\bib{Sch16}{article}{
	author={Schur, I.},
	journal={Jahresbericht der Deutschen Mathematiker-Vereinigung},
	pages={114--116},
	title={\"Uber die Kongruenz $x^m+y^m\equiv z^m \pmod p$},
	volume={25},
	number={4-6},
	year={1916},
}

\bib{Towsner}{article}{
   author={Towsner, Henry},
   title={$\sigma$-algebras for quasirandom hypergraphs},
   journal={Random Structures Algorithms},
   volume={50},
   date={2017},
   number={1},
   pages={114--139},
   issn={1042-9832},
   review={\MR{3583029}},
   doi={10.1002/rsa.20641},
}

\end{biblist}
\end{bibdiv}

\end{document}